\tikzset{snake it/.style={decorate, decoration=snake}}
\def\longbox#1{\parbox{0.85\textwidth}{#1}}
\newcommand{\leqnomode}{\tagsleft@true}
\newcommand{\reqnomode}{\tagsleft@false}
\def\cqedsymbol{\ifmmode$\lrcorner$\else{\unskip\nobreak\hfil
\penalty50\hskip1em\smallOll\nobreak\hfil$\lrcorner$
\parfillskip=0pt\finalhyphendemerits=0\endgraf}\fi} \newcommand\smallO{
  \mathchoice
    {{\scriptstyle\mathcal{O}}}
    {{\scriptstyle\mathcal{O}}}
    {{\scriptscriptstyle\mathcal{O}}}
    {\scalebox{.7}{$\scriptscriptstyle\mathcal{O}$}}
  }
\newtheorem{lemma}{Lemma}[section]
\newtheorem{theorem}[lemma]{Theorem}
\theoremstyle{definition}
\def\dd{\hbox{-}}
\renewcommand{\S}{\mathcal{S}}
\newcommand{\R}{\mathcal{R}}
\newcommand{\Pre}{\text{Pre}}
\def\dd{\hbox{-}}
\newcommand*\samethanks[1][\value{footnote}]{\footnotemark[#1]}
\title{Submodular functions and perfect graphs}
\author{
Tara Abrishami$^{1}$\thanks{Supported by NSF Grant DMS-1763817 and NSF-EPSRC Grant DMS-2120644} \ \ \ \
Maria Chudnovsky$^{1}$\samethanks \ \ \ \
Cemil Dibek$^{1}$\thanks{Supported by NSF Grant DMS-1763817.} \ \ \ \
Kristina Vu\v{s}kovi\'c$^{2}$\thanks{Supported by DMS-EPSRC grant EP/V002813/1.}\\\\
 $^1${\small Princeton University, Princeton, NJ, USA}\\
\small $^2$School of Computing, University of Leeds, UK
\\
\\
}
\date{\today}
\begin{document}\maketitle
\vspace{-.3cm}
\begin{abstract}
  We give a combinatorial polynomial-time algorithm to find a maximum weight independent set in perfect graphs of bounded degree that do not contain a prism or a hole of length four as an induced subgraph. An {\em even pair} in a graph is a pair of vertices all induced paths between which are even. An {\em even set} is a set of vertices every two of which are an even pair. We show that every perfect graph that does not  contain a prism or a hole of length four as an induced subgraph has a balanced separator which is the union of a bounded number of even sets, where the bound depends only on the maximum degree of the graph. This allows us to solve the maximum weight independent set problem using the well-known submodular function minimization algorithm.

\end{abstract}

\section{Introduction}\label{sec:intro}
All graphs in this paper are finite and simple. For two graphs $G$ and $H$, we say that $G$ \emph{contains} $H$ if some induced subgraph of $G$ is isomorphic to $H$. A graph $G$ is \emph{$H$-free} if it does not contain $H$, and when $\mathcal{H}$ is a set of graphs, we say $G$ is $\mathcal{H}$-free if it is $H$-free for all $H$ in $\mathcal{H}$.

A \emph{clique} in a graph is a set of pairwise adjacent vertices, and an \emph{independent set} is a set of pairwise non-adjacent vertices. The \emph{chromatic number} of a graph $G$ is the smallest number of independent sets of $G$ with union $V(G)$. A graph $G$ is \emph{perfect} if every induced subgraph $H$ of $G$ satisfies $\chi(H) = \omega(H)$, where $\chi(H)$ is the chromatic number of $H$ and $\omega(H)$ is the size of a maximum clique in $H$. For an integer $k \geq 4$, a {\em hole of length $k$} in a graph is an induced subgraph isomorphic to the $k$-vertex cycle $C_k$. An \emph{antihole} is the complement of a hole. A hole or antihole is {\em odd} if its length is odd, and {\em even} if its length is even. A graph is \emph{Berge} if it does not contain an odd hole or an odd antihole. Claude Berge introduced the class of perfect graphs and conjectured that a graph is perfect if and only if it is Berge. This conjecture (now the Strong Perfect Graph Theorem) was proved by Chudnovsky, Robertson, Seymour and Thomas \cite{SPGT}.

Given a graph with non-negative weights on its vertices, \textsc{Maximum Weight Independent Set} (MWIS) is the problem of finding an independent set of maximum total weight. It is known that MWIS can be solved in polynomial time in perfect graphs due to the algorithm of Gr\"{o}tschel, Lov\'asz and Schrijver \cite{Grotschel}. This algorithm, however, uses the ellipsoid method to solve semidefinite programs. Although there is no standard definition of {\em combinatorial} algorithm, most graph theorists agree that algorithms that rely on graph searches and decompositions and that can be described as a sequence of operations applied directly to the vertices and edges of the graph can be called combinatorial. Under this ``definition'' the algorithm of \cite{Grotschel} is not considered to be combinatorial. Currently, no combinatorial polynomial-time algorithm is known to solve MWIS in perfect graphs, and consequently there has been interest in studying MWIS in restricted subclasses of perfect graphs. Although in some restricted subclasses of perfect graphs, MWIS can be formulated as a linear program of polynomial size and different approaches can then be used to solve it, translating these methods into a ``combinatorial algorithm'' is still out of reach, so even finding a combinatorial polynomial-time algorithm for MWIS in subclasses of perfect graphs is an open and interesting problem. In this paper, we prove one such result. A \emph{prism} is a graph consisting of two vertex-disjoint triangles $\{a_1, a_2, a_3\}$, $\{b_1, b_2, b_3\}$, and three vertex-disjoint paths $P_1, P_2, P_3$, where each $P_i$ has endpoints $a_i, b_i$, and for $1 \leq i < j \leq 3$ the only edges between $V(P_i)$ and $V(P_j)$ are $a_ia_j$ and $b_ib_j$. In this paper, we obtain a combinatorial polynomial-time algorithm solving MWIS in perfect graphs of bounded degree that do not contain a prism or a hole of length four. Our approach uses two general tools, even set separators and iterated decompositions, which can be applied to  different graph classes and which therefore add to the available methods to solve MWIS. Developing the theory of even set separators and of iterated decompositions are major contributions of this paper, aside from the application to MWIS in a subclass of perfect graphs. Next, we describe how these two tools work and why they are useful.

Even set separators are related to a well-known graph structure called balanced separators. A balanced separator in a graph is a set of vertices that, when deleted, breaks the graph into small components. When a balanced separator is of constant size, algorithmic problems can be solved in the graph in polynomial time using recursion. For instance, in the case of MWIS, one can guess the intersection of a maximum independent set with the balanced separator in constant time when the size of the balanced separator is constant, and then compute MWIS in the components recursively since the components are small. Even set separators similarly have the property that, when deleted, the remaining components of the graph are small. However, instead of relying on constant size to make MWIS solvable in polynomial time, even set separators allow  us to reduce an instance of MWIS to several instances of submodular function minimization, a problem known to be solvable in polynomial time \cite{IFF, McCormick, Schrijver, O-Sbm}.
  
More formally, let $G$ be a graph and let $w: V(G) \to [0, 1]$ be a weight function defined on the vertices of $G$. For $X \subseteq V(G)$, let $w(X) = \sum_{x \in X} w(x)$, and denote $w(V(G))$ by $w(G)$. We let $w^{\max}$ be the maximum weight of a vertex, i.e. $w^{\max} = \max_{v \in V(G)} w(v)$. A weight function $w$ on $V(G)$ is called a {\em uniform weight function} if there exists $X \subseteq V(G)$ such that $w(v) = \frac{1}{|X|}$ if $v \in X$ and $w(v) = 0$ if $v \not \in X$. Let $c \in [\frac{1}{2}, 1)$. A set $X \subseteq V(G)$ is a {\em $(w, c)$-balanced separator} if every connected component $D$ of $G \setminus X$ satisfies $w(D) \leq c$. Balanced separators of ``small'' size are useful because they allow many algorithmic problems to be solved in polynomial time using recursion. For example, in graphs that have a small balanced separator for every uniform weight function, \textsc{MWIS} is solvable in polynomial time.

\begin{lemma}[\cite{HW}]
  \label{lemma:bounded_balanced_gives_IS}
  There is a function $f: \mathbb{N} \times \mathbb{R} \rightarrow \mathbb{N}$
  with the following property.
Let $G$ be a graph with $|V(G)| = n$. Let $c \in [\frac{1}{2}, 1)$, let $k$ be a nonnegative integer, and suppose $G$ has a $(w, c)$-balanced separator of size at most $k$ for every uniform weight function $w$. Then, MWIS can be solved in $G$ in time at most $n^{f(k,c)}$. 
\end{lemma}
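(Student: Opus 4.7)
The plan is a recursive divide-and-conquer algorithm whose correctness rests on guessing the intersection of an optimal independent set with a small balanced separator. Applying the hypothesis to the uniform weight function $u$ on $V(G)$ (so $u(v) = 1/n$ for every vertex), I obtain a set $X \subseteq V(G)$ with $|X| \le k$ such that every component of $G \setminus X$ has at most $cn$ vertices. Such an $X$ can be found by brute force, enumerating the $O(n^k)$ subsets of size at most $k$ and checking the balancedness condition in polynomial time.

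For the recursive step I enumerate every independent subset $Y \subseteq X$ (at most $2^k$ of them). For each such $Y$, the components $D_1, \dots, D_m$ of $G \setminus (X \cup N(Y))$ are induced subgraphs of $G$, each contained in some component of $G \setminus X$, and hence of size at most $cn$, with $\sum_i |D_i| \le n$. I would recursively compute $\mathrm{MWIS}(D_i, w|_{D_i})$ and return $\max_Y \bigl[w(Y) + \sum_i \mathrm{MWIS}(D_i, w|_{D_i})\bigr]$. Correctness is immediate: for an optimal $I^*$, taking $Y = I^* \cap X$ places $I^* \setminus X$ entirely in $V(G) \setminus (X \cup N(Y))$, distributed independently across the $D_i$.

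Before iterating I must verify that the balanced-separator hypothesis is hereditary, so that each recursive subproblem inherits it: every induced subgraph $H$ of $G$ should also admit a $(w, c)$-balanced separator of size at most $k$ for every uniform weight function on $V(H)$. Given a uniform weight function on $V(H)$, extend it by zeros to a uniform weight function on $V(G)$, apply the hypothesis in $G$ to obtain a separator $X' \subseteq V(G)$ with $|X'| \le k$, and take $X' \cap V(H)$ as the separator of $H$. Any path in $H$ avoiding $X' \cap V(H)$ is a path in $G$ avoiding $X'$, so every component of $H \setminus (X' \cap V(H))$ sits inside a component of $G \setminus X'$ and inherits the same weight bound $c$.

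The main obstacle is the complexity analysis. The recurrence is
\[
T(n) \;\le\; O(n^{k+1}) + 2^k \sum_i T(|D_i|), \qquad |D_i| \le cn, \ \sum_i |D_i| \le n.
\]
I would verify $T(n) \le A n^a$ inductively, using the convexity estimate $\sum_i |D_i|^a \le (\max_i |D_i|)^{a-1}\sum_i |D_i| \le c^{a-1} n^a$. The induction closes once $2^k c^{a-1} \le 1/2$, which is achievable for $a$ of order $k/\log(1/c)$; any such $a$ can serve as $f(k, c)$. The subtle point is that the vertex-mass bound $\sum_i |D_i| \le n$ is essential: without it the branching factor across components at a single recursion level would blow up, and the total running time would be only quasi-polynomial despite the $O(\log n)$ recursion depth.
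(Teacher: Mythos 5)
Your proof is correct. Note that the paper does not prove Lemma \ref{lemma:bounded_balanced_gives_IS} at all --- it is cited from \cite{HW} as a known fact --- so there is no in-paper argument to compare against; yours is the standard divide-and-conquer proof and it is complete. You correctly handle the three points where such arguments usually go wrong: the hereditary nature of the separator hypothesis under induced subgraphs (extending a uniform weight function by zeros and intersecting the resulting separator with $V(H)$), the correctness of guessing $Y = I^* \cap X$ and deleting $N(Y)$ before recursing, and the complexity recurrence, where the bound $\sum_i |D_i|^a \le (\max_i |D_i|)^{a-1}\sum_i |D_i| \le c^{a-1}n^a$ closes the induction once $2^k c^{a-1} \le \tfrac12$. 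It is worth observing that this last step is essentially the same convexity computation the paper carries out explicitly (via extreme points of the feasible polytope) in step (1.2.2) of the proof of Theorem \ref{thm:evensetseparator}, which is the even-set analogue of this lemma with submodular minimization replacing the $2^k$ brute-force enumeration over subsets of the separator; your one-line bound is a more elementary route to the same estimate.
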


A {\em path} in $G$ is an induced subgraph isomorphic to a graph $P$ with vertices $p_0, p_1, \dots, p_k$ and with $E(P) = \{p_ip_{i+1} : i \in \{0,\dots, k-1\}\}$. We write $P = p_0 \dd p_1 \dd \dots \dd p_k$ to denote a path with vertices $p_0, p_1, \dots, p_k$ in order. We say that $P$ is a path from $p_0$ to $p_k$. The \emph{length} of a path $P$ is the number of edges in $P$. A path is {\em odd} if its length is odd, and {\em even} otherwise. For a path $P$ with ends $a, b$, the \emph{interior} of $P$, denoted $P^*$, is the set $V(P) \setminus \{a, b\}$.

The {\em distance} between two vertices $x, y \in V(G)$ is the length of the shortest path from $x$ to $y$ in $G$. The distance between a vertex $v \in V(G)$ and a set $X \subseteq V(G)$ is the length of the shortest path with one end $v$ and the other end in $X$. We denote by $N^d[v]$ the set of all vertices of distance at most $d$ from $v$ in $G$. Similarly, we denote by $N^d[X]$ the set of all vertices of distance at most $d$ from $X$ in $G$.

Since we will be focusing on graphs with bounded maximum degree, we use an alternative definition of bounded.  We say that a set $X \subseteq V(G)$ is {\em $d$-bounded} if $X \subseteq N^d[v]$ for some $v \in V(G)$. A set $X \subseteq V(G)$ is a {\em $d$-bounded $(w, c)$-balanced separator} of $G$ if $X$ is $d$-bounded and $X$ is a $(w, c)$-balanced separator of $G$. Note that if $G$ has maximum degree $\delta$ and if $X$ is $d$-bounded, then $|X| \leq 1 + \delta + \hdots + \delta^d$.

In this paper, we define a new type of separator called even set separators. An \emph{even pair} in $G$ is a pair of vertices $\{x, y\}$ such that every induced path in $G$ from $x$ to $y$ is even, and in particular, $x$ and $y$ are non-adjacent. A set $X \subseteq V(G)$ is an {\em even set} if every pair of its vertices is even. Note that all even sets are independent sets. Let $X_1,\hdots, X_\ell$ be pairwise disjoint vertex subsets of $G$. We say that $(X_1, \hdots, X_\ell)$ is an {\em $\ell$-iterated even set in $G$} if $X_i$ is an even set in $G \setminus (\bigcup_{j < i} X_j)$ for all $1 \leq i \leq \ell$.

Let $k$ and $d$ be positive integers and let $c \in [\frac{1}{2}, 1)$. We say that $X = (X_1, \hdots, X_k)$ is a {\em $(w, k, c, d)$-even set separator of $G$} if $X$ is a $k$-iterated even set in $G$ and every connected component $D$ of $G \setminus X$ satisfies $|N(D)| \leq d$ and $w(D) \leq c$ (here, we use $G \setminus X$ to mean $G \setminus (\bigcup_{i=1}^k X_i))$. Note that a $(w, c)$-balanced separator $X=\{x_1, \hdots, x_k\}$ of size at most $k \leq d$ is a $(w, k, c, d)$-even set separator, since $(\{x_1\}, \hdots, \{x_k\})$ is an iterated even set. A graph $G$ is called {\em $(k, c, d, m)$-tame} if $G$ has a $(w, k, c, d)$-even set separator for every uniform weight function $w$, and such a separator can be constructed in $\mathcal{O}(|V(G)|^m)$ time.

In Section \ref{sec:even_set_separators}, we prove the following theorem.
\begin{restatable}{theorem}{evensetseparator}
\label{thm:evensetseparator}
Let $k,d,m$ be integers, and let $c \in (\frac{1}{2},1)$. Let $z$ be the minimum integer such that $c^{\frac{z-1}{d+1}} \leq \frac{1}{2}$. 
Then, there is a combinatorial algorithm that, given a $(k,c,d,m)$-tame graph $G$, solves MWIS in $G$ in ${\cal O}(|V(G)|^y)$-time,
where $y=\max(m, z, 5k+1)$.
\end{restatable}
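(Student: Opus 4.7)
I would adapt the standard approach underlying Lemma \ref{lemma:bounded_balanced_gives_IS}, using combinatorial submodular function minimization (SFM, solvable in $O(n^5)$) to handle the fact that the even set separator produced by tameness may not have constant size. Given $G$ with weights $w$, I first apply the heavy-vertex trick: iterate over $v \in V(G)$ as a guess for the vertex $v^*$ of maximum weight in the optimum independent set $S^*$. For each $v$, set $U_v = \{u : w(u) \geq w(v)\}$ and let $w_v$ be the uniform weight function on $U_v$; any $(w_v, c)$-balanced separator is then balanced for $S^* \cap U_v$. By $(k,c,d,m)$-tameness, a $(w_v, k, c, d)$-even set separator $X^* = (X_1, \ldots, X_k)$ of $G$ can be constructed in $O(|V(G)|^m)$ time, and the components $D_1, \ldots, D_p$ of $G \setminus X^*$ satisfy $|N(D_j) \cap X^*| \leq d$ and $|D_j \cap U_v| \leq c |U_v|$, enabling recursion on each $D_j$.

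\textbf{Combining via SFM.} For each $D_j$ and each $A \subseteq N(D_j) \cap X^*$ (at most $2^d = O(1)$ options), let $g_j(A)$ be the MWIS in $G[D_j]$ avoiding $N(A)$, computed by a recursive call with $w$ restricted to $D_j \setminus N(A)$; crucially, the recursive call is still on $G$, so tameness remains available and the support of the uniform weight function shrinks by factor $c$. Given the $g_j$ values, the global optimum equals
\[
  \max_{\substack{T \subseteq X^* \\ T \text{ independent}}} \Bigl( w(T) + \sum_{j=1}^{p} g_j(T \cap N(D_j)) \Bigr).
\]
Because $|X^*|$ need not be constant, I cannot enumerate $T$ directly. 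Instead, the plan is to compute this maximum in $k$ rounds of SFM, iterating over $i = 1, \ldots, k$. At round $i$, with the previously chosen $T \cap X_1, \ldots, T \cap X_{i-1}$ fixed, the optimal $T \cap X_i$ is to be obtained as the minimizer of a submodular function on $2^{X_i}$. Submodularity should hold because $X_i$ is an even set in $G \setminus (X_1 \cup \ldots \cup X_{i-1})$: every pair $\{x,y\} \subseteq X_i$ is an even pair in that subgraph, which constrains how an independent set of $G$ may interact with $X_i$ in a submodular manner. Each SFM costs $O(n^5)$, for $O(n^{5k})$ per level plus the $O(n)$ loop over $v^*$, totalling $O(n^{5k+1})$ per recursion level.

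\textbf{Runtime and main obstacle.} The parameter $z$ bounds the effective recursion depth: with each recursion shrinking the heavy-vertex support by factor $c$ (and the $(d+1)$ factor in the definition of $z$ reflecting how the $d$-bounded neighborhood structure constrains recursion branching within the $N^d[\cdot]$-balls of components), $z$ levels suffice before the remaining subproblems can be dispatched. The three dominant costs---separator construction $O(n^m)$, SFM-based combination $O(n^{5k+1})$, and recursion branching $O(n^z)$---combine additively via a standard charging argument over the recursion tree to give total runtime $O(n^{\max(m, z, 5k+1)})$. The hardest part, and the main novelty of the approach, is the SFM reduction in the previous paragraph: one must specify, at each level $i$, the correct submodular function on $2^{X_i}$ whose minimum encodes the optimum $T \cap X_i$, and prove its submodularity from the even set property of $X_i$ in the appropriate subgraph. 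The remaining pieces (heavy-vertex trick, separator via tameness, recursion on components) adapt established techniques from the proof of Lemma \ref{lemma:bounded_balanced_gives_IS}.
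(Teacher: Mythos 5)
Your skeleton (tame separator, recursion on components, SFM combination exploiting even sets) matches the paper's, but there are gaps in the execution that would make the argument fail. The biggest is your description of the SFM stage as a greedy sweep: ``at round $i$, with the previously chosen $T \cap X_1, \ldots, T \cap X_{i-1}$ fixed, the optimal $T \cap X_i$ is obtained as a submodular minimizer.'' Fixing the pieces sequentially is not valid, since the optimal $T \cap X_i$ depends on all the others. What the paper actually does is a \emph{nested} SFM. It sets $G_i = L_{k-i} \cup \cdots \cup L_k \cup D_1 \cup \cdots \cup D_l$, defines $f_i(A,B)$ as the maximum weight of an independent set $I$ of $G_i \setminus N(B)$ with $I \cap L_{k-i} = A$, observes $f_{i+1}(A,B) = w(A) + \alpha(G_i \setminus (N(A) \cup N(B)))$, and computes this $\alpha$ by an SFM call on $-f_i(\cdot, A \cup B)$, which is submodular by Lemma~\ref{lemma:submodular} applied to the even set $L_{k-i}$ in the appropriate subgraph; the SFM call at level $i+1$ uses the SFM at level $i$ as its evaluation oracle. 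This nesting is where the $n^{5k+1}$ comes from, and it is also why the \emph{iterated} even set property (not just that each $X_i$ is even in $G$) is the right hypothesis. You also defer Lemma~\ref{lemma:submodular} itself --- the short swapping argument that decomposes $G[I_A \cup I_B]$ into components reachable from $A \setminus B$ versus $B \setminus A$, recombines, and uses evenness of $S$ to conclude independence --- and that is the one lemma that actually converts evenness into submodularity, so it cannot be waved at.

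The second gap is the running-time accounting, in particular the role of $z$. It is not a recursion-depth bound, and the ``$(d+1)$ reflects $N^d[\cdot]$-balls'' reading misreads the definition: in a $(w,k,c,d)$-even set separator, $d$ bounds $|N(D)|$ for each component $D$, not the radius of any neighborhood. The actual use of $z$ is to close the induction on $n$. Using the uniform weight $1/n$ on all of $V(G)$, each component has $|D_i| \le cn$, and the table of recursive values $g_{D_i}(A)$ (at most $2^d$ entries per $D_i$) costs $2^d \sum_i |D_i|^y$ to fill. The paper bounds this by convexity: $\sum_i |D_i|^y \le (1/c+1)(cn)^y$ under the constraints $\sum_i |D_i| \le n$ and $0\le |D_i| \le cn$; then $y \ge z$ yields $2c^{y-1} \le 2^{-d}$, hence $2^d \sum_i |D_i|^y \le n^y$. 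Your sketch has no replacement for this calculation, and in fact the heavy-vertex trick you introduce undermines it: a separator balanced for the uniform weight on $U_v$ gives $|D_i \cap U_v| \le c|U_v|$ but says nothing about $|D_i|$ itself, so the convexity bound no longer applies and the induction on $n$ does not close. The paper avoids this entirely by taking the uniform weight on the whole vertex set, so no guess of a heavy vertex is needed.
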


The proof of Theorem \ref{thm:evensetseparator} relies on the well-known submodular function minimization algorithm. In this paper, we use Theorem \ref{thm:evensetseparator} to solve MWIS in ($C_4$, prism)-free perfect graphs of bounded degree. In fact, we work with a slightly larger class of ``paw-friendly graphs'', that we define in Section \ref{sec:star_free_bag}. We remark that this class contains graphs with arbitrarily large treewidth (e.g., bipartite subdivisions of a large wall), and therefore the standard techniques such as applying dynamic programming over the decomposition tree to solve MWIS do not work for graphs that we consider in this paper.

In Section \ref{sec:lastsection}, we prove the following theorem.

\begin{restatable}{theorem}{pawfriendlyevenseparator}
\label{thm:pawfriendlyevenseparator}
Let $c \in [\frac{1}{2}, 1)$ and let $\delta$ be a positive integer. Let $G$ be a paw-friendly  graph with maximum degree $\delta$. Then, $G$ is $(1 + \delta + \hdots + \delta^{\delta + 3}, c, 1 + \delta + \hdots + \delta^{\delta + 3}, 3)$-tame.
  \end{restatable}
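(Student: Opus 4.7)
The plan is to reduce the theorem to a single structural/algorithmic claim: for every paw-friendly graph $G$ of maximum degree $\delta$ and every uniform weight function $w$, there is a vertex $v \in V(G)$ such that $X := N^{\delta+3}[v]$ is a $(w,c)$-balanced separator of $G$, and such $v$ can be computed in $\mathcal{O}(|V(G)|^3)$ time. I would prove this structural claim using the structure theory of paw-friendly graphs developed in Section~\ref{sec:star_free_bag}.

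Given such a $v$, the packaging into an even set separator is automatic. Set $k := 1 + \delta + \cdots + \delta^{\delta+3}$; the degree bound gives $|X| \le k$. List $X = \{x_1, \ldots, x_t\}$ in any order with $t \le k$, define $X_i := \{x_i\}$ for $i \le t$, and $X_i := \emptyset$ for $t < i \le k$. Singletons and empty sets are vacuously even sets, so $(X_1, \ldots, X_k)$ is a $k$-iterated even set in $G$. Every connected component $D$ of $G \setminus X$ satisfies $N(D) \subseteq X$, hence $|N(D)| \le k$, and $w(D) \le c$ by choice of $v$. Therefore $(X_1, \ldots, X_k)$ is a $(w, k, c, k)$-even set separator, which witnesses $(k, c, k, 3)$-tameness of $G$.

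The main obstacle is the structural step. I would attempt an iterative centroid-style search. Start from an arbitrary vertex $v_0$. At step $i$, a BFS (cost $\mathcal{O}(|V(G)|^2)$) tests whether $N^{\delta+3}[v_i]$ is a $(w,c)$-balanced separator. If not, since $c \ge \tfrac{1}{2}$ there is a unique heavy component $D_i$ of $G \setminus N^{\delta+3}[v_i]$, and one takes $v_{i+1} \in D_i$ closest to $v_i$. Termination in $\mathcal{O}(|V(G)|)$ iterations would be ensured by a monotone potential, such as the weighted eccentricity of $v_i$ on the support of $w$. The hard part is proving strict monotonicity: this is precisely where the paw-friendly hypothesis is crucial, since it must rule out induced subgraphs — long holes, $C_4$'s, prism-like gadgets — that would otherwise let a heavy mass persist at distance greater than $\delta+3$ from the current centre. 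The specific radius $\delta+3$ should fall out of the paw-friendly structure lemmas once applied to shortest paths from $v_i$ to the support of $w$; verifying this is where I expect the bulk of the combinatorial effort to go.
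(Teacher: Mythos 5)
Your packaging step (turning a $(\delta+3)$-bounded $(w,c)$-balanced separator into a $(w,k,c,k)$-even set separator by taking singletons) is correct and is exactly how the paper handles one of its two cases. The fatal problem is the structural claim you reduce to: it is false. You assert that for every paw-friendly bounded-degree graph and every uniform weight function $w$ there is a vertex $v$ with $N^{\delta+3}[v]$ a $(w,c)$-balanced separator. But any bipartite graph is paw-friendly (it is perfect and contains no paw, so the defining condition is vacuous), and bounded-degree bipartite expanders have no bounded-size balanced separators at all, let alone ones of the form $N^{\delta+3}[v]$. No amount of clever centroid search or potential function can produce an object that does not exist. If your structural claim were true, the entire machinery of the paper — even sets, iterated decompositions, submodular minimization — would be unnecessary, since Lemma~\ref{lemma:bounded_balanced_gives_IS} alone would give a polynomial-time MWIS algorithm.

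The paper's proof is a genuine case analysis. It first checks in $\mathcal{O}(n^2 + nm)$ time whether some $N^{\delta+3}[v]$ is a $(w,c)$-balanced separator; if so it proceeds as you do. If not — and this is the substantive case — it invokes Lemma~\ref{lemma:first_final}: under the hypothesis that $G$ has \emph{no} $d$-bounded $(w,c)$-balanced separator with $d=\delta+3$, the star covering machinery of Sections~\ref{sec:star_separations}--\ref{sec:R_evensetseparator} applies. One forms the star-free bag $\beta$, which by Lemma~\ref{lemma:dim_is_2} is bipartite with parts $X_1, X_2$ that are even sets in $G$ (Lemma~\ref{lemma:T1T2_even}); one then adds $\R \setminus \beta = \bigcup_{x_2\in X_2} C_{x_2}$, layered into a $\delta^2$-iterated even set (Lemma~\ref{lemma:major_even_paths}), and every component of $G \setminus \R$ lies inside some $A_v$, giving the size and weight bounds. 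The core bag $\R$ is precisely the replacement for the bounded balanced separator that your plan incorrectly presumes to exist. Your proposal misses this entire branch, which is where essentially all the work of the paper lives.
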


Together, Theorems \ref{thm:evensetseparator} and \ref{thm:pawfriendlyevenseparator} imply that there is a combinatorial ploynomial-time algorithm to solve MWIS in paw-friendly graphs with bounded degree.

\begin{theorem}
  \label{thm:MWIS}
  There is a function $f: \mathbb{N} \rightarrow \mathbb{N}$ with the following
  property.
There is a combinatorial algorithm that, given a paw-friendly graph $G$  on $n$ vertices with maximum degree $\delta$, solves MWIS in $G$
in time at most $n^{f(\delta)}$.
\end{theorem}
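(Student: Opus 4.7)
The plan is to derive Theorem \ref{thm:MWIS} as a direct corollary of Theorems \ref{thm:evensetseparator} and \ref{thm:pawfriendlyevenseparator}. First I would fix a constant $c \in (\frac{1}{2},1)$, say $c = \frac{3}{4}$; this choice is needed because Theorem \ref{thm:evensetseparator} requires $c$ strictly above $\frac{1}{2}$, whereas Theorem \ref{thm:pawfriendlyevenseparator} permits any $c \in [\frac{1}{2},1)$, so both hypotheses can be satisfied simultaneously. Given a paw-friendly graph $G$ of maximum degree $\delta$, set $k(\delta) = d(\delta) = 1 + \delta + \delta^2 + \cdots + \delta^{\delta+3}$, a quantity that depends only on $\delta$.

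Next I would invoke Theorem \ref{thm:pawfriendlyevenseparator} with the chosen $c$ to conclude that $G$ is $(k(\delta),\, c,\, d(\delta),\, 3)$-tame. Feeding these parameters into Theorem \ref{thm:evensetseparator}, the auxiliary integer $z$ in that statement (the minimum integer with $c^{(z-1)/(d(\delta)+1)} \leq \frac{1}{2}$) is determined by $c$ and $d(\delta)$, so it too depends only on $\delta$. Consequently the running-time exponent $y = \max(3,\, z,\, 5k(\delta)+1)$ is a function of $\delta$ alone. Setting $f(\delta) := y$, Theorem \ref{thm:evensetseparator} supplies a combinatorial algorithm that solves MWIS on $G$ in time $\mathcal{O}(|V(G)|^{f(\delta)})$, which is exactly the conclusion sought.

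In short, the proof essentially writes itself: the parameters $k$, $d$, and $m = 3$ produced by Theorem \ref{thm:pawfriendlyevenseparator} are precisely the shape of input expected by Theorem \ref{thm:evensetseparator}, and the only verification required is that the resulting exponent $f(\delta)$ is bounded solely in terms of $\delta$. There is no serious obstacle at this final step; the entire difficulty of the main result is absorbed into the proofs of the two preceding theorems, in particular the structural analysis underlying the $(k,c,d,3)$-tame certificate for paw-friendly graphs of bounded degree.
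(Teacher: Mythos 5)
Your proposal is correct and matches the paper's approach: the paper explicitly states that Theorem~\ref{thm:MWIS} follows by combining Theorems~\ref{thm:evensetseparator} and~\ref{thm:pawfriendlyevenseparator}, and your observation that one must pick $c$ strictly greater than $\tfrac{1}{2}$ to satisfy both hypotheses, together with the check that $k$, $d$, $z$, and hence $y$ depend only on $\delta$, is exactly the (short) verification the paper leaves implicit.
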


Since by Theorem~\ref{thm:C4prismpawfriendly} every ($C_4$, prism)-free perfect graph
is paw-friendly, we deduce:

\begin{theorem}
\label{thm:c4prismevenseparator}
Let $c \in [\frac{1}{2}, 1)$ and let $\delta$ be a positive integer. Let $G$ be a ($C_4$, prism)-free perfect graph with maximum degree $\delta$. Then, $G$ is $(1 + \delta + \hdots + \delta^{\delta + 3}, c, 1 + \delta + \hdots + \delta^{\delta + 3}, 3)$-tame.
  \end{theorem}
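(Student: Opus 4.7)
The plan is essentially a one-line deduction from two results already stated in the excerpt, so my proposal will be to assemble them and verify that the hypotheses line up. First I would invoke Theorem \ref{thm:C4prismpawfriendly}, which is asserted (and will be proved elsewhere in the paper) to give that every $(C_4, \text{prism})$-free perfect graph $G$ is paw-friendly. This requires no hypothesis on the maximum degree, so it applies to our $G$ without any preparation. Once we know $G$ is paw-friendly and has maximum degree $\delta$, we are exactly in the setting of Theorem \ref{thm:pawfriendlyevenseparator}.

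Next I would apply Theorem \ref{thm:pawfriendlyevenseparator} with the same constants $c$ and $\delta$. That theorem directly yields that $G$ is $(1+\delta+\ldots+\delta^{\delta+3},\, c,\, 1+\delta+\ldots+\delta^{\delta+3},\, 3)$-tame, which is the conclusion we want. There is nothing to check about the parameters beyond the fact that they are the same numerical expressions in $\delta$ on both sides, so no arithmetic is needed.

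Since this deduction is purely syntactic, there is no genuine obstacle in proving Theorem \ref{thm:c4prismevenseparator} itself; the entire substance lies in Theorems \ref{thm:pawfriendlyevenseparator} and \ref{thm:C4prismpawfriendly}. The hard work, which will have been carried out earlier in Section \ref{sec:lastsection} and in the section establishing paw-friendliness for $(C_4,\text{prism})$-free perfect graphs, is producing the iterated even set separator inside a paw-friendly graph of bounded degree and verifying that the class of $(C_4, \text{prism})$-free perfect graphs fits into the broader ``paw-friendly'' framework. The final theorem is then simply the composition of these two statements, and the two-line proof will consist of (i) citing Theorem \ref{thm:C4prismpawfriendly} to upgrade the hypothesis from ``$(C_4, \text{prism})$-free perfect'' to ``paw-friendly'' and (ii) applying Theorem \ref{thm:pawfriendlyevenseparator}.
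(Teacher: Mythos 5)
Your proposal is correct and matches the paper exactly: the paper explicitly deduces Theorem~\ref{thm:c4prismevenseparator} from Theorem~\ref{thm:C4prismpawfriendly} (every $(C_4,\text{prism})$-free perfect graph is paw-friendly) combined with Theorem~\ref{thm:pawfriendlyevenseparator}. There is nothing further to add.
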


\begin{theorem}
  \label{thm:MWISC4prism}
  There is a function $f: \mathbb{N} \rightarrow \mathbb{N}$ with the following
  property.
There is a combinatorial algorithm that, given a ($C_4$, prism)-free perfect graph $G$  on $n$ vertices with maximum degree $\delta$, solves MWIS in $G$
in time at most $n^{f(\delta)}$.
\end{theorem}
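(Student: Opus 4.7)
The plan is to derive this theorem as an immediate corollary of the machinery assembled in the earlier sections, without any new graph-theoretic argument. The only ingredient needed beyond Theorem~\ref{thm:MWIS} is the inclusion of the graph class of interest inside the class of paw-friendly graphs, which is supplied by Theorem~\ref{thm:C4prismpawfriendly}.

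Concretely, first I would invoke Theorem~\ref{thm:C4prismpawfriendly} to observe that every $(C_4, \text{prism})$-free perfect graph $G$ is paw-friendly. Since the maximum-degree hypothesis is preserved verbatim, $G$ remains a paw-friendly graph of maximum degree $\delta$. Then I would apply Theorem~\ref{thm:MWIS} to $G$ directly: it provides a combinatorial algorithm that solves MWIS in time $n^{f(\delta)}$ on any paw-friendly input of maximum degree $\delta$, and the \emph{same} function $f$ can be reused for the statement we are proving.

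If instead one prefers to avoid routing through Theorem~\ref{thm:MWIS}, the identical conclusion can be obtained by chaining Theorem~\ref{thm:c4prismevenseparator} with Theorem~\ref{thm:evensetseparator}: setting $k = d = k_\delta := 1 + \delta + \cdots + \delta^{\delta+3}$, $m = 3$, and fixing any $c \in (\tfrac{1}{2}, 1)$, Theorem~\ref{thm:c4prismevenseparator} certifies that $G$ is $(k_\delta, c, k_\delta, 3)$-tame, and then Theorem~\ref{thm:evensetseparator} outputs a combinatorial MWIS algorithm of running time $\mathcal{O}(n^y)$ with $y = \max(3, z, 5k_\delta + 1)$, where $z$ is the smallest integer satisfying $c^{(z-1)/(k_\delta+1)} \leq \tfrac{1}{2}$. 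Each of $k_\delta$, $z$ and hence $y$ is a function of $\delta$ alone, so setting $f(\delta) := y$ yields the theorem.

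Because both $c$ and the graph class are already handled by prior results, there is no real obstacle here; the statement is essentially a packaging step. The only minor bookkeeping is to verify that the function $f$ in Theorem~\ref{thm:MWIS} is indeed independent of the choice of graph and depends only on $\delta$, which is explicit in its statement, and to confirm that the paw-friendly reduction of Theorem~\ref{thm:C4prismpawfriendly} is purely structural and does not alter $\delta$.
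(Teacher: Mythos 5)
Your proposal matches the paper exactly: Theorem~\ref{thm:MWISC4prism} is stated there as an immediate consequence of Theorem~\ref{thm:C4prismpawfriendly} (every $(C_4,\text{prism})$-free perfect graph is paw-friendly) combined with Theorem~\ref{thm:MWIS}, which is precisely your first route. Your alternative chaining of Theorem~\ref{thm:c4prismevenseparator} with Theorem~\ref{thm:evensetseparator} is just an unfolding of how Theorem~\ref{thm:MWIS} itself is obtained, so there is nothing to add.
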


Theorems \ref{thm:c4prismevenseparator} and \ref{thm:MWISC4prism} are two distinct contributions related to ($C_4$, prism)-free perfect graphs with bounded degree. The first is a structure theorem proving the presence of even set separators. The second is an application of submodularity to solve MWIS. The remainder of the paper is structured as follows. In Section \ref{sec:even_set_separators}, we define submodular functions and prove Theorem \ref{thm:evensetseparator}. In Sections \ref{sec:central_bags} and \ref{sec:star_separations}, we define iterated decompositions and star separations, and describe their key properties. In Section \ref{sec:paws_are_forcers}, we prove that the presence of certain induced subgraphs in ($C_4$, prism)-free perfect graphs forces a decomposition. In Section \ref{sec:star_free_bag}, we describe iterated decompositions in ($C_4$, prism)-free perfect graphs. In Section \ref{sec:R_evensetseparator}, we construct iterated even sets in ($C_4$, prism)-free perfect graphs. Finally, in Section \ref{sec:lastsection}, we prove Theorem \ref{thm:pawfriendlyevenseparator}. 

\paragraph{Definitions and notation.}
Let $G = (V, E)$ be a graph. For $X \subseteq V(G)$, $G[X]$ denotes the induced subgraph of $G$ with vertex set $X$ and $G \setminus X$ denotes the induced subgraph of $G$ with vertex set $V(G) \setminus X$. We use induced subgraphs and their vertex sets interchangeably throughout the paper. For a graph $H$, we say that a set $X \subseteq V(G)$ is an $H$-copy in $G$ if $G[X]$ is isomorphic to $H$.

Let $X \subseteq V(G)$. The \emph{neighborhood} of $X$ in $G$, denoted by $N(X)$, is the set of all vertices in $V(G) \setminus X$ with a neighbor in $X$. The \emph{closed neighborhood} of $X$ in $G$, denoted $N[X]$, is given by $N[X] = N(X) \cup X$. For $u \in V(G)$, $N(u) = N(\{u\})$ and $N[u] = N[\{u\}]$. For $u \in V(G) \setminus X$, $N_X(u) = N(u) \cap X$. Let $Y \subseteq V(G)$ be disjoint from $X$. We say $X$ is \emph{complete} to $Y$ if every vertex in $X$ is adjacent to every vertex in $Y$, and $X$ is \emph{anticomplete} to $Y$ if every vertex in $X$ is non-adjacent to every vertex in $Y$. Note that the empty set is complete and anticomplete to every $X \subseteq V(G)$. We say that a vertex $v$ is \emph{complete} (\emph{anticomplete}) to $X \subseteq V(G)$ if $\{v\}$ is complete (anticomplete) to $X$. A \emph{cutset} of $G$ is a subset $K \subseteq V(G)$ such that $G \setminus K$ is not connected. A set $S \subseteq V(G)$ is a \emph{star cutset} of $G$ if $S$ is a cutset and there exists $v \in S$ such that $S \subseteq N[v]$.

\section{Submodular functions and even set separators}
\label{sec:even_set_separators}

In this section, we describe a combinatorial algorithm to solve MWIS in $(k, c, d, m)$-tame graphs that runs in time polynomial in the number of vertices (with $k, c, d, m$ fixed). To do so, we make use of submodular functions. Given a finite set $S$, a set function $f:2^S \rightarrow \mathbb{R}$ is said to be \emph{submodular} if
$$f(A) + f(B) \geq f(A \cup B) + f(A \cap B),$$
for all subsets $A, B$ of $S$. The above inequality is known as the \emph{submodular inequality}. There are several examples of submodular functions that appear in graph theory, see \cite{L-Subm} for an exposition of these examples.

Minimizing a submodular function $f:2^S \rightarrow \mathbb{R}$ is the problem of finding a subset of $S$ that minimizes $f$. Assuming the availability of an \emph{evaluation oracle}, that is, a black box whose input is some set $U \subseteq S$, and whose output is $f(U)$, there exist fully combinatorial strongly polynomial time algorithms for submodular function minimization, see \cite{McCormick} for a survey of these algorithms. These are algorithms which use only additions, subtractions, and comparisons, and whose running time is a polynomial function of $|S|$ only. (In contrast, submodular function maximization is known to be NP-hard.)

\begin{lemma}[\cite{O-Sbm}]
Let $f:2^S \rightarrow \mathbb{R}$ be a submodular function defined on the subsets of a set $S$ with $n$ elements. Then, there is a combinatorial algorithm to minimize $f$ in time $\mathcal{O}(n^5EO + n^6)$, where EO is the running time of evaluating $f(A)$ for a given $A \subseteq S$.
\label{lemma:MWIS_submodular}
\end{lemma}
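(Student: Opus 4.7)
The plan is to reformulate submodular minimization as a convex optimization over the \emph{base polytope}
\[ B(f) = \{x \in \mathbb{R}^S : x(A) \le f(A) \text{ for every } A \subseteq S,\ x(S) = f(S)\}, \]
and to exploit its combinatorial structure so as to avoid the ellipsoid method entirely. The two ingredients I would set up first are Edmonds' greedy characterization of extreme points and the Edmonds--Fujishige min--max duality. For each linear ordering $s_{i_1} < \cdots < s_{i_n}$ of $S$, the point $x^<$ defined by $x^<(s_{i_j}) = f(\{s_{i_1},\ldots,s_{i_j}\}) - f(\{s_{i_1},\ldots,s_{i_{j-1}}\})$ is a vertex of $B(f)$ computable with $n$ oracle calls, and every vertex arises this way. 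The duality
\[ \min_{A \subseteq S} f(A) \;=\; \max_{x \in B(f)} x^-(S), \qquad x^-(s) = \min(x(s),0), \]
lets us read off a candidate minimizer $W = \{s : x(s) < 0\}$ from any dual feasible $x$, and certifies optimality as soon as $f(W) = x^-(S)$.

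On top of this I would run an augmenting-exchange scheme in the spirit of Schrijver and Iwata--Fleischer--Fujishige. The algorithm maintains $x$ as a convex combination $x = \sum_k \lambda_k x^{<_k}$ of at most $n$ greedy extreme points, stored via the orderings $<_k$ and the coefficients $\lambda_k$; every action on $x$ then translates into additions, subtractions, and comparisons of values of $f$ on sets obtained by short prefix modifications of the $<_k$. In each iteration one builds an auxiliary digraph on $S$ whose arcs encode admissible swaps in some $<_k$, searches for a path from a ``positive'' element to a ``negative'' one, and either updates $x$ along the path to strictly increase $x^-(S)$, or, if no such path exists, extracts the witness set $W$ described above. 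A Carath\'eodory reduction after each update keeps the support of $x$ at size at most $n$.

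The main obstacle is the potential-function analysis that bounds the number of augmenting steps by a polynomial in $n$ alone, since the target running time $n^5\, EO + n^6$ demands strong polynomiality with no dependence on $\|f\|_\infty$. This is Orlin's contribution: by combining a lexicographic rule on which element to advance with a scaling framework on the convex coefficients $\lambda_k$, one shows that each element can change its rank $O(n^2)$ times across the active orderings and that each iteration costs $O(n \cdot EO + n^2)$ elementary operations on $f$-values. Verifying this amortized bound, and checking that the Carath\'eodory reductions and greedy updates never leave the purely combinatorial (addition--subtraction--comparison) regime, is the technically hardest part of the proof; everything else is bookkeeping on top of Edmonds' greedy procedure.
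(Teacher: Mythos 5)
This lemma is not proved in the paper at all: it is quoted verbatim as an external result of Orlin \cite{O-Sbm}, and the paper uses it purely as a black box. So the relevant comparison is between your sketch and the cited work, not between your sketch and anything in the text.

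As a proof, your proposal has a genuine gap. The framework you set up (the base polytope, Edmonds' greedy construction of its vertices, the Edmonds--Fujishige min--max identity $\min_A f(A) = \max_{x \in B(f)} x^-(S)$, and maintaining $x$ as a convex combination of at most $n$ greedy extreme bases with Carath\'eodory reductions) is standard and correctly stated, but it is common to essentially all combinatorial SFM algorithms and only yields correctness of the stopping criterion. The entire content of the lemma is the specific running-time bound $n^5\,EO + n^6$, and that is exactly the part you do not establish: you assert that ``each element can change its rank $O(n^2)$ times across the active orderings'' and that each iteration costs $O(n \cdot EO + n^2)$, and you yourself flag that verifying this is ``the technically hardest part.'' Asserting the potential-function bound is not proving it; without it, the argument as written only shows finiteness (or, with the Schrijver/IFF machinery you gesture at, a weaker polynomial bound such as $O(n^7 EO)$ or $O(n^8 EO)$, not Orlin's $n^5 EO + n^6$). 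I would also caution that the mechanism you describe --- an auxiliary digraph of admissible swaps with augmenting paths from positive to negative elements, combined with a lexicographic advancement rule and scaling of the $\lambda_k$ --- is a blend of the Schrijver and Iwata--Fleischer--Fujishige schemes, and it is not clear that this particular combination reproduces Orlin's analysis; his improvement to $n^5 EO + n^6$ rests on a different inner loop and a different amortization. Since the paper itself treats the lemma as a citation, the appropriate resolution is either to cite \cite{O-Sbm} as the paper does, or to supply the full iteration-count argument, which your sketch currently omits.
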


Let $G$ be a graph with vertex weight function $w$. Let $\alpha(G)$ denote the maximum weight of an independent set of $G$. Let $S$ be an independent set in a graph $G$ and let $A \subseteq S$. We let $I_{A,S}(G)$ denote a maximum weight independent set $I$ in $G$ such that $I \cap S = A$. In words, $I_{A,S}(G)$ is a maximum weight independent set in $G$ that ``extends'' the set $A \subseteq S$ to $V(G) \setminus S$. We denote the weight of $I_{A,S}(G)$ by $\alpha_{A,S}(G)$ (i.e. $\alpha_{A, S}(G) = w(I_{A, S}(G))$). The following lemma is the key connection between the two main ingredients of our algorithm: submodular functions and even sets. The proof is similar in spirit to the proof of the statement 6.5 of \cite{CTTV}.

\begin{lemma}
Let $S$ be an even set in a graph $G$. Let $f:2^S \rightarrow \mathbb{R}$ be a set function defined on the subsets of $S$ and given by $f(A) = -\alpha_{A,S} (G)$ for $A \subseteq S$. Then, the function $f$ is submodular.
\label{lemma:submodular}
\end{lemma}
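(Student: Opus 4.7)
The plan is to rewrite the submodular inequality $f(A)+f(B)\geq f(A\cup B)+f(A\cap B)$ as the equivalent supermodular inequality
\[
\alpha_{A\cup B,S}(G)+\alpha_{A\cap B,S}(G)\ \geq\ \alpha_{A,S}(G)+\alpha_{B,S}(G),
\]
and, starting from any optimal extensions $I_A=I_{A,S}(G)$ and $I_B=I_{B,S}(G)$, to build two independent sets $J$ and $K$ with $J\cap S=A\cup B$, $K\cap S=A\cap B$, and $w(J)+w(K)=w(I_A)+w(I_B)$. For this I will analyse the graph $H=G[I_A\triangle I_B]$, which is bipartite with sides $I_A\setminus I_B$ and $I_B\setminus I_A$ since $I_A$ and $I_B$ are each independent and hence every edge of $G[I_A\cup I_B]$ crosses that bipartition.

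The central step will be to prove that for every connected component $C$ of $H$, either $C\cap S\subseteq A\setminus B$ or $C\cap S\subseteq B\setminus A$. Suppose instead that some $C$ contains both $s\in A\setminus B\subseteq I_A\setminus I_B$ and $s'\in B\setminus A\subseteq I_B\setminus I_A$, and let $P$ be a shortest path from $s$ to $s'$ inside $C$. Since $H$ is bipartite and $s,s'$ lie on opposite sides, $P$ has odd length. Moreover $P$ is induced in $G$: any chord would join two vertices of $I_A\triangle I_B$, hence, by independence of $I_A$ and $I_B$, would have to cross the bipartition of $H$ and therefore already be an edge of $H$, contradicting that $P$ is a shortest path in $H$. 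Thus $P$ is an induced odd path in $G$ between two vertices of $S$, contradicting that $S$ is an even set. This shortest-path argument, and in particular the observation that chords in $G$ are automatically edges of $H$, is the main obstacle and the only place the even-set hypothesis is used.

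Given the claim, I will call $C$ an $A$-component if $C\cap S\subseteq A\setminus B$ and a $B$-component otherwise (components with $C\cap S=\emptyset$ may be assigned either label) and define
\[
J=(I_A\cap I_B)\cup\bigcup_{A\text{-comp. }C}(C\cap I_A)\cup\bigcup_{B\text{-comp. }C}(C\cap I_B),
\]
\[
K=(I_A\cap I_B)\cup\bigcup_{A\text{-comp. }C}(C\cap I_B)\cup\bigcup_{B\text{-comp. }C}(C\cap I_A).
\]
Every vertex of $A\setminus B$ lies in an $A$-component and every vertex of $B\setminus A$ in a $B$-component, so a short check yields $J\cap S=(A\cap B)\cup(A\setminus B)\cup(B\setminus A)=A\cup B$ and $K\cap S=A\cap B$. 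For independence of $J$ (and analogously of $K$), any two of its vertices either both lie in $I_A$, both lie in $I_B$, or lie in distinct components of $H$; in the last case an edge between them would live in $H$ and merge those components, which is impossible. Summing component by component gives $w(J)+w(K)=2w(I_A\cap I_B)+\sum_C w(C)=w(I_A)+w(I_B)$, and since $J$ and $K$ are feasible extensions of $A\cup B$ and $A\cap B$, the supermodular inequality follows, proving that $f$ is submodular.
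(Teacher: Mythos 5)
Your argument is correct and is essentially the paper's own proof: both pass to the bipartite graph on $I_A\cup I_B$ (your $H=G[I_A\triangle I_B]$ is that graph minus its isolated vertices), both derive the crucial fact that no component touches both $A\setminus B$ and $B\setminus A$ from the even-set hypothesis applied to an induced path of odd length, and both swap $I_A$- and $I_B$-parts across components to produce independent sets realizing $A\cup B$ and $A\cap B$ with the same total weight. The paper phrases the component dichotomy via the reachability sets $Y_A,Y_B$ and uses a ``minimal'' path rather than a shortest one, but these are only cosmetic differences from your version.
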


\begin{proof}
Let $A, B$ be two subsets of $S$. Our goal is to show that the following inequality holds:
$$\alpha_{A,S} (G) + \alpha_{B,S} (G) \leq \alpha_{A\cup B,S} (G) + \alpha_{A\cap B,S} (G).$$
For ease of notation, let $I_A = I_{A,S}(G)$ and $I_B = I_{B,S}(G)$. In the bipartite graph $G[I_A \cup I_B]$, we denote by $Y_A$ (resp. $Y_B$) the set of those vertices of $I_A \cup I_B$ for which there exists a path in $G[I_A \cup I_B]$ joining them to a vertex of $A \setminus B$ (resp. $B \setminus A$). Note that by the definition of the sets $Y_A$ and $Y_B$, we have $(A \setminus B) \subseteq Y_A$, $(B \setminus A) \subseteq Y_B$, and $Y_A \cup Y_B$ is anticomplete to $(I_A \cup I_B) \setminus (Y_A \cup Y_B)$. We claim that $Y_A \cap Y_B = \emptyset$ and $Y_A$ is anticomplete to $Y_B$. Suppose not, then there is a path $P$ in $G[I_A \cup I_B]$ from a vertex of $A \setminus B$ to a vertex of $B \setminus A$. We may assume that $P$ is minimal with respect to this property, and so the interior of $P$ is in $V(G) \setminus S$. Hence, $P$ is of odd length because $G[I_A \cup I_B]$ is bipartite. This contradicts the assumption that $S$ is an even set in $G$. Now, we define the following sets:
\begin{equation*}
\begin{split}
S_1 &= (I_A \cap Y_A) \cup (I_B \cap Y_B) \cup (I_A \setminus (Y_A \cup Y_B)),\\
S_2 &= (I_A \cap Y_B) \cup (I_B \cap Y_A) \cup (I_B \setminus (Y_A \cup Y_B)).
\end{split}
\end{equation*}
Since $Y_A \cap Y_B = \emptyset$ and $Y_A$ is anticomplete to $Y_B$, observe that $S_1$ and $S_2$ are independent sets, and that $S_1 \cap S = A \cup B$ and $S_2 \cap S = A \cap B$. Moreover, since $I_A \cup I_B = S_1 \cup S_2$ and $I_A \cap I_B = S_1 \cap S_2$, we have $w(I_A) + w(I_B) = w(S_1) + w(S_2)$. Hence, we obtain
$$\alpha_{A,S} (G) + \alpha_{B,S} (G) = w(I_A) + w(I_B) = w(S_1) + w(S_2) \leq \alpha_{A\cup B,S} (G) + \alpha_{A\cap B,S} (G).$$
This completes the proof.
\end{proof}

Now, we describe how to solve MWIS in polynomial time in graphs with even set separators. The idea of the proof is as follows. Let $G$ be a  $(k, c, d, m)$-tame graph. First, we find a $k$-iterated even set separator $L = (L_1, \hdots, L_k)$ of $G$, which exists since $G$ is tame. Next, we define a sequence of graphs $G_1, \hdots, G_k$, where $G_i = (G \setminus L) \cup \bigcup_{j=0}^{i-1} L_{k-j}$. We iteratively solve MWIS in $G_i$ by computing functions that allow us to find the maximum weight independent set of $G_i$ given a set of vertices that must be included in the independent set. These functions are computed via submodular function minimization, and solutions for $G_{i-1}$ serve as oracles for the submodular function minimization for $G_i$. 
The fact that each of the components of $G \setminus L$ contains at most  $c|V(G)|$ vertices and has at most $d$
attachments in $L$ allows us to use convexity to keep the complexity of the algorithm polynomial-time.

\leqnomode
\evensetseparator*
\begin{proof}
Let $G$ be a $(k,c,d,m)$-tame graph. Note that it follows that every induced subgraph of $G$ is $(k,c,d,m)$-tame. We prove the result by induction on $n=|V(G)|$. Assume inductively that for every proper induced subgraph $H$ of $G$, MWIS can be solved on $H$ in ${\cal O}(|V(H)|^y)$-time. Let $w$ be a weight function on $V(G)$ that is part of the input to MWIS problem on $G$. We now compute the maximum weight (w.r.t. $w$) of an independent set in $G$ (i.e. $\alpha(G)$) in ${\cal O}(n^y)$-time.

\begin{equation}
\longbox{{\it There exists a $k$-iterated even set $L=(L_1,\ldots ,L_k)$ such that for every connected component $D$ of $G\setminus L$,
$|N(D)|\leq d$ and $|D| \leq cn$. Furthermore, such a set can be constructed in ${\cal O}(n^m)$-time.}}
\label{k-iterated-set}\tag{$1.2.1$}
\end{equation}
\\
{\em Proof of \eqref{k-iterated-set}:}
Let $w^* : V(G)\rightarrow [0,1]$ be such that $w^*(v)=\frac{1}{n}$, for every $v\in V(G)$.
Let $L=(L_1, \ldots ,L_k)$ be a $(w^*,k,c,d)$-even set separator that can be constructed in ${\cal O}(n^m)$-time, which exists since $G$ is
$(k,c,d,m)$-tame. 
Let $D$ be a connected component of $G\setminus L$. Then $|N(D)|\leq d$ and $w^*(D)\leq c$.
Since $w^*(D)=|D|\cdot \frac{1}{n}\leq c$, it follows that $|D|\leq cn$.
This proves \eqref{k-iterated-set}. \\

Let $L=(L_1,\ldots ,L_k)$ be the $k$-iterated even set from \eqref{k-iterated-set}, and let $D_1,\ldots ,D_l$ be the connected components of $G\setminus L$. Then for every $i = 1, \ldots ,l$, the following hold: $|N(D_i)|\leq d$, $|D_i|\leq cn$ and for every $i = 1, \hdots, k$, $L_i$ is an even set in $G\setminus (\cup_{j<i} L_j)$.

For $i = 1, \ldots ,l$,  let $g_{D_i}: 2^{N(D_i)} \rightarrow \mathbb{R}$ be a function such that for every $A\subseteq N(D_i)$, it holds that 
$g_{D_i}(A) = \alpha (D_i\setminus N(A))$.

\begin{equation}
\longbox{{\it We can compute the functions $g_{D_1}, \ldots ,g_{D_l}$  in  ${\cal O}(n^y)$-time and store the values in a table $T$.}}\label{table}\tag{$1.2.2$}\end{equation}
{\em Proof of \eqref{table}:}
Since $|D_i|\leq cn$ and $c<1$, it follows from the inductive hypothesis that $g_{D_i} (A)$ can be computed in ${\cal O}(|D_i|^y)$-time, for every $A\subseteq N(D_i)$.
Since $|N(D_i)|\leq d$, there are at most $2^d$ possible sets in the domain of $g_{D_i}$, for all $i=1,\ldots ,l$.
Therefore, functions $g_{D_1}, \ldots ,g_{D_l}$ can be computed in ${\cal O}(2^d\sum_{i=1}^{l} |D_i|^y)$-time. We now show that $2^d \sum_{i=1}^{l} |D_i|^y \leq n^y$. Consider the function $\sum_{i=1}^\ell x_i^y$ (where $y$ is constant) with the constraints $\sum_{i = 1}^\ell x_i \leq n$ and $0 \leq x_i \leq cn$ for all $i = 1, \hdots, \ell$. Since the function $\sum_{i=1}^\ell x_i^y$ is  convex when $x_i \geq 0$ for $i = 1, \hdots, \ell$, and thus over the above feasible set, it follows that it is maximized at an extreme point of the feasible set (see, e.g., \cite[Theorem~7.42]{beck}). The extreme points of the feasible set defined by $0 \leq |D_i| \leq cn$ for $i = 1, \hdots, \ell$ and $\sum_{i = 1}^l |D_i| \leq n$ can easily be enumerated. Indeed, up to a permutation, the extreme points are one of the following types (depending on the relationship between $c$ and $\ell$):
\begin{itemize}[leftmargin=1.3em]
\itemsep0em
\item $(|D_1|, \dots, |D_{\ell}|) = (0, \dots, 0)$,
\item $(|D_1|, \dots, |D_{\ell}|) = (cn, \dots, cn)$,
\item $(|D_1|, \dots, |D_r|, |D_{r+1}|, \dots, |D_{\ell}|) = (cn, \dots, cn, 0, \dots, 0)$ for some $r \in \{1, \hdots, \ell-1 \}$,
\item $(|D_1|, \dots, |D_r|, |D_{r+1}|, |D_{r+2}| \dots, |D_{\ell}|) = (cn, \dots, cn, n-rcn , 0, \dots, 0)$ for some $r \in \{1, \hdots, \ell-1\}$.
\end{itemize}
It is then not difficult to verify that
\begin{itemize}[leftmargin=1.3em]
\itemsep0em
\item if $\ell \leq \frac{1}{c}$, then the sum $\sum_{i=1}^{l} |D_i|^y$ is maximized when $|D_i| = cn$ for $i = 1, \hdots, \ell$, and 
\item if $\ell > \frac{1}{c}$, then the sum $\sum_{i=1}^{l} |D_i|^y$ is maximized when $\lfloor{\frac{1}{c}}\rfloor$ of $|D_i|$'s are equal to $cn$, one of $|D_i|$'s is equal to $n - (cn)\lfloor{\frac{1}{c}}\rfloor$, 
and the rest (if any) are equal to $0$.
\end{itemize}
In the first case $\sum_{i=1}^{l} |D_i|^y \leq \frac{1}{c}(cn)^y$, and in the second case 
$$\sum_{i=1}^{l} |D_i|^y \leq \frac{1}{c}(cn)^y + (n(1 - c\lfloor\frac{1}{c}\rfloor))^y \leq \left(\frac{1}{c} + 1\right) (cn)^y.$$
It follows that $\sum_{i=1}^{l} |D_i|^y \leq \left(\frac{1}{c} + 1\right) (cn)^y$. Since $y \geq z$, we have that $2c^{y-1} \leq \frac{1}{2^{d}}$.
But now since $c \leq 1$, we have $2^d \sum_{i=1}^{l} |D_i|^y \leq n^y$ as required. This proves \eqref{table}. \\

For $i=0, \ldots ,k-1$, let $G_i=L_{k-i}\cup \ldots \cup L_k\cup D_1\cup \ldots D_l$, and let
$f_i:2^{L_{k-i}}\times 2^{L_1\cup \ldots \cup L_{k-i-1}} \rightarrow \mathbb{R}$ be a function such that
for every $B\subseteq L_1\cup \ldots \cup L_{k-i-1}$ and for every $A\subseteq L_{k-i}\setminus N(B)$,
$f_i(A,B)$ is the maximum weight of an independent set $I$ of $G_i\setminus N(B)$ such that $I\cap L_{k-i}=A$.

\begin{equation}
    \longbox{{\it Given a table $T$, and sets $B\subseteq L_1\cup \ldots \cup L_{k-i-1}$ and $A\subseteq L_{k-i}\setminus N(B)$,
$f_i(A,B)$ can be computed  in ${\cal O}(n^{5i+1})$-time.}}\label{computed}\tag{$1.2.3$}\end{equation}
{\em Proof of \eqref{computed}:}
We proceed by induction on $i$.  For $i=0$, $G_0=L_k\cup D_1\cup \ldots \cup D_l$, and for 
$B\subseteq L_1\cup \ldots \cup L_{k-1}$ and $A\subseteq L_{k}\setminus N(B)$,

$$f_0(A,B)=w(A)+\sum_{I=1}^{l} g_{D_i}((A\cup B)\cap N(D_i)).
$$ 
So $f_0(A,B)$ can be computed by $l\leq n$ lookups to the table $T$.

Suppose that given $B\subseteq L_1\cup \ldots \cup L_{k-i-1}$ and $A\subseteq L_{k-i}\setminus N(B)$, $f_i(A,B)$ can be computed in ${\cal O}(n^{5i+1})$-time
using lookups to the table $T$. We now consider $f_{i+1}$.
Let $B\subseteq L_1\cup \ldots \cup L_{k-i-2}$, $G_{i+1}'=G_{i+1}\setminus N(B)$ and $A\subseteq L_{k-i-1}\cap G_{i+1}'$.
We need to show that we can compute $f_{i+1}(A,B)$ in ${\cal O}(n^{5(i+1)+1})$-time.

Note that $f_{i+1}(A,B)$ is the maximum weight of an independent set $I$ of $G_{i+1}'$ such that $I\cap L_{k-i-1}=A$. So 
$$f_{i+1}(A,B) = w(A)+\alpha (G_i'),\text{  where } G_i'=G_i\setminus (N(A)\cup N(B)).$$

By Lemma \ref{lemma:submodular} (applied to the even set $L_{k-i-1}$ and graph $G_i\setminus N(Y)$), the function $-f_i(\cdot ,Y)$ is submodular for every 
$Y\subseteq L_1\cup \ldots \cup L_{k-i-1}$.
So MWIS can be solved in $G_i'$ by minimising $-f_i(\cdot ,A\cup B)$, which by Lemma 2.1 can be computed in ${\cal O}(n^5EO+n^6)$-time,
where EO is the running time of evaluating $f_i(C,A\cup B)$ for a given $C\subseteq L_{k-i}$.
Since by the inductive hypothesis $f_i(C,A\cup B)$  can be computed in ${\cal O}(n^{5i+1})$-time for any $C\subseteq L_{k-i}$,
it follows that $f_{i+1}(A,B)$ can be computed in ${\cal O}(n^{5(i+1)+1})$-time.
This proves \eqref{computed}. \\

Let $T$ be the table constructed in \eqref{table}.
Consider the function $f:2^{L_1} \rightarrow  \mathbb{R}$ such that for every $A\subseteq L_1$, 

\vspace{2ex}

\begin{center}
$f(A)=f_{k-1}(A,\emptyset)=$ maximum weight of an independent set $I$ of $G$ such that $I\cap L_1=A$.
\end{center}

\vspace{2ex}

By Lemma 2.2 (applied to the even set $L_1$ and the graph $G$), $-f$ is submodular, and so by 
Lemma 2.1, $-f$ can be minimized in ${\cal O}(n^5EO+n^6)$-time, where EO is the running time of evaluating $f(A)$.
By \eqref{computed}, $f(A)$ can be evaluated in ${\cal O}(n^{5(k-1)+1})$-time. It follows that $-f$ can be minimized in ${\cal O}(n^{5k+1})$-time.
Therefore, by \eqref{k-iterated-set} and \eqref{table}, MWIS can be computed in $G$ in  ${\cal O}(n^y)$-time.
\end{proof}

\section{Iterated decompositions and their central bags}
\label{sec:central_bags}

Let $G$ be a graph. In what follows, unless otherwise specified, $w: V(G) \to [0, 1]$ is a weight function on $V(G)$ with $w(G) = 1$. A {\em separation of $G$} is a triple $(A, C, B)$ such that the (possibly empty) sets $A$, $B$, $C$ are pairwise vertex-disjoint, $A \cup C \cup B = V(G)$, and $A$ is anticomplete to $B$. When $S = (A, C, B)$ is a separation of $G$, we use the following notation: $A(S) = A$, $C(S) = C$, and $B(S) = B$. A separation $(A, C, B)$ is {\em $d$-bounded} if $C$ is $d$-bounded. For $\varepsilon \in [0, 1]$, a separation $(A, C, B)$ is {$\varepsilon$-skewed} if $w(A) < \varepsilon$ or $w(B) < \varepsilon$.

\begin{lemma}
\label{lemma:skewed}
Let $G$ be a graph. Let $w: V(G) \to [0, 1]$ be a weight function on $V(G)$ with $w(G) = 1$. Let $c \in [\frac{1}{2}, 1)$ and $d$ be a nonnegative integer. Suppose $G$ has no $d$-bounded $(w, c)$-balanced separator. Let $(A, C, B)$ be a $d$-bounded separation of $G$. Then, $(A, C, B)$ is $(1-c)$-skewed.
\end{lemma}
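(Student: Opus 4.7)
The plan is to argue by contrapositive: I will assume that $(A,C,B)$ is not $(1-c)$-skewed, meaning that both $w(A)\geq 1-c$ and $w(B)\geq 1-c$, and derive that the set $C$ itself is a $d$-bounded $(w,c)$-balanced separator, contradicting the hypothesis that no such separator exists.

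The main structural observation is that because $A$ is anticomplete to $B$ in the separation $(A,C,B)$, every connected component $D$ of $G\setminus C$ is entirely contained in $A$ or entirely contained in $B$. Indeed, any component meeting both $A$ and $B$ would contain a path between them, and such a path would have to use a vertex of $C$ (or an edge between $A$ and $B$, which does not exist), a contradiction. Hence it suffices to bound $w(A)$ and $w(B)$ individually.

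From $w(A)+w(C)+w(B)=w(G)=1$ and the assumption $w(B)\geq 1-c$, I immediately obtain $w(A)\leq 1-w(B)\leq c$; symmetrically $w(B)\leq c$. Combined with the previous paragraph, this gives $w(D)\leq c$ for every connected component $D$ of $G\setminus C$. Since by hypothesis $C$ is $d$-bounded, $C$ is then a $d$-bounded $(w,c)$-balanced separator of $G$, contradicting the assumption of the lemma. Therefore $w(A)<1-c$ or $w(B)<1-c$, i.e.\ $(A,C,B)$ is $(1-c)$-skewed.

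There is no real obstacle here; the only thing to be careful about is the trivial case where one of $A$ or $B$ is empty (in which case that side already has weight $0<1-c$), and the use of $c\in[\tfrac12,1)$, which ensures $1-c\leq c$ so the arithmetic $w(B)\geq 1-c \Rightarrow w(A)\leq c$ produces a legitimate $(w,c)$-balanced bound rather than a vacuous one.
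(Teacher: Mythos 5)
Your proof is correct and is essentially the same argument as the paper's, only phrased as a contrapositive: the paper directly observes that $G$ having no $d$-bounded $(w,c)$-balanced separator rules out having both $w(A)\leq c$ and $w(B)\leq c$ (since $C$ would then be such a separator, every component of $G\setminus C$ lying in $A$ or $B$), then takes $w(B)>c$ to conclude $w(A)<1-c$; you reverse the direction but the structural observation and arithmetic are identical. One small remark on your closing aside: the hypothesis $c\geq\tfrac12$ is not actually what makes the implication $w(B)\geq 1-c\Rightarrow w(A)\leq c$ go through (that follows from $w(A)+w(B)\leq 1$ for any $c$); its role is just that for $c<\tfrac12$ the non-skewed hypothesis $w(A),w(B)\geq 1-c>\tfrac12$ is already impossible, so the lemma would be vacuously true but uninteresting.
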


\begin{proof}
Since $G$ has no $d$-bounded $(w, c)$-balanced separator, not both $w(A) \leq c$ and $w(B) \leq c$. We may assume that $w(B) > c$, and so $w(A) < 1 - c$. It follows that $(A, C, B)$ is $(1-c)$-skewed. 
\end{proof}

For the remainder of the paper, if $(A, C, B)$ is $\varepsilon$-skewed, we assume by convention that $w(A) < \varepsilon$. Next, we discuss important relationships between two separations of a graph. Two separations $S_1$ and $S_2$ of a graph $G$ are {\em loosely non-crossing} if $A(S_1) \cap C(S_2) = \emptyset$ and $A(S_2) \cap C(S_1) = \emptyset$. The loosely non-crossing property is similar to the non-crossing property (\cite{RS-GMX}). 
Two separations $S_1$ and $S_2$ are {\em non-crossing} if they are loosely non-crossing and $A(S_1) \cap A(S_2) = \emptyset$. 
Figure \ref{fig:non-crossing} illustrates the properties of non-crossing and loosely non-crossing separations. Note that if two separations are non-crossing, then they are also loosely non-crossing. 
We say that $\S$ is {\em loosely laminar} if $S_1$ and $S_2$ are loosely non-crossing for all $S_1, S_2 \in \S$. 

\begin{figure}[ht]
\begin{subfigure}{0.5\textwidth}   
\centering 
\begin{tabular}{c|c|c|c}
        & $A_1$ & $C_1$ & $B_1$ \\ \hline
    $A_2$    & $\varnothing$ & $\varnothing$ & \\ \hline
    $C_2$ & $\varnothing$ & & \\ \hline
    $B_2$ & & & \\
    \end{tabular}
    \caption{Non-crossing}
\end{subfigure}
\begin{subfigure}{0.5\textwidth}
\centering
        \begin{tabular}{c|c|c|c}
        & $A_1$ & $C_1$ & $B_1$ \\ \hline
    $A_2$    & & $\varnothing$ & \\ \hline
    $C_2$ & $\varnothing$ & & \\ \hline
    $B_2$ & & & \\
    \end{tabular}
    \caption{Loosely non-crossing}
\end{subfigure}
    \caption{Illustrations of two separations $S_1 = (A_1, C_1, B_1)$ and $S_2 = (A_2, C_2, B_2)$ being (a) non-crossing and (b) loosely non-crossing.}
    \label{fig:non-crossing}
\end{figure}

Observe that if $S_1 = (A_1, C_1, B_1)$ and $S_2 = (A_2, C_2, B_2)$ are loosely non-crossing, then every connected component of $A_1 \cup A_2$ is a connected component of $A_1$ or a connected component of $A_2$. If $S_1$ and $S_2$ are $\varepsilon$-skewed, then every connected component of $G[A_1 \cup A_2]$ is ``small'', because $w(A_1), w(A_2) < \varepsilon$. We use this intuition to define a structure called a central bag. From now on, for the remainder of the paper, wherever we refer to a collection of separations $\S$, we will assume that there is a fixed ordering of the separations in $\S$. We then refer to $\S$ as a {\em sequence of separations}. We will explain later how this ordering is obtained. Let $\mathcal{S}$ be a loosely laminar sequence of separations of a graph $G$. The {\em central bag for $\mathcal{S}$}, denoted by $\beta_{\mathcal{S}}$, is defined as follows: 
$$\beta_{\mathcal{S}} = \bigcap_{S \in \mathcal{S}} (B(S) \cup C(S)).$$ 

Note that $V(G) \setminus \beta_\S = \bigcup_{S \in \S} A(S)$. The weight function $w_\S: \beta_\S \to [0, 1]$ is defined as follows. Let $\S = (S_1, \hdots, S_k)$ be a loosely laminar sequence of separations, let $S_i = (A_i, C_i, B_i)$ for every $1 \leq i \leq k$, and for every $S_i$, we assign a vertex $v_i \in$ $C_i$ to be the {\em anchor} of $S_i$. Any vertex of $C_i$ can be the anchor for $S_i$, but our choice of anchor is important and depends on what type of separation $S_i$ is. When we discuss separations in ($C_4$, prism)-free perfect graphs, we will describe how to choose the anchors wisely. In the next lemma, we show that no matter how we choose anchors, $\{v_1, \hdots, v_k\} \subseteq \beta_\S$. We will only consider situations in which every vertex of $G$ is the anchor of at most one separation in $\S$. For $v \in \beta_\S$, $w_\S(v) = w(v) + w(A_i \setminus \bigcup_{1 \leq j < i} A_j)$ if $v = v_i$ for some $i \in \{1, \hdots, k\}$, and $w_\S(v) = w(v)$ otherwise. (We note that this is well-defined by the assumption that every vertex of $G$ is the anchor of at most one separation in $\mathcal{S}$). 
Thus, the anchor of a separation $S$ is a way to record the weight of $A(S)$ in $\beta_\S$. The following lemma gives important properties of central bags. 

\begin{lemma}
Let $\varepsilon > 0$ and let $G$ be a graph. Let $w: V(G) \to [0, 1]$ be a weight function on $V(G)$ with $w(G) = 1$. Let $\mathcal{S}$ be a loosely laminar sequence of separations of $G$ such that $S$ is $\varepsilon$-skewed for all $S \in \S$, and every vertex $v \in V(G)$ is the anchor of at most one separation in $\S$. Let $\beta_\S$ be the central bag for $\S$. Then,

\begin{enumerate}[(i)]
\itemsep -0.2em
\item $C(S) \subseteq \beta_\S$ for every $S \in \S$,
\item if $G$ is connected and $C(S)$ is connected for every $S \in \S$, then $\beta_\S$ is connected,
\item $w_\S(\beta_\S) = 1$ and $w_\S^{\max} \leq w^{\max} + \varepsilon$.
\end{enumerate}
\label{lemma:central_bag_1}
\end{lemma}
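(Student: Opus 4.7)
The plan is to treat (i), (ii), (iii) in order, since (i) feeds directly into the proof of (ii).

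For (i), I would argue directly from the loosely non-crossing hypothesis. Fix $S \in \S$ and take any $S' \in \S$; by definition $A(S') \cap C(S) = \emptyset$, so $C(S) \subseteq V(G) \setminus A(S') = B(S') \cup C(S')$. Intersecting over all $S' \in \S$ gives $C(S) \subseteq \beta_\S$. In particular each anchor $v_i \in C(S_i)$ lies in $\beta_\S$, so $w_\S$ is well-defined on $\beta_\S$.

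For (ii), I would use iterative path-rerouting. Given $u, v \in \beta_\S$ and any $u$–$v$ path $P$ in $G$ (which exists since $G$ is connected), let $r(P) = |V(P) \cap \bigcup_{S \in \S} A(S)|$ count the vertices of $P$ outside $\beta_\S$. If $r(P) = 0$ we are done. Otherwise, pick $S \in \S$ with $V(P) \cap A(S) \neq \emptyset$ and a maximal subpath $Q$ of $P$ contained in $A(S)$. Because $A(S)$ is anticomplete to $B(S)$ and because $u, v \notin A(S)$, each end of $Q$ is adjacent in $P$ to a vertex in $C(S)$ (or is itself $u$ or $v$, which must then already lie in $C(S)$). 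Call the resulting two entry vertices $x, y \in C(S)$. Since $C(S)$ is connected, replace the subwalk of $P$ from $x$ to $y$ by any $x$–$y$ path in $C(S)$ and extract a simple $u$–$v$ path $P'$ from the resulting walk. By (i), $C(S) \subseteq \beta_\S$, so the replacement introduces no vertices in any $A(S')$, giving $r(P') < r(P)$. Iterating terminates in a path inside $\beta_\S$.

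For (iii), I would note that the sets $A_i \setminus \bigcup_{j<i} A_j$ for $i = 1, \ldots, k$ partition $V(G) \setminus \beta_\S$, so
\begin{equation*}
w_\S(\beta_\S) = \sum_{v \in \beta_\S} w(v) + \sum_{i=1}^{k} w\!\left(A_i \setminus \bigcup_{j<i} A_j\right) = w(\beta_\S) + w(V(G) \setminus \beta_\S) = w(G) = 1.
\end{equation*}
For $w_\S^{\max}$, a non-anchor $v \in \beta_\S$ contributes $w(v) \leq w^{\max}$, while an anchor $v_i$ satisfies $w_\S(v_i) \leq w(v_i) + w(A_i) < w^{\max} + \varepsilon$, using $\varepsilon$-skewness of $S_i$ to bound $w(A_i) < \varepsilon$.

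The main obstacle is (ii): the rerouting must not create new vertices outside $\beta_\S$, and this is exactly where (i) is used, by replacing an $A(S)$-segment with a $C(S) \subseteq \beta_\S$-segment. The parts (i) and (iii) are essentially one-line consequences of the definitions and a disjoint-partition counting argument, respectively.
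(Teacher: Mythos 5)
Your argument is correct, and parts (i) and (iii) match the paper's proof almost word for word: (i) follows directly from the loosely-non-crossing hypothesis, and (iii) is the same disjointification-and-telescoping computation with the $\varepsilon$-skewness bound on each $w(A_i)$.

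For (ii) you take a genuinely different route. The paper argues globally: it fixes a connected component $D$ of $\beta_\S$, shows via $C_i \subseteq D$ (using (i) and connectivity of $C_i$) and $N(A_i) \subseteq C_i$ that $D \cup \bigcup_{i \in I} A_i$ is a full connected component of $G$, and then invokes the connectivity of $G$ to conclude $D = \beta_\S$. You instead argue locally by path-rerouting: starting from a $u$--$v$ path in $G$ and iteratively replacing each maximal subpath inside some $A(S)$ by a detour through $C(S) \subseteq \beta_\S$, with the counting function $r(P)$ guaranteeing termination. Both proofs use exactly the same two ingredients ((i) and connectivity of each $C(S)$); the paper's version is slightly shorter and cleaner, while yours is more constructive and makes the mechanism of ``why $\beta_\S$ is connected'' more visible. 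One small remark: your parenthetical ``(or is itself $u$ or $v$, which must then already lie in $C(S)$)'' addresses a case that cannot actually occur --- the ends of $Q$ lie in $A(S)$, while $u, v \in \beta_\S$ are disjoint from $A(S)$ by definition of $\beta_\S$ --- so the ends of $Q$ always have a $P$-neighbor on each side, and that neighbor lies in $C(S)$. This does not affect correctness.
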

\begin{proof}
Let $S_i = (A_i, C_i, B_i)$ for all $1 \leq i \leq k$. Since $\S$ is a loosely laminar sequence of separations, we have $C_i \cap A_j = \emptyset$ for all $i, j \in \{1, \hdots, k\}$. Since $G \setminus \beta_\S \subseteq \bigcup_{1 \leq i \leq k} A_i$, it follows that $C_i \cap (G \setminus \beta_\S) = \emptyset$ for all $1 \leq i \leq k$. Therefore, $C_i \subseteq \beta_\S$ for all $1 \leq i \leq k$. This proves (i). 

To prove (ii), suppose that $G$ is connected and $C(S)$ is connected for every $S \in \S$. Let $D$ be a connected component of $\beta_\S$. Let $I = \{i : C_i \cap D \neq \emptyset\}$. Since $C_i \subseteq \beta_\S$ and $C_i$ is connected, it follows that $C_i \subseteq D$ for all $i \in I$. Since $N(A_i) \subseteq C_i$, we deduce that $D \cup \bigcup_{i \in I} A_i$ is a connected component of $G$. Since $G$ is connected, it follows that $D \cup \bigcup_{i \in I} A_i = V(G)$, and so $D = \beta_\S$. This proves (ii). 

For $1 \leq i \leq k$, let $v_i$ be the anchor for $S_i$. Note that
\begin{align*}
\hspace{3cm}
w_\S(\beta_\S)
&= \sum_{v \in \beta_\S \setminus \{v_1, \hdots, v_k\}} w_\S(v) + \sum_{v \in \{v_1, \hdots, v_k\}} w_\S(v) \\
&= \sum_{v \in \beta_\S} w(v) + \sum_{1 \leq i \leq k} w\left(A_i \setminus \bigcup_{1 \leq j < i} A_j\right) \\
&= \sum_{v \in V(G)} w(v),
\end{align*}
where the last equality holds since $V(G) \setminus \beta_\S = \bigcup_{S \in \S} A(S)$. Since $w(G) = 1$, it follows that $w_\S(\beta_\S) = 1$. For every $v \in \beta_\S$, $w_\S(v) \leq w(v) + \max_{1 \leq i \leq k} w(A_i)$. Since $S_i$ is $\varepsilon$-skewed, it follows that $w(A_i) \leq \varepsilon$ for all $1 \leq i \leq k$. Therefore, $w_\S^{\max} \leq w^{\max} + \varepsilon$. This proves (iii). 
\end{proof}

Let $\S$ be a loosely laminar sequence of separations of $G$ and let $\beta_\S$ be the central bag for $\S$. Let $S_1 = (A_1, C_1, B_1)$ and $S_2 = (A_2, C_2, B_2)$ be two separations in $\S$. Suppose $C_1 \cup B_1 \subseteq C_2 \cup B_2$. Since $\beta_\S = \bigcap_{S \in \S} (B(S) \cup C(S))$, it follows that $\beta_{\S \setminus S_2} = \beta_{\S}$. Therefore, $S_2$ is in some sense an ``unnecessary'' member of $\S$. We formalize this notion by defining shields. If $S_1$ and $S_2$ are separations, and $C(S_1) \cup B(S_1) \subseteq C(S_2) \cup B(S_2)$, then $S_1$ is called a {\em shield for $S_2$}. (Equivalently, $S_1$ is a shield for $S_2$ if $A(S_2) \subseteq A(S_1))$.) Later, we will use shields to define a notion of a ``minimal'' sequence of separations.

\section{Star separations}
\label{sec:star_separations}
Let $G$ be a graph and let $w: V(G) \to [0, 1]$ be a weight function on $V(G)$ with $w(G) = 1$. A separation $S = (A, C, B)$ is a {\em star separation} if there exists $v \in C$ such that $C \subseteq N[v]$.

Let $v \in V(G)$. A {\em canonical star separation for $v$}, denoted $S_v = (A_v, C_v, B_v)$, is defined as follows: $B_v$ is a largest weight connected component of $G \setminus N[v]$, $C_v$ is the union of $v$ and every vertex in $N(v)$ with a neighbor in $B_v$, and $A_v = V(G) \setminus (B_v \cup C_v)$. Observe that if $A_v$ is non-empty, then $C_v$ is a star cutset of $G$. We call $v$ the {\em center} of $S_v$. When we decompose using canonical star separations $S_v$, the anchor of $S_v$ is its center $v$. 

Every result in this section has the following common assumptions: \\

\noindent \textbf{Common assumptions for Section 4:} {\em Let $c \in [\frac{1}{2}, 1)$ and let $d, \delta$ be positive integers with $d \geq \delta$. Let $G$ be a graph and $w:V(G) \to [0, 1]$ a weight function on $V(G)$ with $w(G) = 1$. Assume that $G$ has maximum degree $\delta$ and no $d$-bounded
$(w, c)$-balanced separator.} \\

The following lemma shows that in graphs with no bounded balanced separator, canonical star separations are unique. 

\begin{lemma}
\label{lemma:canonical_unique}
For every $v \in V(G)$, the canonical star separation for $v$ is unique.
\end{lemma}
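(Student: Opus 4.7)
The plan is to observe that the only ambiguity in $S_v=(A_v,C_v,B_v)$ lies in the choice of $B_v$: once $B_v$ is fixed, the set $C_v$ is forced (as $v$ together with those neighbors of $v$ having a neighbor in $B_v$), and then $A_v=V(G)\setminus (B_v\cup C_v)$ is also forced. Thus it suffices to show that $G\setminus N[v]$ has a unique connected component of maximum weight.

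To do this, I would first note that $N[v]$ is $d$-bounded: since $G$ has maximum degree $\delta$ and $d\geq \delta\geq 1$, we have $N[v]=N^1[v]\subseteq N^d[v]$. In particular, any separation of $G$ whose middle set is $N[v]$ is $d$-bounded, so by Lemma~\ref{lemma:skewed} it is $(1-c)$-skewed.

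Let $D_1,\ldots,D_t$ denote the connected components of $G\setminus N[v]$, ordered so that $w(D_1)\geq w(D_2)\geq \cdots$. If $w(D_1)<1-c$, then every component of $G\setminus N[v]$ has weight strictly less than $1-c\leq \frac{1}{2}\leq c$, so $N[v]$ would itself be a $d$-bounded $(w,c)$-balanced separator, contradicting the hypothesis on $G$; in particular, this case also rules out the degenerate possibility that $G\setminus N[v]$ is empty. So I may assume $w(D_1)\geq 1-c$. At this point I would apply Lemma~\ref{lemma:skewed} to the $d$-bounded separation $\bigl(V(G)\setminus(D_1\cup N[v]),\,N[v],\,D_1\bigr)$: since the $D_1$-side has weight at least $1-c$, skewness forces $w(V(G)\setminus(D_1\cup N[v]))<1-c$, and therefore $w(D_2)<1-c\leq w(D_1)$. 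Hence $D_1$ is strictly heavier than every other component, so $B_v$, and therefore $S_v$, is uniquely determined.

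I do not anticipate a real obstacle; the main care is to explicitly exhibit the $d$-bounded separation to which Lemma~\ref{lemma:skewed} is applied in each case, and to observe that the first case simultaneously yields the desired contradiction and eliminates the degenerate possibility $V(G)=N[v]$.
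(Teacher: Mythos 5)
Your proof is correct and follows essentially the same approach as the paper: reduce uniqueness of $S_v$ to uniqueness of the heaviest component of $G\setminus N[v]$, and use the absence of a $d$-bounded $(w,c)$-balanced separator together with $c\geq\frac{1}{2}$. The paper's version is slightly more direct — it observes immediately that some component $B$ must satisfy $w(B)>c\geq\frac{1}{2}$, whence every other component has weight $<\frac{1}{2}<w(B)$ — whereas you route the argument through two applications of Lemma~\ref{lemma:skewed}, but the underlying idea and the conclusion are identical.
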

\begin{proof}
Let $v \in V(G)$. Since $N[v]$ is $d$-bounded and $G$ has no $d$-bounded $(w, c)$-balanced separator, it follows that there exists a connected component $B$ of $G \setminus N[v]$ such that $w(B) > c$. Since $c \geq \frac{1}{2}$, $B$ is the unique largest weight connected component of $G \setminus N[v]$. Therefore, $B = B_v$. Now, since a canonical star separation for $v$ is uniquely defined by $B_v$, the result follows.
\end{proof}

We say that two vertices $u, v \in V(G)$ are {\em star twins} if $B_u = B_v$, $C_u \setminus \{u\} = C_v \setminus \{v\}$, and $A_u \setminus \{v\} = A_v \setminus \{u\}$ (see Figure \ref{fig:star_twins}). Note that for star twins $u, v$, we have $u \in A_v$ and $v \in A_u$. If $u$ and $v$ are star twins, we also say that their canonical star separations $S_u$ and $S_v$ are {\em star twins}. Recall that $S_v$ is a {\em shield} for $S_u$ if $B_v \cup C_v \subseteq B_u \cup C_u$, and so if $u$ and $v$ are star twins, then $S_v$ is not a shield for $S_u$ and $S_u$ is not a shield for $S_v$. The following lemma characterizes shields of star separations.

\begin{figure}[h]
\begin{center}
\begin{tikzpicture}[scale=0.6]

\node[label=above:{\scriptsize{$u$}}, inner sep=2pt, fill=black, circle] at (-6.5, 0.5)(u){};
\node[label=above:{\scriptsize{$v$}}, inner sep=2pt, fill=black, circle] at (-3, 0.5)(v){};

\tikzset{decoration={snake,amplitude=.4mm,segment length=2mm,
                       post length=0mm,pre length=0mm}}
\draw[decorate] (u) -- (v);

\draw (v) -- (-3.5,-1.2);
\draw (v) -- (-3,-1.2);
\draw (v) -- (-2.5,-1.2);

\draw (u) -- (-3.7,-1.4);
\draw (u) -- (-3.1,-1.4);
\draw (u) -- (-2.5,-1.4);

\draw (-6,-1.5) -- (-6,1.5) arc(90:270:1.5) --cycle;

\draw (0,1.5) -- (0,-1.5) arc(-90:90:1.5) --cycle;

\draw (-1.5,-1) -- (-4.5,-1) arc(180:360:1.5) --cycle;

\coordinate[label={\scriptsize{$A_v$}}] (E) at (-6.5,-2.5);
\coordinate[label={\scriptsize{$C_v \setminus \{v\} = C_u \setminus \{u\}$}}] (F) at (-3,-3.5);
\coordinate[label={\scriptsize{$B_v = B_u$}}] (F) at (0.75,-2.5);

\end{tikzpicture}
\end{center}
\vspace{-0.6cm}
\caption{Star twins $u$ and $v$, where $u$ and $v$ may or may not be adjacent}
\label{fig:star_twins}
\end{figure}

\begin{lemma}
\label{lemma:star_shields}
Let $u, v \in V(G)$, and suppose $u \in A_v$. Then, either $S_u$ and $S_v$ are star twins, or $S_v$ is a shield for $S_u$.
\end{lemma}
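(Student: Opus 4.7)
The plan is to first establish $B_v \subseteq B_u$, and then to split on whether $v \in A_u$. For the first step, I would show $N[u] \cap B_v = \emptyset$. The assumption $u \in A_v$ gives $u \notin B_v$ directly. To rule out a neighbor of $u$ lying in $B_v$: if $u$ is adjacent to $v$, then $u \notin C_v$ combined with the definition of $C_v$ forces $u$ to have no neighbor in $B_v$; if $u$ is not adjacent to $v$, then $u$ lies in some component of $G \setminus N[v]$ distinct from $B_v$, and any neighbor of $u$ in $B_v$ would merge the two components, contradicting $u \notin B_v$. Hence $B_v$ is connected and disjoint from $N[u]$, so it lies in a single connected component $B'$ of $G \setminus N[u]$.

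Next, by the common assumptions $G$ has no $d$-bounded $(w,c)$-balanced separator while $N[v]$ is $d$-bounded, so $N[v]$ fails to be such a separator and the unique maximum-weight component of $G \setminus N[v]$, namely $B_v$, satisfies $w(B_v) > c \geq \tfrac{1}{2}$. Hence $w(B') > \tfrac{1}{2}$. Since at most one component of $G \setminus N[u]$ can have weight exceeding $\tfrac{1}{2}$ and $B_u$ is the unique maximum, $B' = B_u$, and therefore $B_v \subseteq B_u$.

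Suppose first that $v \notin A_u$. I claim that $S_v$ is a shield for $S_u$, i.e., $B_v \cup C_v \subseteq B_u \cup C_u$. Since $B_v \subseteq B_u$ is already shown, take $x \in C_v$. If $x = v$, then $x \in B_u \cup C_u$ by the case assumption. Otherwise $x$ has a neighbor in $B_v \subseteq B_u$, and since $B_u$ is a connected component of $G \setminus N[u]$, every vertex adjacent to $B_u$ lies in $B_u \cup N[u]$; in the latter alternative the definition of $C_u$ places $x \in C_u$. Either way, $x \in B_u \cup C_u$.

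Suppose instead $v \in A_u$. Interchanging the roles of $u$ and $v$ in the argument above yields $B_u \subseteq B_v$, so $B_u = B_v$. To conclude that $S_u$ and $S_v$ are star twins it remains to show $C_u \setminus \{u\} = C_v \setminus \{v\}$, as $A_u \setminus \{v\} = A_v \setminus \{u\}$ then follows by complementing in $V(G)$. By symmetry take $x \in C_v \setminus \{v\}$: then $x \in N(v)$, so $x \notin B_v = B_u$, and $x$ has a neighbor in $B_u$. We cannot have $x = u$, for then $u \in N(v)$ would have a neighbor in $B_v$, contradicting $u \in A_v$ (and hence $u \notin C_v$). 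Therefore $x \in N[u] \setminus \{u\} = N(u)$, and combined with $x \in N(B_u)$ this gives $x \in C_u \setminus \{u\}$. The main conceptual point, which is the likely obstacle, is recognizing that the dichotomy $v \in A_u$ versus $v \notin A_u$ neatly separates the two conclusions, and that the weight bound $w(B_v) > \tfrac{1}{2}$ supplied by the no-balanced-separator hypothesis is precisely what forces $B_v \subseteq B_u$.
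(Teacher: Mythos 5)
Your proof is correct and follows essentially the same route as the paper: the crux in both is that $B_v$ is connected, disjoint from $N[u]$, and has weight exceeding $c\geq\tfrac12$ (by the no-balanced-separator hypothesis), which forces $B_v\subseteq B_u$, after which the conclusion is routine set-chasing. The only cosmetic difference is how the dichotomy is organized — you split on whether $v\in A_u$ and invoke symmetry to get $B_u=B_v$ in the twin case, whereas the paper splits on whether $C_v\setminus\{v\}$ meets $B_u$ — but the content is the same.
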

\begin{proof}
Since $u$ is anticomplete to $B_v$, we have $B_v \subseteq G \setminus N[u]$. Since $N[v]$ is $d$-bounded and $G$ has no $d$-bounded $(w, c)$-balanced separator, it follows that $w(B_v) > c$, so $B_v \subseteq B_u$. Let $x \in C_v \setminus \{v\}$, so $x$ has a neighbor in $B_v$ and thus in $B_u$. If $x \in N[u]$, then $x \in C_u$. If $x \not \in N[u]$, then $x \in B_u$. It follows that $C_v \setminus \{v\} \subseteq C_u \cup B_u$. 

Suppose there exists $y \in C_v \setminus \{v\}$ such that $y \in B_u$. Then, since $v$ is adjacent to $y$, we have $v \in B_u \cup C_u$, and so $S_v$ is a shield for $S_u$. Therefore, we may assume $(C_v \setminus \{v\}) \cap B_u = \emptyset$, so $C_v \setminus \{v\} \subseteq C_u$. Since $C_v \setminus \{v\} \subseteq C_u$ and by definition $N(B_v) = C_v \setminus \{v\}$, it follows that $B_v = B_u$ and $C_v \setminus \{v\} = C_u \setminus \{u\}$. Then, $A_v \setminus \{u\} = A_u \setminus \{v\}$. Therefore, $u$ and $v$ are star twins. 
\end{proof}

Lemma \ref{lemma:star_shields} allows us to define a useful relation on $V(G)$. For the remainder of the paper we fix an ordering $\smallO$ of $V(G)$. Let $\leq_A$ be a relation on $V(G)$ defined as follows: 
\begin{equation*}
\hspace{2.5cm}
x \leq_A y \ \ \ \text{ if} \ \ \  
\begin{cases} x = y, \text{ or} \\ 
\text{$x$ and $y$ are star twins and $\smallO(x) < \smallO(y)$, or}\\ 
\text{$x$ and $y$ are not star twins and } y \in A_x.\\
\end{cases}
\end{equation*} 

\begin{lemma}
\label{lemma:partial-order}
The ordering $\leq_A$ is a partial order on $V(G)$. 
\end{lemma}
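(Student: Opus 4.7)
The plan is to verify reflexivity, antisymmetry and transitivity separately, appealing to Lemma~\ref{lemma:star_shields} whenever the ``not star twins'' clause of the definition appears. Reflexivity is immediate from the first clause.

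For antisymmetry, I would take $x \leq_A y$ and $y \leq_A x$ with $x \neq y$ and case-split on the witnessing clauses. If both directions use the star-twin clause, then the definition forces both $\smallO(x) < \smallO(y)$ and $\smallO(y) < \smallO(x)$, a contradiction. Otherwise at least one direction uses the third clause, so $x$ and $y$ are not star twins, and we have $y \in A_x$ and $x \in A_y$. Applying Lemma~\ref{lemma:star_shields} to $y \in A_x$ gives that $S_x$ is a shield for $S_y$, i.e.\ $C_x \cup B_x \subseteq C_y \cup B_y$, and since $x \in C_x$ this forces $x \in C_y \cup B_y$, contradicting $x \in A_y$.

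For transitivity I would take $x \leq_A y \leq_A z$ with $x, y, z$ distinct and split on the four combinations of witnessing clauses. The key preliminary is that the ``star twins'' relation is itself transitive: from $x, y$ star twins and $y, z$ star twins, the equalities $B_x = B_y = B_z$ and $C_x\setminus\{x\} = C_y\setminus\{y\} = C_z\setminus\{z\}$ are immediate, and the remaining identity $A_x\setminus\{z\} = A_z\setminus\{x\}$ follows from $A_v = V(G)\setminus(C_v\cup B_v)$ after verifying $x\notin C_z\cup B_z$ and $z\notin C_x\cup B_x$; this verification uses $x\in A_y$ to conclude $x\notin (C_y\setminus\{y\})\cup B_y = (C_z\setminus\{z\})\cup B_z$ together with $x\neq z$, and symmetrically for $z$. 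Combined with monotonicity of $\smallO$, this settles the pure star-twin case. When one step is a star-twin step and the other uses the third clause, the twin identity $A_u\setminus\{v\} = A_v\setminus\{u\}$ transports the $A$-containment across the twin pair to give $z\in A_x$; the star-twin transitivity just proved prevents $x, z$ from being star twins, so the third clause applies. When both steps use the third clause, Lemma~\ref{lemma:star_shields} yields $C_x\cup B_x \subseteq C_y\cup B_y \subseteq C_z\cup B_z$; using $z\in A_y$ this forces $z\in A_x$, while $x\in C_x\subseteq C_z\cup B_z$ shows $x\notin A_z$, so by Lemma~\ref{lemma:star_shields} again $x, z$ are not star twins and the third clause applies.

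The main obstacle I expect is the bookkeeping of the three defining star-twin identities through the case analysis, and in particular proving that ``star twins'' is a transitive relation; once this is in hand, the remaining cases reduce to tracking which of $A_v$, $C_v$, $B_v$ each relevant vertex lies in for each canonical star separation, and applying Lemma~\ref{lemma:star_shields} to convert between membership in $A_v$ and the shield inclusion $C_v\cup B_v \subseteq C_u\cup B_u$.
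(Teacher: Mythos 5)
Your proposal is correct and follows essentially the same route as the paper: reflexivity is immediate, antisymmetry comes from combining $y \in A_x$ and $x \in A_y$ with Lemma~\ref{lemma:star_shields}, and transitivity is a case analysis driven by Lemma~\ref{lemma:star_shields} and the star-twin identities. The only organizational difference is that the paper avoids proving star-twin transitivity as a standalone preliminary: it first derives $z \in A_x$ uniformly from the inclusion $A_y \setminus \{x\} \subseteq A_x$ (which Lemma~\ref{lemma:star_shields} yields in both the twin and the shield case), and then, in the subcase where $x$ and $z$ are star twins, deduces that all three pairs are star twins and chains the $\smallO$-inequalities. One small imprecision in your mixed case: when $x \leq_A y$ holds via the third clause and $y, z$ are star twins, the twin identity $A_y \setminus \{z\} = A_z \setminus \{y\}$ does not by itself transport $y \in A_x$ to $z \in A_x$; there you need the shield inclusion $B_x \cup C_x \subseteq B_y \cup C_y$ from Lemma~\ref{lemma:star_shields} (giving $A_y \subseteq A_x$) together with the observation that star twins lie in each other's $A$-sides, exactly as in your double-third-clause case. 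That observation ($u,v$ star twins implies $v \in A_u$) is also used implicitly several times and deserves a one-line verification.
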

\begin{proof}
We show that $\leq_A$ is reflexive, antisymmetric, and transitive. By definition, $\leq_A$ is reflexive. Suppose $x \leq_A y$ and $y \leq_A x$ for some $x, y \in V(G)$ with $x \neq y$. Since $x \leq_A y$, it follows that $y \in A_x$, and since $y \leq_A x$, it follows that $x \in A_y$. Hence, $A_x \not\subseteq A_y$ and $A_y \not\subseteq A_x$, and so $S_x$ and $S_y$ are not shields for each other. Thus, by Lemma \ref{lemma:star_shields}, $x$ and $y$ are star twins. But $x \leq_A y$ implies that $\smallO(x) < \smallO(y)$, and $y \leq_A x$ implies that $\smallO(y) < \smallO(x)$, a contradiction. Therefore, $\leq_A$ is antisymmetric. 

Suppose $x \leq_A y$ and $y \leq_A z$ for $x, y, z \in V(G)$ distinct. Since $y \leq_A z$ and $y \neq z$, it follows that $z \in A_y$ (note that $z \in A_y$ if $y$ and $z$ are star twins). Similarly, since $x \leq_A y$ and $x \neq y$, it follows that $y \in A_x$, so by Lemma \ref{lemma:star_shields}, $A_y \setminus \{x\} \subseteq A_x$. Therefore, $z \in A_x$. If $x$ and $z$ are not star twins, then $x \leq_A z$, so assume $x$ and $z$ are star twins. 

Since $x$ and $z$ are star twins, $A_x \setminus \{z\} = A_z \setminus \{x\}$. Since $y \in A_x$, it follows that $y \in A_z$. Since $A_y \not\subseteq A_z$, by Lemma \ref{lemma:star_shields}, $y$ and $z$ are star twins and $\smallO(y) < \smallO(z)$. Since $y$ and $z$ are star twins, $A_y \setminus \{z\} = A_z \setminus \{y\}$. Since $x \in A_z$, it follows that $x \in A_y$, so $x$ and $y$ are star twins and $\smallO(x) < \smallO(y)$. Therefore, $\smallO(x) < \smallO(z)$, and $x \leq_A z$, so $\leq_A$ is transitive.
\end{proof}

Let $X \subseteq V(G)$ be the set of minimal vertices with respect to $\leq_A$, and let $\S = \{S_x : x \in X\}$ be the set of canonical star separations with centers in $X$. We call $X$ the {\em $\smallO$-star covering} of $G$, and the ordering of $\S$ by the order of the centers of the separations in $\S$ with respect to $\smallO$ the {\em $\smallO$-star covering sequence} of $G$. In fact, we will assume that every collection of canonical star separations is ordered by the order of its centers with respect to $\smallO$.  The {\em dimension of $\S$}, denoted $\dim(\S)$, is the minimum $k$ such that $\S$ can be partitioned into $k$ loosely laminar sequences. The following two lemmas show that $\dim(\S)$ is bounded above by the chromatic number of $X$. 

\begin{lemma}
\label{lemma:loosely_noncrossing}
Let $x, y \in V(G)$ be such that $x$ and $y$ are non-adjacent and incomparable with respect to $\leq_A$. Then, $S_x$ and $S_y$ are loosely non-crossing.
\end{lemma}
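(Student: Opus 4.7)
The plan is to argue by contradiction. By symmetry, it suffices to rule out $A_x \cap C_y \neq \emptyset$, so I will assume that some $z \in A_x \cap C_y$ exists and derive that $x$ and $y$ are $\leq_A$-comparable, contradicting the hypothesis.

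The key step is to show that $y \in A_x$. If $z = y$ this is immediate. Otherwise $z \in C_y \setminus \{y\}$, so by the definition of a canonical star separation $z$ is a neighbor of $y$; since $z \in A_x$ and $A_x$ is anticomplete to $B_x$, every neighbor of $z$ lies in $A_x \cup C_x$, forcing $y \in A_x \cup C_x$. The non-adjacency of $x$ and $y$ combined with $x \neq y$ (which follows from $\leq_A$-incomparability) and the inclusion $C_x \subseteq N[x]$ exclude $y \in C_x$, so $y \in A_x$, as claimed.

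Finally, $y \in A_x$ together with $x \neq y$ makes $x$ and $y$ comparable under $\leq_A$ directly from its definition: if $x$ and $y$ are star twins, then the tie-breaking order $\smallO$ yields a comparison, and otherwise $x \leq_A y$ holds by the clause governing non-twins. Either outcome contradicts the assumed incomparability. The argument for $A_y \cap C_x = \emptyset$ is identical after interchanging the roles of $x$ and $y$. I do not anticipate any real obstacle; the proof is a careful unpacking of definitions, and the only subtle points are separating out the case $z = y$ and recognizing that the anticompleteness of $A_x$ and $B_x$ is precisely what confines $y$ to $A_x$.
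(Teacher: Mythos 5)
Your proof is correct and, despite being framed as a contradiction argument, uses exactly the same ingredients as the paper's direct proof (the inclusion $C_v \subseteq N[v]$, the anticompleteness of $A_v$ and $B_v$, and the fact that $y \in A_x$ forces $x$ and $y$ to be $\leq_A$-comparable). The paper instead observes directly that incomparability plus non-adjacency puts $x \in B_y$ and $y \in B_x$, whence $C_x \subseteq N[x] \subseteq B_y \cup C_y$; this is the same reasoning run forward rather than by contradiction.
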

\begin{proof}
Since $x$ and $y$ are incomparable with respect to $\leq_A$, it follows that $x \not \in A_y$ and $y \not \in A_x$. Since $x$ and $y$ are non-adjacent, it follows that $x \in B_y$ and $y \in B_x$. Since $A_y$ is anticomplete to $B_y$, it follows that $C_x \subseteq B_y \cup C_y$, so $C_x \cap A_y = \emptyset$. By symmetry, $C_y \cap A_x = \emptyset$. Therefore, $S_x$ and $S_y$ are loosely non-crossing.
\end{proof}

\begin{lemma}
Let $X$ be the $\smallO$-star covering of $G$, and let $\S$ be the $\smallO$-star covering sequence of $G$. Then, $\dim(\S) \leq \chi(X)$.
\label{lemma:dim_equals_chi}
\end{lemma}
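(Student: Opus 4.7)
The plan is to construct a partition of $\S$ into $\chi(X)$ loosely laminar subsequences directly from a proper coloring of the induced subgraph $G[X]$.

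The first step I would take is to record an immediate consequence of minimality: any two distinct $x,y\in X$ are incomparable with respect to $\leq_A$. Indeed, if $x\leq_A y$ with $x\neq y$, then $y$ would fail to be minimal with respect to $\leq_A$, contradicting $y\in X$.

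Next, I would fix a proper coloring $\varphi:X\to\{1,\ldots,\chi(X)\}$ of $G[X]$, and for each color $i$ set $X_i=\varphi^{-1}(i)$ and $\S_i=\{S_x:x\in X_i\}$, ordered by the $\smallO$-order of the centers. Then $(\S_1,\ldots,\S_{\chi(X)})$ partitions $\S$. For any two distinct $x,y\in X_i$, the vertices $x$ and $y$ are non-adjacent in $G$ (since $X_i$ is an independent set of $G[X]$) and incomparable under $\leq_A$ (by the first step), so Lemma \ref{lemma:loosely_noncrossing} yields that $S_x$ and $S_y$ are loosely non-crossing. Hence each $\S_i$ is a loosely laminar sequence.

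This partitions $\S$ into $\chi(X)$ loosely laminar sequences, which gives $\dim(\S)\leq \chi(X)$. There is no real obstacle here beyond assembling two ingredients: minimality of elements of $X$ forces pairwise incomparability under $\leq_A$, and a proper coloring of $G[X]$ then delivers precisely the non-adjacency condition needed to invoke Lemma \ref{lemma:loosely_noncrossing} within each color class.
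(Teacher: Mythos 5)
Your proof is correct and follows essentially the same route as the paper's: partition $X$ by a proper coloring of $G[X]$ into independent sets, note that $X$ is an antichain of $\leq_A$ (you spell out the one-line reason, which the paper simply asserts), and apply Lemma~\ref{lemma:loosely_noncrossing} within each color class. No substantive difference.
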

\begin{proof}
Let $\chi(X) = k$, let $X_1, \hdots, X_{k}$ be a partition of $X$ into independent sets, and for all $1 \leq i \leq k$, let $\S_i$ be the set of separations of $\S$ with centers in $X_i$. Let $u, v \in X_i$. Since $X$ is an antichain of $\leq_A$, it follows that $u$ and $v$ are incomparable with respect to $\leq_A$. By Lemma \ref{lemma:loosely_noncrossing}, $S_u$ and $S_v$ are loosely non-crossing. It follows that $\S_i$ is loosely laminar. Therefore, $\S_1, \hdots, \S_{k}$ is a partition of $\S$ into $k$ loosely laminar sequences, so $\dim(\S) \leq k$. 
\end{proof}

Lemma \ref{lemma:dim_equals_chi} shows that the dimension of the star covering sequence of a graph is bounded by its chromatic number. Therefore, the following is an immediate corollary of Lemma \ref{lemma:dim_equals_chi}.

\begin{lemma}
Let $\S$ be the $\smallO$-star covering sequence of $G$. Then, $\dim(S) \leq \delta + 1$. 
\label{lemma:dimS}
\end{lemma}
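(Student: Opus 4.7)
The result is a short corollary of Lemma \ref{lemma:dim_equals_chi} together with the standard greedy bound on chromatic number. The plan is: first invoke Lemma \ref{lemma:dim_equals_chi} to reduce the statement to bounding $\chi(X)$, and then use the fact that $G$ has maximum degree $\delta$ to show $\chi(X) \leq \delta + 1$.

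More concretely, by Lemma \ref{lemma:dim_equals_chi} we have $\dim(\S) \leq \chi(X)$, where $\chi(X)$ is the chromatic number of the induced subgraph $G[X]$. Since $X \subseteq V(G)$, the induced subgraph $G[X]$ is a subgraph of $G$, so every vertex of $G[X]$ has degree at most $\delta$ in $G[X]$. Now the standard greedy coloring argument applies: fix any ordering of the vertices of $X$, and color them one by one, assigning to each vertex the smallest color in $\{1,2,\dots,\delta+1\}$ not already used by any of its previously colored neighbors in $G[X]$. Since each vertex has at most $\delta$ neighbors in $G[X]$, at most $\delta$ colors are forbidden at each step, so a color in $\{1,\dots,\delta+1\}$ is always available. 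Thus $\chi(X) \leq \delta + 1$, and combining with Lemma \ref{lemma:dim_equals_chi} yields $\dim(\S) \leq \delta + 1$.

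There is no real obstacle here: the lemma is a one-line consequence of Lemma \ref{lemma:dim_equals_chi} and the trivial bound $\chi(H) \leq \Delta(H) + 1$. The only subtlety worth stating explicitly is that the maximum-degree bound on $G$ is inherited by the induced subgraph $G[X]$, so the greedy bound applies to $G[X]$ rather than needing a separate argument.
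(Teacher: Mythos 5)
Your proof is correct and follows exactly the route the paper intends: Lemma \ref{lemma:dim_equals_chi} gives $\dim(\S) \leq \chi(X)$, and the bound $\chi(X) \leq \delta + 1$ follows from the greedy coloring bound applied to $G[X]$, whose maximum degree is at most that of $G$. The paper states this as an immediate corollary without spelling out the greedy argument, so your write-up just makes explicit what the paper leaves implicit.
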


Now, we extend the idea of central bag from a single loosely laminar sequence of separations to the star covering sequence $\S$. Let $X$ be the $\smallO$-star covering of $G$ and let $\S$ be the $\smallO$-star covering sequence for $G$. The {\em star-free bag of $G$}, denoted $\beta$, is defined as follows:
$$ \beta = \bigcap_{S \in \S} (B(S) \cup C(S)).$$

We will later show that $\beta = X$ under some conditions. Note that if $\S$ is loosely laminar, then $\beta = \beta_\S$. The following lemma states an essential result about central bags of sequences of loosely non-crossing star separations: they have no bounded balanced separator. 
\leqnomode
\begin{lemma}
\label{lemma:central_bag_2}
Let $\S=(S_1, \ldots, S_s)$ be a loosely laminar sequence of star separations such that every $v \in V(G)$ is the center of at most one separation in $\S$, and let $\beta_\S$ be the central bag for $\S$. Then $\beta_\S$ has no $(d-1)$-bounded $(w_\S, c)$-balanced separator.
\end{lemma}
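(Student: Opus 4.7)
The plan is to argue by contradiction. Suppose $X$ is a $(d-1)$-bounded $(w_\S,c)$-balanced separator of $\beta_\S$, with $X \subseteq N^{d-1}_{G[\beta_\S]}[u]$ for some $u \in \beta_\S$; since distances in $G[\beta_\S]$ dominate those in $G$, also $X \subseteq N^{d-1}_G[u]$. From $X$ I will build a $d$-bounded $(w,c)$-balanced separator of $G$, contradicting the hypothesis on $G$. The candidate is
\[
X^* \;=\; X \;\cup\; \bigcup_{\,v_i \in X\,} C(S_i).
\]
Because each $S_i$ is a star separation with centre $v_i$, $C(S_i) \subseteq N[v_i]$, so $X^* \subseteq X \cup N_G(X) \subseteq N^d_G[u]$ and $X^*$ is $d$-bounded. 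Two properties of $X^*$ will drive the rest of the argument: (a)~if $v_i \in X$, then $C(S_i) \subseteq X^*$, so $A(S_i)$ is completely disconnected from $\beta_\S$ in $G \setminus X^*$; and (b)~if $v_i \notin X$ and $D_{j(i)}$ is the component of $\beta_\S \setminus X$ containing $v_i$, then $C(S_i) \setminus X \subseteq D_{j(i)}$, since every vertex of $C(S_i) \setminus \{v_i\}$ is adjacent to $v_i$ in $\beta_\S \setminus X$.

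It then remains to check that every component $K$ of $G \setminus X^*$ has $w(K) \leq c$. If $K \cap \beta_\S = \emptyset$, then $K \subseteq \bigcup_i A(S_i)$ and $K$ is a component of $G[\bigcup_i A(S_i)]$; the observation just before Lemma \ref{lemma:central_bag_1}, iterated along $\S$, shows that every such component is in fact a component of $G[A(S_i)]$ for some single $i$. Hence $K \subseteq A(S_i)$, and since $S_i$ is $d$-bounded, Lemma \ref{lemma:skewed} yields $w(K) \leq w(A(S_i)) < 1 - c \leq c$. The harder case is $K \cap \beta_\S \neq \emptyset$, where the central step is to show that $K \cap \beta_\S$ lies in a single component $D_j$ of $\beta_\S \setminus X$. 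Given a path in $K$ between two vertices of $K \cap \beta_\S$, I decompose it into maximal $\beta_\S$-segments and maximal $A$-segments in $V(G) \setminus \beta_\S$: each $\beta_\S$-segment lies in a single component of $\beta_\S \setminus X^*$ and so in one $D_j$; each $A$-segment, by the iterated observation above, lies in $A(S_i)$ for a single $i$, and its entry/exit vertices in $\beta_\S$ lie in $N(A(S_i)) \subseteq C(S_i)$. These entry/exit vertices are outside $X^*$, which by (a) forces $v_i \notin X$, and then (b) places them in $D_{j(i)}$, so consecutive $\beta_\S$-segments lie in the same $D_j$. A parallel argument, applied to the component in $G[A(S_i)]$ of a vertex $a \in K \setminus \beta_\S$ with $a \in A(S_i)$, shows that its $G$-neighbours in $\beta_\S \setminus X^*$ all lie in $D_{j(i)}$, forcing $v_i \in D_j$. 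The weight accounting is then the routine calculation
\[
w(K) \;\leq\; w(D_j) \;+\; \sum_{i \,:\, v_i \in D_j} w\!\left(A(S_i) \setminus \bigcup_{i' < i} A(S_{i'})\right) \;=\; w_\S(D_j) \;\leq\; c.
\]

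The main obstacle is the confinement $K \cap \beta_\S \subseteq D_j$: it combines the star property $C(S_i) \subseteq N[v_i]$, which controls where $C(S_i) \setminus X$ can sit once $v_i \notin X$, with the loosely laminar property of $\S$, which pins each $A$-segment inside a single $A(S_i)$. Once this confinement and its analogue for $K \setminus \beta_\S$ are in place, the weight bound is only a rewriting using the definition of $w_\S$, and both cases deliver the required $d$-bounded $(w,c)$-balanced separator of $G$, contradicting the hypothesis.
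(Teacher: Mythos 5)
Your proof is correct, and while it shares the core ingredients with the paper's argument---the star property $C(S_i)\subseteq N[v_i]$ to localize cutsets, the loosely laminar property to pin each component of $\bigcup_i A(S_i)$ inside a single $A(S_i)$, and the anchor-based accounting $w_\S(D_j) = w(D_j) + \sum_{i:\,v_i\in D_j} w\bigl(A(S_i)\setminus\bigcup_{i'<i}A(S_{i'})\bigr)$---it packages them differently. The paper's candidate separator is the larger set $N[Y]\cap\beta_\S$ (all neighbours of the separator $Y$ that lie in $\beta_\S$); it then assumes this fails to be balanced, extracts a heavy component $X$, and runs a case analysis ($\mathcal I = \emptyset$, $|\mathcal I|\geq 2$, $|\mathcal I|=1$) to reach a contradiction, with the star property invoked to rule out $|\mathcal I|\geq 2$. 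Your candidate $X^* = X\cup\bigcup_{v_i\in X}C(S_i)$ is a subset of theirs, chosen precisely so that the two cases it forces to disappear (a component with $C(S_i)\subseteq X^*$ cut off, or a cutset straddling two $Q_i$'s) are handled by construction rather than by case analysis. You then verify directly that $X^*$ is balanced. This is cleaner and reads more transparently than the paper's indirect treatment. One place where your write-up understates the work is the "routine" weight calculation: for $K\setminus\beta_\S\subseteq\bigcup_{i:\,v_i\in D_j}\bigl(A(S_i)\setminus\bigcup_{i'<i}A(S_{i'})\bigr)$ to hold, you need that every vertex of a component $K'$ of $K\cap\bigcup_i A(S_i)$ belongs to exactly the same collection of $A(S_{i'})$'s (so $K'$ lands entirely in one block of the disjointification, with anchor in $D_j$); this follows because $C(S_{i'})$ is disjoint from $\bigcup_{i''}A(S_{i''})$ by the loosely laminar condition, but it deserves to be stated explicitly. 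With that filled in, the argument goes through.
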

\begin{proof}
Let $\S = (S_1, \hdots, S_s)$. Suppose $\beta_\S$ has a $(d-1)$-bounded $(w_\S, c)$-balanced separator $Y$.  Since $G$ has no $d$-bounded $(w, c)$-balanced separator, it follows that $N[Y] \cap \beta_\S$ is not a $d$-bounded $(w, c)$-balanced separator of $G$. Since  $N[Y] \cap \beta_\S$ is $d$-bounded, it follows that there exists a connected component $X$ of $G \setminus (N[Y] \cap \beta_\S)$ such that $w(X)>c$. Let $Q_1, \hdots, Q_t$ be the connected components of $\beta_\S \setminus Y$ and let  $D_1, \hdots, D_m$ be the connected components of $G \setminus \beta_\S$. Define $\mathcal{I}=\{i \mid Q_i \cap X \neq \emptyset\}$ and $\mathcal{J}=\{j \mid D_j \cap X \neq \emptyset\}$. Recall (from Section \ref{sec:central_bags}) that $V(G) \setminus \beta_\S = \bigcup_{S \in \S} A(S)$ and that for $S, S' \in \S$, every connected component of $A(S) \cup A(S')$ is either a connected component of $A(S)$ or a connected component of $A(S')$. It follows that for every $1 \leq j \leq m$, there exists $1 \leq k \leq s$ such that $D_j \subseteq A(S_k)$. 
  For $j \in \mathcal{J}$ let $f(j)$ be minimum such that
  $D_j \subseteq A(S_{f(j)})$. Let $S(j)=S_{f(j)}$ and let
  $v(j)$ be the center of $S(j)$.
\begin{equation} 
\mbox{{\it For every $j \in \mathcal{J}$ we have that $D_j \subseteq X$, $C(S(j)) \not \subseteq N[Y] \cap \beta_\S$, and $v(j) \not \in Y$.}}
\label{sepcomp}
\tag{4.7.1}
\end{equation}
{\em Proof of \eqref{sepcomp}}: Since $D_j \cap X \neq \emptyset$, and  $X$ is a connected component of $G \setminus N[Y] \cap \beta_\S$ and $D_j$ is a connected component of $G \setminus \beta_\S$, it follows that $D_j \subseteq X$. Suppose that $C(S(j)) \subseteq N[Y] \cap \beta_\S$. Since  $N(D_j) \subseteq C(S(j)) \subseteq  N[Y] \cap \beta_\S$, and since $X$ is connected, it follows that $X=D_j$. Now, $w(X) = w(D_j) \leq 1-c$ (by Lemma \ref{lemma:skewed} and since $D_j \subseteq A(S_{f(j)})$), a contradiction since $w(X) > c$ and $c \geq \frac{1}{2}$. This proves the second claim of \eqref{sepcomp}. Next, we observe that if $v(j) \in Y$, then $C(S(j)) \subseteq N[Y] \cap \beta_\S$, and thus the third claim of \eqref{sepcomp} follows from the second. This proves \eqref{sepcomp}.\\

Suppose first that $\mathcal{I}=\emptyset$. Then $\mathcal{J} \neq \emptyset$. Let $j \in \mathcal{J}$. Since $(N[Y]\cap \beta_\S) \cup Q_1 \cup \hdots \cup Q_t = \beta_\S$, it follows that $X = D_j$ and so $C(S(j)) \subseteq N[Y] \cap \beta_\S$, contrary to \eqref{sepcomp}.

Next suppose that $|\mathcal{I}| \geq 2$. Then there exist
$i, j \in \mathcal{I}$ and $k \in \mathcal{J}$ such that 
$N(D_k) \cap Q_i \neq \emptyset$ and $N(D_k) \cap Q_j \neq \emptyset$.
Then, $C(S(k)) \cap Q_i \neq \emptyset$ and $C(S(k)) \cap Q_j \neq \emptyset$. Since $v(k)$ is complete to $C(S(k)) \setminus \{v(k)\}$ and $Q_a$ and $Q_b$ are anticomplete to each other for all $1 \leq a, b \leq t$, it follows that $v(k) \not \in Q_\ell$ for any $1 \leq \ell \leq t$. By (i) of Lemma \ref{lemma:central_bag_1}, $v(k) \in \beta_\S$, so it follows that $v(k) \in Y$, contrary to
\eqref{sepcomp}.

We have shown  that $|\mathcal{I}|=1$,
say $\mathcal{I}=\{i\}$. Now let $j \in \mathcal{J}$.
Since $X \cap D_j \neq \emptyset$ and $X$ is connected, it follows that
$C(S(j)) \cap Q_i \neq \emptyset$. Therefore, $v(j) \in Q_i$ or $v(j) \in Y$.
By \eqref{sepcomp} we may assume that $v(j) \in Q_i$.

Note that by definition of $S(j)$, $D_j \cap A(S_k) = \emptyset$ for every $k < f(j)$. So $w_\S(v(j)) = w(v(j)) + w(A(S(j)) \setminus \bigcup_{1 \leq k < f(j)} A(S_k)) \geq w(v(j)) + w(D_j)$. Then, 
$$w_\S(Q_i)  \geq  w(Q_i) + \Sigma_{j \in \mathcal{J} \text {  s.t. } v(j) \in Q_i} w(D_j) \geq w(X)$$
and  we have $w(X) \leq w_\S(Q_i) \leq c$, again a contradiction (where the second inequality holds since $Y$ is a $(w_\S, c)$-balanced separator of $\beta_\S$).
\end{proof}

Together, Lemmas \ref{lemma:central_bag_1} and \ref{lemma:central_bag_2} give four key properties for central bags of sequences of loosely laminar star separations. In the next lemma, we extend the results of Lemmas \ref{lemma:central_bag_1} and \ref{lemma:central_bag_2} to sequences of separations of bounded dimension.

\begin{lemma}
\label{lemma:star_free_bag}
Let $X$ be the $\smallO$-star covering of $G$, let $\S$ be the $\smallO$-star covering sequence of $G$, and suppose $\dim(\S) = k$. Let $\beta$ be star-free bag of $G$. Suppose $G$ is connected and $d \geq k$. Then, the following hold:
\begin{enumerate}[(i)]
\itemsep -0.2em
    \item $X \subseteq \beta$,
    
    \item $\beta$ is connected,
   
    \item there exists a weight function $w_\beta: \beta \to [0, 1]$ with $w_\beta(\beta) = 1$ and $w_\beta^{\max} \leq w^{\max} + (1-c)$,
    
    \item $\beta$ has no $(d-k)$-bounded $(w_\beta, c)$-balanced separator. 
\end{enumerate}
\end{lemma}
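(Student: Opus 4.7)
The plan is to reduce to the single-sequence cases handled by Lemmas \ref{lemma:central_bag_1} and \ref{lemma:central_bag_2}, by iterating over a decomposition of $\S$ into loosely laminar pieces. Using $\dim(\S) = k$, fix a partition $\S = \S_1 \cup \cdots \cup \S_k$ into loosely laminar sequences (which exists by Lemma \ref{lemma:dim_equals_chi}). Set $G_0 = G$, $w_0 = w$, and inductively define $G_i$ to be the central bag of $G_{i-1}$ for the restriction of $\S_i$ to $G_{i-1}$, together with the weight function $w_i$ produced by that central-bag construction. I then want to set $w_\beta := w_k$ and read off all four conclusions.

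First I would verify that the iteration is well-posed. Each center of a separation in $\S_i$ lies in $G_{i-1}$: since $X$ is an antichain for $\leq_A$, any two distinct $u,v \in X$ are incomparable, so $v \in B_u \cup C_u$, and combined with Lemma \ref{lemma:central_bag_1}(i) applied to the earlier sequences this keeps centers alive at every step. Restricting separations from $G$ to an induced subgraph preserves both the separation property and loose non-crossing, and the restriction of a canonical star separation $S_v$ remains a star separation centered at $v$. By Lemma \ref{lemma:skewed}, every canonical star separation is $(1-c)$-skewed, so Lemmas \ref{lemma:central_bag_1} and \ref{lemma:central_bag_2} apply with $\varepsilon = 1-c$. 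An induction on $i$ using Lemma \ref{lemma:central_bag_2} then shows $G_i$ has no $(d-i)$-bounded $(w_i,c)$-balanced separator, and this remains non-degenerate through all $k$ steps thanks to $d \geq k$. Unfolding the definition of central bag gives $G_k = \bigcap_{S \in \S}(B(S) \cup C(S)) = \beta$.

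With the iteration in hand, parts (i), (ii), and (iv) follow almost immediately. For (i), each $v \in X$ is a center of some $S_v \in \S$, hence lies in every intermediate central bag by Lemma \ref{lemma:central_bag_1}(i), and therefore in $\beta$. For (ii), the canonical star cutset $C_v$ is connected since every vertex of $C_v \setminus \{v\}$ is adjacent to $v$; Lemma \ref{lemma:central_bag_1}(ii) then propagates connectivity from $G$ through $G_1, \ldots, G_k = \beta$. Part (iv) is exactly what the iterated application of Lemma \ref{lemma:central_bag_2} gives.

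The main obstacle is (iii), specifically obtaining the sharp bound $w_\beta^{\max} \leq w^{\max} + (1-c)$ rather than the naive $w^{\max} + k(1-c)$ that careless telescoping would yield. The key observation is that for every $v \in X$, the set $A_v$ is disjoint from $X$: indeed, if $u \in X$ with $u \neq v$, then $u$ and $v$ are incomparable under $\leq_A$, and in particular $u \notin A_v$. Consequently, at the unique step $j$ at which $v$ becomes an anchor (the step with $S_v \in \S_j$), the set $A_v \cap G_{j-1}$ contains no vertex previously designated an anchor in any earlier sequence, so $w_{j-1}$ agrees with the original $w$ on that set. The weight added to $v$ at that step is therefore at most $w(A_v)$, which is strictly less than $1-c$ by Lemma \ref{lemma:skewed}; meanwhile vertices in $\beta \setminus X$ are never anchors and retain their original weights throughout. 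This yields $w_\beta^{\max} \leq w^{\max} + (1-c)$, while $w_\beta(\beta) = 1$ follows by telescoping Lemma \ref{lemma:central_bag_1}(iii) across the $k$ steps together with the identity $V(G) \setminus \beta = \bigcup_{S \in \S} A(S)$.
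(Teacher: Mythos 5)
Your proposal is correct and follows essentially the same route as the paper: partition $\S$ into $k$ loosely laminar sequences, iterate the central-bag construction, apply Lemmas \ref{lemma:central_bag_1} and \ref{lemma:central_bag_2} at each step, and control $w_\beta^{\max}$ by noting that each vertex of $X$ serves as an anchor exactly once (the paper tracks this via the invariant $w_i(v)=w(v)$ for $v\notin X_1\cup\cdots\cup X_i$, while you argue directly that $A_v\cap X=\emptyset$; both give the same $w^{\max}+(1-c)$ bound).
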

\begin{proof}
Let $u \in X$. Since $X$ is an antichain of $\leq_A$, it follows that $u \not \in A_v$ for all $v \in X$. Therefore, $u \in B_v \cup C_v$ for every $v \in X$, so $u \in B(S) \cup C(S)$ for every $S \in \S$. It follows that $ u \in \beta$. This proves (i).

Let $\S_1, \hdots, \S_k$ be a partition of $\S$ into loosely laminar sequences (whose orderings are inherited from $\S$), and let $X_1, \hdots, X_k$ be the partition of $X$ into the centers of $\S_1, \hdots, \S_k$. To prove (ii), (iii), and (iv), we will prove inductively that there exists a sequence $\beta_1, \hdots, \beta_k$ with $\beta_i = \bigcap_{S \in \S_1 \cup \hdots \cup \S_i}(B(S) \cup C(S))$, such that $\beta_i$ is connected, there exists a weight function $w_i: \beta_i \to [0, 1]$ such that $w_i(\beta_i) = 1$, $w_i(v) = w(v)$ if $v \not \in X_1 \cup \hdots \cup X_i$, and $w_i(v) \leq w^{\max} + (1-c)$ if $v \in X_1 \cup \hdots \cup X_i$, and $\beta_i$ has no $(d-i)$-bounded $(w_i, c)$-balanced separator. 

Let $S$ be a separation of $G$ and let $H$ be an induced subgraph of $G$. Then, $S \cap H$ is the separation given by $(A(S) \cap H, C(S) \cap H, B(S) \cap H)$. Let $\beta_1$ be the central bag for $\S_1$, so $\beta_1 = \bigcap_{S \in \S_1}(B(S) \cup C(S))$. Let $w_1 = w_{\S_1}$ be the weight function on $\beta_1$. By Lemma \ref{lemma:canonical_unique}, every vertex of $G$ is the anchor of at most one separation in $\S_1$, and by Lemma \ref{lemma:skewed}, every separation of $\S_1$ is $(1-c)$-skewed. Then, by Lemma \ref{lemma:central_bag_1}, $\beta_1$ is connected and $w_1(\beta_1) = 1$, and by Lemma \ref{lemma:central_bag_2}, $\beta_1$ has no $(d-1)$-bounded $(w_1, c)$-balanced separator. Further, $w_1(v) = w(v)$ if $v \not \in X_1$, and $w_1(v) \leq w^{\max} + (1-c)$ if $v \in X_1$. 

Now, for $i < k$, suppose $\beta_i = \bigcap_{S \in \S_1 \cup \hdots \cup \S_i} (B(S) \cup C(S))$, $\beta_i$ is connected, $w_i: \beta_i \to [0, 1]$ is a weight function on $\beta_i$ with $w_i(\beta_i) = 1$, and $\beta_i$ has no $(d-i)$-bounded $(w_i, c)$-balanced separator. Further, suppose $w_i(v) = w(v)$ if $v \not \in X_1 \cup \hdots \cup X_i$, and $w_i(v) \leq w^{\max} + (1-c)$ if $v \in X_1 \cup \hdots \cup X_i$. Let $\S_{i+1} \mid \beta_i = \{S \cap \beta_i \mid S \in \S_{i+1}\}$. Note that by (i), all separations in $\S_{i+1} \mid \beta_i$ are star separations and we keep the same anchors for them as for the separations in $\S_{i+1}$. By Lemma \ref{lemma:canonical_unique}, every vertex of $G$ is the anchor of at most one separation in $\S_{i+1}$, and hence every vertex of $\beta_i$ is the anchor of at most one separation in $\S_{i+1}\mid \beta_i$. It follows that $\S_{i+1} \mid \beta_i$ is a loosely laminar sequence of star separations of $\beta_i$. Let $\beta_{i+1}$ be the central bag for $\S_{i+1} \mid \beta_i$, and let $w_{i+1}$ be the weight function for $\beta_{i+1}$. Then,
\begin{align*}
\hspace{4.5cm}
\beta_{i+1} &= \bigcap_{S \in \S_{i+1}\mid \beta_i} (B(S) \cup C(S)) \\
&= \bigcap_{S \in \S_{i+1}} (B(S) \cup C(S) \cap \beta_i) \\
&= \bigcap_{S \in \S_1 \cup \hdots \cup \S_{i+1}} (B(S) \cup C(S))
\end{align*}
Since $d - i \geq 1$ and $\beta_i$ has no $(d-i)$-bounded $(w_i, c)$-balanced separator, by Lemma \ref{lemma:skewed}, every separation in $\S_{i+1} \mid \beta_i$ is $(1-c)$-skewed, and so by Lemma \ref{lemma:central_bag_1}, $\beta_{i+1}$ is connected and $w_{i+1}(\beta_{i+1}) = 1$. Further, $w_{i+1}(v) = w_i(v)$ if $v \not \in X_{i+1}$, and $w_{i+1}(v) \leq w_i(v) + (1-c)$ if $v \in X_{i+1}$. It follows that $w_{i+1}(v) = w(v)$ if $v \not \in X_1 \cup \hdots \cup X_{i+1}$, and $w_{i+1}(v) \leq w^{\max} + (1-c)$ if $v \in X_1 \cup \hdots \cup X_{i+1}$. By Lemma \ref{lemma:central_bag_2}, since $\beta_i$ does not have a $(d-i)$-bounded $(w_i, c)$-balanced separator, it follows that $\beta_{i+1}$ does not have a $(d-(i+1))$-bounded $(w_{i+1}, c)$-balanced separator. 

Let $w_\beta = w_k$. Then, $\beta$ is connected, $w_\beta(\beta) = 1$ and $w_\beta^{\max} \leq w^{\max} + (1-c)$, and $\beta$ has no $(d-k)$-bounded $(w_\beta, c)$-balanced separator. This proves (ii), (iii), and (iv).
\end{proof}

Next, we show that the star-free bag and the star covering set are equivalent. 

\begin{lemma}
\label{lemma:beta_is_X}
Let $X$ be the $\smallO$-star covering of $G$, let $\S_X$ be the $\smallO$-star covering sequence of $G$, and suppose $d \geq \dim(\S_X)$. Let $\beta$ be the star-free bag of $G$. Assume $G$ is connected. Then, $\beta = X$.  
\end{lemma}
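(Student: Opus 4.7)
The inclusion $X \subseteq \beta$ is already established by Lemma \ref{lemma:star_free_bag}(i) (which does not actually use the connectivity hypothesis), so the plan is to focus entirely on the reverse inclusion $\beta \subseteq X$. I would argue by contradiction: assume there exists some $v \in \beta \setminus X$ and derive a vertex of $\S_X$ that refutes the membership $v \in \beta$.

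First, since $\leq_A$ is a partial order on the finite set $V(G)$ by Lemma \ref{lemma:partial-order}, and $v$ is not minimal (otherwise $v$ would lie in $X$), a descending chain from $v$ must terminate at a minimal element. Hence there exists $u \in X$ with $u <_A v$. By the definition of $\leq_A$, one of the following holds: either $u$ and $v$ are not star twins and $v \in A_u$, or $u$ and $v$ are star twins with $\smallO(u) < \smallO(v)$.

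The heart of the argument is then to check that in both cases $v \in A_u$. The first case is immediate. For the star-twin case, I would unpack the definition of star twins: $B_u = B_v$, $C_u \setminus \{u\} = C_v \setminus \{v\}$, and $A_u \setminus \{v\} = A_v \setminus \{u\}$. Since $v$ is the center of $S_v$, we have $v \in C_v$, and since $C_v$ and $B_v$ are disjoint, $v \notin B_v = B_u$. Furthermore, if $v$ lay in $C_u$, then $v \in C_u \setminus \{u\} = C_v \setminus \{v\}$, which is absurd. Hence $v \notin B_u \cup C_u$, giving $v \in A_u$, as needed.

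Finally, from $v \in A_u = A(S_u)$ and the fact that $S_u \in \S_X$ (since $u \in X$), we obtain $v \notin B(S_u) \cup C(S_u)$, and therefore $v \notin \beta$, a contradiction. This completes the reverse inclusion and hence the proof. I do not expect any serious obstacle; the only place that requires a brief calculation rather than a direct reading-off of definitions is the verification that star twins $u,v$ satisfy $v \in A_u$, and even that follows cleanly from the three set equalities defining star twins.
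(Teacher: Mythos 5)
Your proof follows exactly the paper's approach: invoke Lemma \ref{lemma:star_free_bag}(i) for $X \subseteq \beta$, then for $v \in \beta \setminus X$ find a minimal $u \leq_A v$ in $X$, deduce $v \in A_u$, and contradict $\beta \subseteq B_u \cup C_u$. In fact you supply a detail the paper elides, namely the explicit verification that star twins $u <_A v$ satisfy $v \in A_u$, which is a welcome clarification rather than a deviation.
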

\begin{proof}
By Lemma \ref{lemma:star_free_bag}, $X \subseteq \beta$. Let $v \in \beta$ and suppose $v \not \in X$. Then, there exists $x \in X$ such that $x \leq_A v$, so $v \in A_x$. But $\beta \subseteq B_x \cup C_x$, a contradiction. Therefore, $\beta \subseteq X$. 
\end{proof}

The star-free bag $\beta$ of $G$ has a crucial relationship to the star separations of $G$. This relationship is formalized through the idea of forcers. Let $G$ be a graph, $X \subseteq V(G)$ and $v \in V(G) \setminus X$. We say that $v$ {\em breaks} $X$ if for every connected component $D$ of $G \setminus N[v]$, we have $X \not \subseteq N[D]$. We recall that if $G'$ and $G$ are graphs, we say $H$ is a $G'$-copy in $G$ if $H \subseteq V(G)$ and $H$ is isomorphic to $G'$. A graph $F$ is a \emph{forcer for $G$} if for every $Y \subseteq V(G)$ such that $Y$ is an $F$-copy in $G$, there exists a vertex $v \in Y$ such that $v$ breaks $Y \setminus \{v\}$. Such a vertex $v$ is called an \emph{$F$-center} for $Y$. If $F$ is a forcer for every graph in a hereditary class $\mathcal{C}$, we say that $F$ is a {\em forcer for $\mathcal{C}$}. The following lemma shows how forcers relate to the star-free bag.

\begin{lemma}
\label{lemma:bag_no_forcers}
Let $\beta$ be the star-free bag of $G$.
Let $F$ be a forcer for $G$. Then $\beta$ is $F$-free.
\end{lemma}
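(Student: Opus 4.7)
The plan is to suppose for contradiction that $\beta$ contains an induced copy of $F$, use the forcer property to extract a breaking vertex $v$ within that copy, and then show that the interaction of $v$ with its canonical star separation $S_v=(A_v,C_v,B_v)$ is incompatible with the copy lying inside $\beta$.

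First I would let $Y\subseteq\beta$ be an $F$ in $G$ and apply the definition of forcer to obtain $v\in Y$ that breaks $Y\setminus\{v\}$. The main intermediate step is to establish that $v$ must lie in the $\smallO$-star covering $X$. Indeed, if $v\notin X$ then $v$ is not $\leq_A$-minimal, so there exists $x\in X$ with $x\leq_A v$ and $x\ne v$. Inspecting the definition of $\leq_A$, either $v\in A_x$ directly, or $x$ and $v$ are star twins, in which case the definition of star twins ($B_x=B_v$, $C_x\setminus\{x\}=C_v\setminus\{v\}$, $A_x\setminus\{v\}=A_v\setminus\{x\}$) forces $v\in A_x$ as well (since $v\notin C_v$ or $B_v$ implies $v\notin C_x$ or $B_x$). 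But $S_x\in\S$ gives $\beta\subseteq B_x\cup C_x$, contradicting $v\in\beta\cap A_x$. Hence $v\in X$, $S_v\in\S$, and so $\beta\subseteq B_v\cup C_v$.

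To finish, I would apply the breaking property of $v$ to the specific component $B_v$ of $G\setminus N[v]$: this yields some $y\in Y\setminus\{v\}$ with $y\notin N[B_v]$. On the other hand $y\in Y\subseteq\beta\subseteq B_v\cup C_v$ and $y\ne v$, while by the definition of $C_v$ every vertex of $C_v\setminus\{v\}$ has a neighbor in $B_v$, so $C_v\setminus\{v\}\subseteq N(B_v)$. Thus $y\in B_v\cup N(B_v)=N[B_v]$, the desired contradiction. The argument is quite direct once the identification $v\in X$ is in place; the only mildly subtle point is handling the star-twin edge case in the definition of $\leq_A$, which is dispatched immediately from the definitions.
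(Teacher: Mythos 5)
Your proof is correct and follows essentially the same route as the paper's: locate the $F$-center $v$ relative to the star covering $X$, and use $\beta\subseteq B_x\cup C_x$ for the relevant $x\in X$ to find a vertex of $Y$ outside $\beta$. You simply fill in two details the paper leaves implicit --- the star-twin case of $\leq_A$ and the observation that breaking the component $B_v$ forces a vertex of $Y$ into $A_v$ because $N[B_v]=(B_v\cup C_v)\setminus\{v\}$ --- both of which are handled correctly.
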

\begin{proof}
Let $X$ be the $\smallO$-star covering of $G$ and let $\S$ be the $\smallO$-star covering sequence of $G$. Let $Y$ be an $F$-copy in $G$. We will prove that $Y \not \subseteq \beta$. Let $v$ be an $F$-center for $Y$. Suppose $v \in X$. Then, $\beta \subseteq B_v \cup C_v$. Since $v$ is an $F$-center for $Y$, it follows that $v$ breaks $Y \setminus \{v\}$, and so $A_v \cap Y \neq \emptyset$, and thus $Y \not \subseteq \beta$. Therefore, we may assume that $v \not \in X$. Since $X$ is the set of minimal vertices with respect to the relation $\leq_A$, it follows that there exists $u \in X$ such that $u \leq_A v$. Then, $v \in A_u$. Since $\beta \subseteq B_u \cup C_u$ and $v \in Y$, it follows that $Y \not \subseteq \beta$. 
\end{proof}

Because of Lemma \ref{lemma:bag_no_forcers}, forcers for $G$ restrict the structure of the star-free bag $\beta$. We see the power of forcers in the next section, where we discuss forcers for ($C_4$, prism)-free perfect graphs.

\newpage
\section{Forcers for $(C_4$, prism)-free perfect graphs}
\label{sec:paws_are_forcers}

A {\em paw} is a graph with vertex set $\{v_1, v_2, v_3, v_4\}$ and edge set $\{v_1v_2, v_2v_3, v_3v_4, v_2v_4\}$ (see Figure \ref{fig:paw_graph}). In this section, we prove that paws are forcers for the class of ($C_4$, prism)-free perfect graphs. 

\begin{figure}[h]
\begin{center}
\begin{tikzpicture}[scale=0.5]

\node[label=left:{\small{$v_1$}}, inner sep=2pt, fill=black, circle] at (0, 2)(v1){};
\node[label=left:{\small{$v_2$}}, inner sep=2pt, fill=black, circle] at (0, 0)(v2){};
\node[label=left:{\small{$v_3$}}, inner sep=2pt, fill=black, circle] at (-1, -2)(v3){};
\node[label=right:{\small{$v_4$}}, inner sep=2pt, fill=black, circle] at (1, -2)(v4){};

\draw[black, thick] (v1) -- (v2);
\draw[black, thick] (v2) -- (v3);
\draw[black, thick] (v2) -- (v4);
\draw[black, thick] (v3) -- (v4);

\end{tikzpicture}
\end{center}
\vspace{-0.4cm}
\caption{The paw graph}
\label{fig:paw_graph}
\end{figure}

We start with the following lemma (see Lemma 18 in \cite{ISK4} for a variation of this lemma). This proof of Lemma \ref{lem:three_leaves} originally appeared in \cite{wallpaper}, but we include it here for completeness. 

\begin{lemma}
Let $x_1, x_2, x_3$ be three distinct vertices of a graph $G$. Assume that $H$ is a connected induced subgraph of $G \setminus \{x_1, x_2, x_3\}$ such that $V(H)$ contains at least one neighbor of each of $x_1$, $x_2$, $x_3$, and that $V(H)$ is minimal subject to inclusion. Then, one of the following holds:
\begin{enumerate}[(i)]
\item For some distinct $i,j,k \in  \{1,2,3\}$, there exist $P$ that is either a path from $x_i$ to $x_j$ or a hole containing the edge $x_ix_j$ such that
\begin{itemize}
\item $V(H) = V(P) \setminus \{x_i,x_j\}$, and
\item either $x_k$ has at least two non-adjacent neighbors in $H$ or $x_k$ has exactly two neighbors in $H$ and its neighbors in $H$ are adjacent.
\end{itemize}

\item There exists a vertex $a \in V(H)$ and three paths $P_1, P_2, P_3$, where $P_i$ is from $a$ to $x_i$, such that 
\begin{itemize}
\item $V(H) = (V(P_1) \cup V(P_2) \cup V(P_3)) \setminus \{x_1, x_2, x_3\}$, and 
\item the sets $V(P_1) \setminus \{a\}$, $V(P_2) \setminus \{a\}$ and $V(P_3) \setminus \{a\}$ are pairwise disjoint, and
\item for distinct $i,j \in \{1,2,3\}$, there are no edges between $V(P_i) \setminus \{a\}$ and $V(P_j) \setminus \{a\}$, except possibly $x_ix_j$.
\end{itemize}

\item There exists a triangle $a_1a_2a_3$ in $H$ and three paths $P_1, P_2, P_3$, where $P_i$ is from $a_i$ to $x_i$, such that
\begin{itemize}
\item $V(H) = (V(P_1) \cup V(P_2) \cup V(P_3)) \setminus \{x_1, x_2, x_3\} $, and 
\item the sets $V(P_1)$, $V(P_2)$ and $V(P_3)$ are pairwise disjoint, and
\item for distinct $i,j \in \{1,2,3\}$, there are no edges between $V(P_i)$ and $V(P_j)$, except $a_ia_j$ and possibly $x_ix_j$.
\end{itemize}
\end{enumerate}
\label{lem:three_leaves}
\end{lemma}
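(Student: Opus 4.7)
The plan is to leverage the minimality of $V(H)$ to force $H$ into one of the three stated skeleton shapes. First, for each $i \in \{1,2,3\}$ I would choose a specific neighbor $y_i \in N(x_i) \cap V(H)$ and observe that, by minimality of $V(H)$, the set $V(H)$ equals the vertex set of a minimal connected induced subgraph of $H$ containing $\{y_1, y_2, y_3\}$: any strictly smaller connected induced subgraph containing $y_1, y_2, y_3$ would still contain a neighbor of each $x_i$, contradicting the minimality of $V(H)$. This reduces the statement to classifying minimal connected induced subgraphs on three specified terminals, and then reattaching $x_1, x_2, x_3$.

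Second, I would run the classification by shortest paths. Take a shortest path $Q_{12}$ from $y_1$ to $y_2$ in $H$. If some neighbor of $x_3$ lies on $V(Q_{12})$, then re-choosing $y_3$ to be such a neighbor and re-applying minimality forces $V(H) = V(Q_{12})$; appending $x_1$ and $x_2$ at the endpoints yields either a path or, when $x_1x_2 \in E(G)$, a hole containing the edge $x_1x_2$, which lands us in case (i). The precise description of $x_k$'s neighborhood in $H$ is then extracted from minimality: by the definition of $H$, $x_k$ has at least one neighbor on the path, and any two of its neighbors on the path must either be non-adjacent (otherwise a shorter path between two of them could be used and some interior vertex would become redundant), or be consecutive on the path and exactly two in number — yielding the stated dichotomy.

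Third, if no neighbor of $x_3$ lies on $V(Q_{12})$, let $Q$ be a shortest path in $H$ from $y_3$ to $V(Q_{12})$; by minimality, $V(H) = V(Q_{12}) \cup V(Q)$, and the attachment vertex of $Q$ on $Q_{12}$ is the candidate Steiner point. If the three branches meet at a single vertex $a$ with no further interactions, we obtain the subdivided-claw structure of case (ii). The absence of cross-edges between distinct branches (other than the allowed $x_ix_j$) follows from minimality: any such chord would allow us to re-route and delete an interior vertex of $H$ while preserving both connectivity and the property that $V(H)$ still meets $N(x_i)$ for each $i$. If instead the candidate Steiner vertex $a$ has neighbors on all three branches that themselves form a triangle $a_1a_2a_3$, then $a$ becomes redundant — deleting it leaves $a_1, a_2, a_3$ mutually adjacent, hence still connected, contradicting minimality — so $a$ cannot exist and we land in the triangle-with-three-legs structure of case (iii).

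The main obstacle is the careful bookkeeping needed to verify the adjacency constraints in each case, especially the precise description of $x_k$'s neighborhood in case (i) and the no-cross-edge conditions in (ii) and (iii). Each of these reduces to a minimality-style argument: if an unlisted chord or extraneous vertex were present, one could delete a vertex of $H$ while preserving both connectivity and the existence of a neighbor of each $x_i$ in what remains. Extracting exactly which configurations survive and matching them with the three stated cases is the patient case analysis that forms the bulk of the argument.
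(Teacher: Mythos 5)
Your overall strategy matches the paper's: take a path through $H$ joining neighbors of two of the $x_i$'s, check whether the third vertex already attaches to it (giving outcome (i) or (ii)), and otherwise grow a branch toward the third vertex and analyze how that branch meets the main path. The reduction to minimal connected subgraphs containing fixed terminals $y_1,y_2,y_3$ is sound. But there are two concrete problems. First, in your case (i) you fix the roles in advance: you append $x_1$ and $x_2$ at the ends of $Q_{12}$ and treat $x_3$ as the ``third'' vertex. This can fail: if, say, $x_1$ is adjacent to both ends of $Q_{12}$ while $x_3$ has a unique neighbor at one end, the induced path of outcome (i) must run from $x_3$ to $x_2$, with $x_1$ playing the role of $x_k$. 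Which vertex serves as $x_k$ is dictated by the structure (it is the one with two neighbors, or with a ``deep'' neighbor, on the path), not by your initial labelling; the paper avoids this by taking $P$ to be an induced path from $x_i$ to $x_j$ in $G$ itself, so that $x_i,x_j$ automatically have unique attachments.

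The more serious gap is outcome (iii). The triangle in (iii) arises from a specific trichotomy that your sketch never performs: letting $w$ be the vertex of the branch $Q$ that attaches to the main path $P^*$, either $w$ has a unique neighbor in $P^*$ (outcome (ii)), or $w$ has exactly two neighbors in $P^*$ and they are adjacent (outcome (iii), the triangle being $w$ together with those two neighbors), or $w$ has two non-adjacent neighbors in $P^*$, in which case one shortcuts the subpath of $P^*$ between the extreme neighbors of $w$ through $w$ and contradicts minimality. Your account of (iii) runs backwards: you describe a configuration (a hub $a$ adjacent to a triangle of branch-ends) and show it cannot occur, then assert that case (iii) ``therefore'' holds --- but you never exhibit where the triangle $a_1a_2a_3$ actually comes from, and the configuration you rule out is not the one that needs ruling out. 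You also never explicitly handle the two-non-adjacent-neighbors subcase, which is the one place in the proof where minimality is used to produce a strictly smaller connected subgraph (your blanket ``any chord lets us delete a vertex'' does not automatically apply, since one must check that the deleted vertices carry no needed attachments of $x_1,x_2,x_3$). As written, the proposal does not establish outcome (iii).
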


\begin{proof}
For some distinct $i,j,k \in  \{1,2,3\}$, let $P$ be a path from $x_i$ to $x_j$ with $V(P^*) \subseteq V(H)$ (in the graph where the edge $x_ix_j$ is deleted if it exists). Such a path exists since $x_i$ and $x_j$ have neighbors in $H$ and $H$ is connected. Assume that $x_k$ has neighbors in $P^*$. Then, by the minimality of $V(H)$, we have $V(H) = V(P^*)$. If $x_k$ has two non-adjacent neighbors in $P^*$, or $x_k$ has two neighbors in $P^*$ and its neighbors in $P^*$ are adjacent, then outcome (i) holds. If $x_k$ has a unique neighbor in $P^*$, then outcome (ii) holds. Thus, we may assume that $x_k$ is anticomplete to $P^*$.

Let $Q$ be a path with $Q \setminus \{x_k\} \subseteq V(H)$ from $x_k$ to a vertex $w \in V(H) \setminus V(P)$ (so $x_k \neq w$) with a neighbor in $P^*$. Such a path exists since $x_k$ has a neighbor in $H$, $x_k$ is anticomplete to $P^*$, and $H$ is connected. By the minimality of $V(H)$, we have $V(H) = (V(P) \cup V(Q)) \setminus \{x_1, x_2, x_3\}$ and no vertex of $Q \setminus w$ has a neighbor in $P^*$. Moreover, by the argument of the previous paragraph, we may assume  that $x_i$ and $x_j$ are anticomplete to $Q \setminus \{x_k\}$.

Now, if $w$ has a unique neighbor in $P^*$, then outcome (ii) holds. If $w$ has two neighbors in $P^*$ and its neighbors in $P^*$ are adjacent, then outcome (iii) holds. Therefore, we may assume that $w$ has two non-adjacent neighbors in $P^*$. Let $y_i$ and $y_j$ be the neighbors of $w$ in $P^*$ that are closest in $P^*$ to $x_i$ and $x_j$, respectively. Let $R$ be the subpath of $P^*$ from $y_i$ to $y_j$. Now, the graph $H'$ induced by $\left ((V(P) \cup V(Q)) \setminus V(R^*) \right) \setminus \{x_1, x_2, x_3\}$ is a connected induced subgraph of $G \setminus \{x_1, x_2, x_3\}$ and it contains at least one neighbor of $x_1$, $x_2$, and $x_3$. Moreover, $V(H')  \subset V(H)$ since $V(R^*) \neq \emptyset$. This contradicts the minimality of $V(H)$.
\end{proof}

We say that we can \emph{link a vertex $v$ onto a triangle $a_1a_2a_3$} if there exist three paths $P_1$, $P_2$, $P_3$ such that for $i=1,2,3$, $P_i$ is a path from $v$ to $a_i$, any pair of paths has only $v$ in common, for $1 \leq i < j \leq 3$, $a_ia_j$ is the unique edge of $G$ between $V(P_i)$ and $V(P_j)$, and at least two of $P_1,P_2,P_3$ are of length at least two. Observe that in a perfect graph, no vertex can be linked onto a triangle. 

Recall that for a path $P$ we denote the interior of $P$ by $P^*$. A {\em wheel} $(H, v)$ is a graph that consists of a hole $H$ and a vertex $v$ which has at least three neighbors in $H$. A {\em sector} of $(H, v)$ is a subpath $P$ of $H$ of length at least one such that $v$ is anticomplete to $P^*$ and $v$ is complete to $P \setminus P^*$. A sector is {\em short} if it is of length 1, and {\em long} otherwise. A wheel is {\em odd} if it contains an odd number of short sectors. Observe that perfect graphs do not contain odd wheels.

A wheel $(H, v)$ is a {\em universal wheel} if $v$ is complete to $H$. A {\em line wheel} is a wheel that has exactly four sectors, exactly two of which are short and the two short sectors do not have common vertices. A {\em twin wheel} is a wheel with exactly three sectors, exactly two of which are short. A {\em triangle-free wheel} is a wheel containing no triangles. A {\em proper wheel} is a wheel that is not a universal, twin, or triangle-free wheel. We need the following corollary of Lemma \ref{lem:three_leaves}.

\begin{lemma}
Let $x_1 x_2 x_3$ be a triangle in a prism-free perfect graph $G$. Assume that $H$ is a connected induced subgraph of $G \setminus \{x_1, x_2, x_3\}$ such that $H$ contains at least one neighbor of each of $x_1, x_2, x_3$, that no vertex of $H$ is complete to $\{x_1,x_2,x_3\}$, and that $V(H)$ is minimal subject to inclusion. Then, for some $\{i,j,k\}=\{1,2,3\}$, $H \cup \{x_i,x_j\}$ is a hole $H'$ and $(H',x_k)$ is a proper, or a universal, or a twin wheel. Moreover, if each of $x_1, x_2, x_3$ has a unique neighbor in $H$, then there exists $\ell \in \{i,j\}$ such that $x_{\ell}$ and $x_k$ have a common neighbor in $H$.
\label{lem:triangle}
\end{lemma}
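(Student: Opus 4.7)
The plan is to apply Lemma \ref{lem:three_leaves} to $H$ with distinguished vertices $x_1, x_2, x_3$ and examine its three outcomes, using prism-freeness and perfectness of $G$ plus the fact that $\{x_1, x_2, x_3\}$ is a triangle.

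Outcome (iii) of Lemma \ref{lem:three_leaves} yields a triangle $a_1a_2a_3$ in $H$ and internally disjoint paths $P_i$ from $a_i$ to $x_i$ whose only cross-edges are $a_ia_j$ and possibly $x_ix_j$; since $x_ix_j \in E(G)$ for all distinct $i, j$, this is precisely a prism, contradicting prism-freeness.

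For outcome (ii) we have $a \in V(H)$ and paths $P_i$ from $a$ to $x_i$ with pairwise disjoint interiors, whose only edges between $V(P_i)\setminus\{a\}$ and $V(P_j)\setminus\{a\}$ are possibly $x_ix_j$. Since no vertex of $H$ is complete to $\{x_1, x_2, x_3\}$, $a$ is adjacent to at most two of the $x_i$'s. If $a$ has at most one such neighbor, then at least two of the $P_i$'s have length $\geq 2$, and combined with $x_ix_j \in E(G)$ this is exactly a linking of $a$ onto $x_1x_2x_3$, contradicting perfectness. So $a$ has exactly two neighbors among $\{x_1, x_2, x_3\}$; after relabeling, say $a$ is adjacent to $x_1, x_2$ and non-adjacent to $x_3$. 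Then $P_1, P_2$ are edges and $P_3$ is an induced path $a, v_1, \ldots, v_t, x_3$ with $t \geq 1$, so $V(H) = \{a, v_1, \ldots, v_t\}$; moreover the cross-edge constraints together with the inducedness of $P_3$ force $a$ to be the unique neighbor of each of $x_1, x_2$ in $H$, and $v_t$ to be the unique neighbor of $x_3$ in $H$. Setting $H' := H \cup \{x_1, x_3\}$, one checks $H'$ is the induced cycle $x_1, a, v_1, \ldots, v_t, x_3, x_1$, and $x_2$ has on $H'$ exactly the neighbors $\{x_1, a, x_3\}$, giving the three sectors $x_1a$, $a, v_1, \ldots, v_t, x_3$, and $x_3x_1$, two of them short. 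So $(H', x_2)$ is a twin wheel, and the conclusion holds with $\{i, j\} = \{1, 3\}$ and $k = 2$.

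Outcome (i) provides a path (or hole) $P$ with $V(H) = V(P) \setminus \{x_i, x_j\}$. Since $x_ix_j \in E(G)$ and $P$ is induced (in $G$ with the edge $x_ix_j$ removed, in the path subcase), $H' := H \cup \{x_i, x_j\}$ is a hole. On $H'$, $x_k$ is adjacent to $x_i$ and $x_j$ and, by outcome (i), to at least two further vertices in $H$; hence $(H', x_k)$ is a wheel with at least four spokes. The triangle $x_ix_jx_k$ precludes it from being triangle-free, so it must be proper, universal, or twin. For the moreover part, if each $x_r$ has a unique neighbor in $H$, then outcome (i) fails (it requires $x_k$ to have $\geq 2$ neighbors in $H$) and outcome (iii) is already excluded; thus outcome (ii) applies with $a$ adjacent to exactly two $x_i$'s, and in the labeling $\{i,j\}=\{1,3\}, k=2$ above, $x_i = x_1$ and $x_k = x_2$ share $a$ as a common neighbor in $H$, giving $\ell = i \in \{i, j\}$. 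The main delicacy is the bookkeeping in outcome (ii): verifying that the "at most one $x_i$-neighbor" subcase matches the definition of a linking, and that the "exactly two $x_i$-neighbors" subcase produces precisely a twin wheel after the correct choice of $\{i, j, k\}$.
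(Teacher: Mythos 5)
Your proof is correct and follows essentially the same approach as the paper: apply Lemma~\ref{lem:three_leaves}, rule out outcome (iii) via prism-freeness, rule out the two-long-paths subcase of outcome (ii) via linking (perfectness), show the remaining subcase of outcome (ii) gives a twin wheel, and show outcome (i) gives a proper or universal wheel; the ``moreover'' part follows because outcome (i) forces $x_k$ to have at least two neighbors in $H$. The only cosmetic differences are the labeling in outcome (ii) and that you state the slightly weaker ``proper, universal, or twin'' in outcome (i) where the paper notes the wheel has too many spokes to be a twin wheel.
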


\begin{proof}
We apply Lemma \ref{lem:three_leaves} and follow the notation in its statement. If the first outcome of Lemma~\ref{lem:three_leaves} holds, then $H \cup \{x_i,x_j\}$ is a hole, call it $H'$, and $(H',x_k)$ is a proper or a universal wheel. (Note that in this case $x_k$ has at least two neighbors in $H$, and so the second part of Lemma~\ref{lem:triangle} does not apply.) If the third outcome of Lemma~\ref{lem:three_leaves} holds, then $V(H) \cup \{x_1, x_2, x_3\}$ is a prism in $G$, a contradiction since $G$ is prism-free. Suppose the second outcome of Lemma~\ref{lem:three_leaves} holds. If at least two of the paths $P_1, P_2, P_3$ have length at least two, then we can link $a$ onto the triangle $x_1x_2x_3$ via the paths $P_1, P_2, P_3$, contradicting that $G$ is perfect. Thus, we may assume that $|V(P_1)| = |V(P_2)| = 2$. Since no vertex of $H$ is complete to $\{x_1,x_2,x_3\}$, we deduce that $a$ is not complete to $\{x_1, x_2, x_3\}$, and therefore $P_3$ has length at least two. Then, $V(H) = V(P_3) \setminus \{x_3\}$, $a \dd P_3 \dd x_3 \dd x_2 \dd a$ is a hole, call it $H'$, and $(H', x_1)$ is a twin wheel. (Note that in this case $x_1$ and $x_2$ have a common neighbor in $H$, namely $a$.)
\end{proof}

Let $(H, v)$ be a wheel that contains at least one triangle. A {\em segment} of $(H, v)$ is a maximal subpath $Q$ of $H$ such that $Q\cup \{ v\}$ is a triangle-free graph. Note that a segment may have length zero. Two segments $Q$ and $Q'$ are {\em adjacent} if $Q\cup Q' \cup \{v\}$  contains at least one triangle.

Let $(H, v)$ be a proper wheel that is not odd. A {\em bicoloring} of $(H, v)$ is a partition of vertices of $H$ into non-empty sets $R$ and $B$ (the red and blue vertices) so that the vertices in the same segment have the same color, while vertices in adjacent segments have distinct colors. The following result follows from Theorem 4.5 in \cite{CCV}.

\begin{theorem}[\cite{CCV}]
Let $G$ be a ($C_4$, prism)-free perfect graph. Let $(H, v)$ be a proper wheel in $G$ that is not a line wheel. Let $R, B$ be a bicoloring of the vertices of $H$ and assume that $B\setminus N(v)\neq \emptyset$. Then, for every $r \in R$ and $b \in B \setminus N(v)$, $v$ breaks $\{r,b\}$.
\label{thm:proper_wheels}
\end{theorem}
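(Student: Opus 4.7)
The plan is to derive a contradiction from the assumption that $v$ does not break $\{r,b\}$. Suppose there is a connected component $D$ of $G\setminus N[v]$ with $\{r,b\}\subseteq N[D]$. Since $b\notin N(v)$, we have $b\in D$, so $D=D_b$ is the component of $G\setminus N[v]$ containing $b$. I will then choose a shortest path $P$ from $r$ to $b$ whose interior lies in $D_b$; then $P$ is induced and $V(P^*)\cap N[v]=\emptyset$. The goal is to combine $P$ with arcs of $H$ and the center $v$ to exhibit a configuration forbidden in $G$: a $C_4$, a prism, an odd hole, or an odd antihole.

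The structural input from $(H,v)$ that I intend to exploit is that $r$ and $b$ lie in segments of opposite colors of the bicoloring, so each arc of $H$ from $r$ to $b$ crosses an odd number of short sectors (the total number of short sectors being even, as $(H,v)$ is not an odd wheel). In particular, on each arc there is a triangle of the form $\{v,x,y\}$ with $xy$ a short sector. The hypothesis that $(H,v)$ is not a line wheel will be used to rule out the degenerate situation in which these short sectors, together with $P$ and $v$, can be arranged only to produce a single $C_4$ (already forbidden); outside the line-wheel configuration the segment structure is rich enough to make the arguments below go through.

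I will split into two cases depending on whether $P^*\cap V(H)=\emptyset$. In the main case $P^*\cap V(H)=\emptyset$, I form the cycle $C$ obtained by concatenating $P$, a sub-arc of $H$ from $r$ to a vertex $x\in N(v)\cap V(H)$ lying on one of the two arcs, and the edge $xv$, so that $C$ uses $v$ as a shortcut. Using $V(P^*)\cap N[v]=\emptyset$ and the minimality of $P$, I show that $C$ is induced; for a suitable choice of $x$, the parity of $|C|$ forced by the bicoloring yields an odd hole, and if parity does not immediately suffice, then combining $C$ with a short sector on the other arc and with $v$ produces a $C_4$, a prism, or an odd antihole. If $P^*\cap V(H)\neq\emptyset$, I pick $p\in P^*\cap V(H)$; since $p\notin N(v)$, the segment of $p$ has a well-defined color, and I recurse on whichever of the pairs $\{r,p\}$, $\{p,b\}$ has different colors, with inductive parameter $|P|$.

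The main obstacle will be the induced-cycle bookkeeping in the first case: systematically ruling out chords between $P^*$ and the chosen sub-arc of $H$. I plan to handle this by applying Lemma~\ref{lem:triangle} to the triangle $\{v,x,y\}$ arising from a short sector, together with the minimal connected subgraph of $D_b\cup\{r,b\}$ joining the neighborhoods of $r$ and $b$; each of the three outcomes of Lemma~\ref{lem:triangle} yields, in combination with $(H,v)$, either a prism, a $C_4$, a proper wheel, a universal wheel, or a twin wheel. The first two are forbidden by hypothesis, and the latter three wheel types must be dealt with using the structural properties of $(C_4,\mathrm{prism})$-free perfect graphs underlying the proof of Theorem~4.5 in \cite{CCV}.
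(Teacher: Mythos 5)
First, for context: the paper does not prove this statement at all --- it is imported wholesale, stated as ``follows from Theorem~4.5 in \cite{CCV}'' (the square-free perfect graphs paper), where it is established by a long structural case analysis of proper wheels. So there is no in-paper proof to match your argument against; the question is whether your sketch stands on its own. It does not.

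Your setup is fine (reducing to a component $D_b$ of $G\setminus N[v]$ containing $b$, taking a shortest path $P$ from $r$ to $b$ through $D_b$, and the parity observation that an arc of $H$ between segments of different colors crosses an odd number of short sectors), and the reduction of the case $P^*\cap V(H)\neq\emptyset$ to the main case by induction on $|P|$ is workable. But the main case is where all the content lies, and there you have two genuine gaps. First, the claim that the concatenation of $P$, a sub-arc of $H$, and $v$ is induced is not justified: minimality of $P$ makes $P$ itself induced and $P^*\cap N[v]=\emptyset$ kills edges to $v$, but nothing you say controls edges between $P^*$ and $V(H)\setminus\{r,b\}$, and these are exactly the adjacencies that destroy the odd hole you want. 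Second, when you invoke Lemma~\ref{lem:triangle} to handle this bookkeeping, its outcomes are themselves proper, universal, or twin wheels, and you propose to dispose of them ``using the structural properties \ldots underlying the proof of Theorem~4.5 in \cite{CCV}'' --- that is, you defer the decisive step to the very theorem (or its source proof) that you are supposed to be proving. The phrase ``if parity does not immediately suffice, then combining $C$ with a short sector on the other arc and with $v$ produces a $C_4$, a prism, or an odd antihole'' is likewise an assertion of the desired conclusion, not an argument. A genuine proof here requires the full wheel analysis of \cite{CCV} (or an equivalent substitute), which your sketch does not supply.
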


Next, we prove similar results for twin wheels and line wheels in ($C_4$, prism)-free perfect graphs. Let $G$ be a graph and let $A$ and $B$ be two vertex-disjoint subsets of $V(G)$ such that $A$ is anticomplete to $B$. A {\em path from $A$ to $B$} is a path $P=x_1 \dd \dots \dd x_n$ in $G \setminus (A \cup B)$ such that $x_1$ has a neighbor in $A$ and $x_n$ has a neighbor in $B$, and $P$ is inclusion-wise minimal with this property.

\begin{theorem}
Let $G$ be a ($C_4$, prism)-free perfect graph and let $(H, v)$ be a line wheel in $G$ with $H = a \dd b \dd \dots \dd c \dd d \dd \dots \dd a$ and $N(v) \cap H = \{a, b, c, d\}$. Let $P$ be the path in $H\setminus b$ from $a$ to $d$, and let $Q$ be the path in $H\setminus a$ from $b$ to $c$. Then, $v$ has a neighbor in every path from $P \setminus a$ to $Q \setminus c$ and in every path from $P \setminus d$ to $Q \setminus b$. In particular, $v$ breaks each of the sets $\{a\} \cup Q^*$, $\{d\} \cup Q^*$, $\{b\} \cup P^*$ and $\{c\} \cup P^*$.
\label{thm:line_forcers}
\end{theorem}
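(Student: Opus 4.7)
The plan is to prove the main claim (that $v$ has a neighbor in every path from $P \setminus a$ to $Q \setminus c$) by contradiction, producing either an odd hole or an odd wheel to violate respectively that $G$ is Berge or perfect. The symmetric statement about $P \setminus d$ to $Q \setminus b$ follows by relabeling $a \leftrightarrow d$ and $b \leftrightarrow c$, and the ``in particular'' clause is obtained from the main claim by a shortest-path construction inside a hypothetical non-breaking component.

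Suppose for contradiction $R = r_1 \dd \cdots \dd r_n$ is a path from $A := P \setminus a$ to $B := Q \setminus c$ disjoint from $N[v]$. I would pick $u_1 \in A$ to be the neighbor of $r_1$ closest to $d$ along $P$ and $w_n \in B$ the neighbor of $r_n$ closest to $c$ along $Q$, and analyse the cycle
$$C \;=\; u_1 \dd P_{u_1,d} \dd d \dd c \dd Q_{c,w_n} \dd w_n \dd R^{-1} \dd u_1.$$
The minimality of $R$, its induced-ness, the hole property of $H$, and the choices of $u_1, w_n$ force that the only possible chord of $C$ is an edge from some $r_i$ to $c$ (the unique vertex of the $H$-arc of $C$ that lies outside $A \cup B$ besides the endpoints). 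If no such chord exists, $C$ is a hole and so is the cycle $C^v$ obtained by replacing the edge $cd$ with the path $c \dd v \dd d$; since $|C^v| = |C| + 1$, one of them is an odd hole, contradicting $G$ being Berge. If $r_k c \in E$ for some smallest $k$, the analogous but truncated cycle $C' = u_1 \dd P_{u_1,d} \dd d \dd c \dd r_k \dd \cdots \dd r_1 \dd u_1$ and its $v$-augmentation are both holes whose lengths differ by one, again giving an odd hole. A few degenerate boundary configurations---most notably when $u_1 = d$ and $n = 1$ with $r_1 c \in E$, where the truncated cycle degenerates to a triangle---require a separate argument: there the cycle $a \dd P_{a,d} \dd d \dd r_1 \dd w_n \dd Q_{w_n,b} \dd b \dd a$ is a hole in which $v$'s three neighbors $a, b, d$ create three sectors of lengths $p$, $q+1$ and $1$ (both $p$ and $q$ being even as $(H,v)$ is a line wheel in a Berge graph), exhibiting an odd wheel centred at $v$ and contradicting perfectness. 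The main obstacle throughout this first half is precisely this chord bookkeeping in the various configurations.

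For the ``in particular'' clause it suffices by symmetry to treat $X = \{a\} \cup Q^*$. If $v$ does not break $X$ then there is a component $D$ of $G \setminus N[v]$ with $X \subseteq N[D]$, which forces $a \in N(D)$ and $Q^* \subseteq D$ (using that $N(D) \subseteq N[v]$ and that $Q^*$ is a connected subgraph of $G \setminus N[v]$). The main claim rules out $P^* \subseteq D$: otherwise a shortest path in $G[D]$ from $P^*$ to $Q^*$ has length at least two (since $H$ is a hole, no $P^*$-$Q^*$ edge exists in $G$), and its interior is a path from $A$ to $B$ disjoint from $N[v]$, contradicting the main claim. Hence $P^* \cap D = \emptyset$. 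Taking a shortest induced path $\rho = a \dd d_1 \dd \cdots \dd q^*$ from $a$ into $Q^*$ in $G[D \cup \{a\}]$, I would form the cycle $C^* = a \dd \rho \dd q^* \dd Q_{q^*,c} \dd c \dd d \dd P_{d,a} \dd a$. Chords $d_i p_j$ are forbidden by $P^* \cap D = \emptyset$; chords from the interior of $\rho$ to $c$, $d$, or to other $Q^*$ vertices are eliminated by iterative truncation (switching to the $b$-side arc of $H$ if $d_{m-1}$'s adjacencies to $Q^*$ interfere), yielding a chordless cycle $C^\star$ whose $v$-neighbors are exactly $\{a, c, d\}$, producing three sectors of lengths $\geq 2$, $1$, and $\geq 2$ respectively. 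Thus $(C^\star, v)$ is an odd wheel, contradicting that $G$ is perfect.
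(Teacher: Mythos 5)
Your parity idea for the main claim---comparing the hole $C$ with the hole $C^v$ obtained by replacing the edge $cd$ with $c\dd v\dd d$, and noting that their lengths differ by one---is genuinely different from the paper's proof (which repeatedly produces prisms or links a vertex onto a triangle), and is an attractive shortcut. Unfortunately, the ``chord bookkeeping'' you yourself flag as the main obstacle is exactly where the argument leaves gaps. If $w_n=b$, then $v$ has three neighbors $b,c,d$ on $C$, so $C^v$ is not a hole and the parity comparison does not apply as stated; one would need to argue instead that $(C,v)$ is an odd wheel with exactly one short sector. More seriously, in the degenerate case (which occurs whenever $k=1$ and $u_1=d$, not only when $|R|=1$) you assert that $a\dd P_{a,d}\dd d\dd r_1\dd w_n\dd Q_{w_n,b}\dd b\dd a$ is a hole, but the choice of $u_1$ as the $P$-neighbor of $r_1$ closest to $d$ and of $w_n$ as the $Q$-neighbor closest to $c$ does not rule out $r_1$ having further neighbors in $P^*$ or in $Q$ strictly between $w_n$ and $b$; any such neighbor is a chord of your cycle. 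To repair this you would need to re-anchor the endpoints (say, closest to $a$ and closest to $b$) and re-verify chordlessness---exactly the detailed case analysis the proposal currently skips.

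For the ``in particular'' clause, the derivation that $P^*\cap D=\emptyset$ is sound, but the odd-wheel construction that follows rests on ``iterative truncation'' and ``switching to the $b$-side arc,'' which are not carried out and are not obviously convergent. In fact the detour is unnecessary: once $P^*\cap D=\emptyset$ is established, the interior of your shortest path $\rho$ from $a$ to $q^*\in Q^*$ lies entirely in $D$, is disjoint from $H$ (it avoids $\{a,b,c,d\}\subseteq N(v)$ because it lies in $D$, it avoids $Q^*$ by shortestness, and it avoids $P^*$ because $P^*\cap D=\emptyset$), and it has one end with a neighbor in $P\setminus d$ (namely $a$) and the other with a neighbor in $Q\setminus b$ (namely $q^*$). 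A minimal subpath of this interior is therefore a path from $P\setminus d$ to $Q\setminus b$ to which $v$ is anticomplete, contradicting the main claim directly---no wheel argument is needed.
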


\begin{proof}
Let $R$ be a minimal path violating the outcome of the lemma, that is, $R$ is a path from $P \setminus a$ to $Q \setminus c$ or from $P \setminus d$ to $Q \setminus b$, $v$ is anticomplete to $R$, and $R$ is a minimal path satisfying these properties. Since $G$ is $C_4$-free perfect, the paths $P$ and $Q$ are even and of length at least four, and no vertex in $R$ is complete to $\{a, c\}$, or to $\{a, d\}$, or to $\{b, c\}$, or to $\{b, d\}$.

We recall that in a perfect graph, no vertex can be linked onto a triangle. Assume first that the path $R$ consists of a single vertex, say $R = \{r\}$. Let  $q$ be the neighbor of $r$ in $Q$ that is closest to $b$.

We claim that $r$ is anticomplete to $\{a, b, c, d\}$. Suppose $r$ is adjacent to $d$. Then, $r$ is anticomplete to $\{a, b\}$, and so $q \neq b$.  If $d$ is the unique neighbor of $r$ in $P$, then we can link $d$ onto the triangle $abv$: $d \dd v$, $d \dd P \dd a$, and $d \dd r \dd q \dd Q \dd b$. (Note that $q \neq c$ since in this case $R$ is a path from $P \setminus a$ to $Q \setminus c$.) It follows that $d$ is not the unique neighbor of $r$ in $P$. By symmetry, we deduce that $q \neq c$. Assume $r$ has two non-adjacent neighbors in $P$. Then we can link $r$ onto the triangle $abv$: $r \dd d \dd v$, $r \dd p \dd P \dd a$, and $r \dd q \dd Q \dd b$. Therefore, $r$ has two adjacent neighbors in $P$, namely $d$ and the neighbor $d'$ of $d$ in $P$. Then, we obtain a prism between the triangles $dd'r$ and $abv$, a contradiction. This proves that $r$ is not adjacent to $d$. By symmetry, it follows that $r$ is anticomplete to $\{a, b, c, d\}$.

Now, if $r$ has a unique neighbor $p$ in $P$, we can link $p$ onto the triangle $abv$: $p \dd P \dd a$, $p \dd P \dd d \dd v$, and $p \dd r \dd q \dd Q \dd b$. If $r$ has two non-adjacent neighbors $p_1, p_2$ in $P$, we can link $r$ onto $abv$: $r \dd p_1 \dd P \dd a$, $r \dd p_2 \dd P \dd d \dd v$, and $r \dd q \dd Q \dd b$. Hence, $r$ has two adjacent neighbors $p_1, p_2$ in $P$ and we obtain a prism between the triangles $rp_1p_2$ and $abv$, a contradiction. This proves that $R$ is of length at least one.

Let $R = r_1 \dd \dots \dd r_k$ with $k \geq 2$. We claim that $\{a, b, c, d\}$ is anticomplete to $R^*$. Suppose $a$ has a neighbor in $R^*$. Then, since $R$ is a minimal violating path, $b$ is the unique neighbor of $r_k$ in $Q$. By the minimality of $R$, $c$ is anticomplete to $R$ and $d$ is anticomplete to $R \setminus r_1$. Since $R$ is a violating path, $r_1$ has a neighbor in $P \setminus a$. Then, by the minimality of $R$, $b$ is anticomplete to $R \setminus r_k$. Now, we can link $b$ onto the triangle $cdv$: $b \dd v$, $b \dd Q \dd c$, and $b \dd r_k \dd R \dd r_1 \dd p \dd P \dd d$, where $p$ is the neighbor of $r_1$ in $P \setminus a$ closest to $d$ (possibly $p = d$). This proves that $\{a, b, c, d\}$ is anticomplete to $R^*$.

Next, we claim that $r_1$ is not adjacent to $b$. Suppose otherwise. Then, by the minimality of $R$, $a$ is the unique neighbor of $r_1$ in $P$, $N(r_1) \cap H=\{a,b\}$, and $R$ is a path from $P \setminus d$ to $Q \setminus b$. In particular, $r_k$ has a neighbor in $Q \setminus b$. Then, again by the minimality of $R$, $r_k$ is not adjacent to $a$. Let $q$ be the neighbor of $r_k$ in $Q$ that is closest to $c$. If $r_k$ is not adjacent to $d$, then by the minimality of $R$, $r_k$ is anticomplete to $P$ and hence we can link $a$ onto the triangle $cdv$: $a \dd v$, $a \dd P \dd d$, and $a \dd r_1 \dd R \dd r_k \dd q \dd c$. So, $r_k$ is adjacent to $d$. This restores the symmetry between $r_1$ and $r_k$, and so $N(r_k) \cap H=\{c,d\}$ and $P \cup Q \cup R$ is a prism in $G$, a contradiction. This proves that $r_1$ is not adjacent to $b$. By symmetry, $r_1$ is not adjacent to $c$, and $r_k$ is not adjacent to $a, d$. So by the minimality of $R$, $R \setminus r_k$ is anticomplete to $Q$ and $R \setminus r_1$ is anticomplete to $P$.

Now, assume that $r_1$ has a unique neighbor $p$ in $P$. We may assume that $p \in P \setminus a$, and so we can link $p$ onto the triangle $abv$: $p \dd P \dd a$, $p \dd P \dd d \dd v$, and $p \dd r_1 \dd R \dd r_k \dd q \dd Q \dd b$, where $q$ is the neighbor of $r_k$ in $Q$ closest to $b$, unless $q=c$. If $q=c$, then $p \neq d$, and hence we can link $c$ onto the triangle $abv$. This proves that $r_1$ has at least two neighbors in $P$, and by symmetry, $r_k$ has at least two neighbors in $Q$. Suppose $r_1$ has two non-adjacent neighbors $p_1, p_2$ in $P$. Then, we can link $r_1$ onto the triangle $abv$: $r_1 \dd p_1 \dd P \dd a$, $r_1 \dd p_2 \dd P \dd d \dd v$, and $r_1 \dd R \dd r_k \dd q \dd Q \dd b$, where $q$ is the neighbor of $r_k$ in $Q$ closest to $b$ (possibly $q=b$ but certainly $q \neq c$ since $r_k$ has at least two neighbors in $Q$). This proves that $r_1$ has two adjacent neighbors $p_1, p_2$ in $P$, and by symmetry, $r_k$ has two adjacent neighbors $q_1, q_2$ in $Q$. Then, $H \cup R$ is a prism between the triangles $r_1p_1p_2$ and $r_kq_1q_2$, a contradiction.
\end{proof}

\leqnomode
\begin{theorem}
Let $G$ a ($C_4$, prism)-free perfect graph and let $(H,v)$ be a twin wheel in $G$. Let $a,u,b$ be the neighbors of $v$ in $H$ appearing in this order when traversing $H$. Let $S_u=N[u]\setminus \{v\}$ and $S_v=N[v]\setminus \{u\}$. Then, one of the following holds:
\begin{enumerate}[(i)]
\itemsep0em
\item $S_u$ is a cutset of $G$ that separates $v$ from a vertex of $H\setminus \{a, u, b\}$,
\item $S_v$ is a cutset of $G$ that separates $u$ from a vertex of $H\setminus \{a, u, b\}$.
\end{enumerate}
In particular, either $u$ breaks $H\setminus \{a,b\}$ or $v$ breaks $H\setminus \{a,b\}$.
\label{thm:twin_forcers}
\end{theorem}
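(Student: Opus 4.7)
The plan is to prove the main claim ``(i) or (ii)'' by contradiction, and then to deduce the ``In particular'' clause by unraveling the definition of \emph{breaks}. Suppose neither (i) nor (ii) holds. Since $S_u$ does not separate $v$ from $P^*$, there is an induced path $Q = v\dd q_1\dd\cdots\dd q_{\ell-1}\dd p_\alpha$ of minimum length from $v$ to some $p_\alpha\in P^*$ with $V(Q)\setminus\{v\}\subseteq V(G)\setminus S_u$; in particular $u$ is anticomplete to $V(Q)\setminus\{v\}$. Symmetrically, there is a shortest induced path $R = u\dd r_1\dd\cdots\dd r_{m-1}\dd p_\beta$ with $v$ anticomplete to $V(R)\setminus\{u\}$. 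A direct application of $C_4$-freeness to $\{v,q_1,r_1,u\}$ (noting the non-edges $vr_1$ and $uq_1$) yields $q_1r_1\notin E(G)$, so $q_1\dd v\dd u\dd r_1$ is an induced $P_4$.

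The next step is to combine $Q$, $R$, and a subpath of the long sector $P=a\dd p_1\dd\cdots\dd p_k\dd b$ of $H$ into a single induced cycle
\[
C\ =\ v\dd Q\dd p_\alpha\dd P[p_\alpha,p_\beta]\dd p_\beta\dd R\dd u\dd v,
\]
where $P[p_\alpha,p_\beta]$ denotes the subpath of $P$ between $p_\alpha$ and $p_\beta$. Further shortenings of $Q$ and $R$ may be required to eliminate chords arising from $a$, $b$, or from edges among $V(Q)\cup V(R)\cup V(P)$; these are resolved using the minimality of the original paths together with $(C_4,\text{prism})$-freeness. The resulting hole contains both $v$ and $u$ as consecutive vertices.

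With $C$ in hand, the main thrust is to apply Lemma~\ref{lem:triangle} to the triangle $\{a,u,v\}$ and a minimal connected subgraph $H^*\subseteq G\setminus\{a,u,v\}$ containing a neighbor of each of $a,u,v$; the interior of $C$ supplies such an $H^*$ (through $q_1$, $r_1$, and a neighbor of $a$ from the $P$-segment of $C$). Lemma~\ref{lem:triangle} produces either a prism, which is forbidden by hypothesis, or a new induced hole $H''$ together with a wheel hub $x\in\{a,u,v\}$ of type universal, proper, or twin. The universal case is excluded because it would force $u$ or $v$ to be adjacent to a vertex of $P^*$, contradicting the structure of the original twin wheel $(H,v)$. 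In the proper case Theorem~\ref{thm:proper_wheels} furnishes a separator that witnesses (i) or (ii); the line-wheel subcase is handled analogously by Theorem~\ref{thm:line_forcers}. The twin subcase reduces to a configuration with strictly shorter long sector, enabling an induction on $k$ (the base $k=2$ being straightforward from $(C_4,\text{prism})$-freeness and the very restricted configurations of $Q,R$ when $P^*=\{p_1,p_2\}$).

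The main obstacle is the case analysis needed to (a) keep $C$ induced after all the shortenings and (b) classify the wheel $(H'',x)$; this splits on whether $\alpha=\beta$, on whether $\{p_\alpha,p_\beta\}\subseteq\{p_1,p_k\}$, on whether $Q$ and $R$ share interior vertices, and on possible adjacencies from $a$ or $b$ into $V(Q)\cup V(R)$. The ``In particular'' clause then follows directly: (ii) holds if and only if the component of $G\setminus N[v]$ containing $P^*$ has no neighbor of $u$, which is exactly the statement ``$v$ breaks $H\setminus\{a,b\}$'', and (i) corresponds symmetrically to ``$u$ breaks $H\setminus\{a,b\}$'' under the natural convention for a vertex that would ``break'' a set to which it belongs.
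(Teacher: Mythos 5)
Your opening moves track the paper's proof closely: assume neither (i) nor (ii), take an escape path from $v$ avoiding $N[u]\setminus\{v\}$ and one from $u$ avoiding $N[v]\setminus\{u\}$, use $C_4$-freeness to control adjacencies near the edge $uv$, splice the paths into a hole through $u$ and $v$, and invoke Lemma~\ref{lem:triangle} to see $a$ and $b$ as hubs of proper or twin wheels on that hole. (One remark: the paper avoids your ``further shortenings'' worry by simply taking an induced path $T$ from $p_1$ to $q_1$ inside the connected set $P\cup Q\cup(H\setminus\{a,u,b\})$ and letting $H'=T\cup\{u,v\}$; minimality of a path inside a fixed connected subgraph does the cleanup for free.)

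The genuine gap is in how you dispose of the two wheel types. First, in the proper-wheel case you assert that Theorem~\ref{thm:proper_wheels} ``furnishes a separator that witnesses (i) or (ii)''; but the hub of that wheel is $a$ or $b$, not $u$ or $v$, and what the theorem gives is that $a$ breaks certain bicolored pairs of the \emph{new} hole $H'$ --- this says nothing directly about $S_u$ or $S_v$ separating $v$ or $u$ from $H\setminus\{a,u,b\}$. The paper instead uses those breaking statements to contradict the adjacency pattern of the other apex $b$, thereby proving (its step (5.5.2)) that both $(H',a)$ and $(H',b)$ must be twin wheels. Second, and more seriously, the twin-wheel case is where essentially all of the paper's work lies, and your proposed ``induction on $k$'' does not close it: the new hole is assembled from the escape paths $Q$ and $R$, whose lengths are completely unrelated to the length $k$ of the long sector of the original wheel, so there is no strictly decreasing parameter; and even if there were, the inductive hypothesis would only yield a cutset statement about the \emph{new} twin wheel, not the required statement about $S_u$, $S_v$ and $H\setminus\{a,u,b\}$. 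The paper replaces this with a direct argument: knowing $N(a)\cap H'=\{p_1,u,v\}$ and $N(b)\cap H'=\{u,v,q_1\}$, it runs a chain of linking-onto-a-triangle, odd-wheel and prism contradictions that forces both escape paths to have length one and $p_1q_1$ to be an edge, contradicting $C_4$-freeness. Without an argument of this kind your proof does not go through.
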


\begin{proof}
Suppose not. Let $P=p_1 \dd \ldots \dd p_k$ be a path from $v$ to $H\setminus \{ a,u,b\}$ in $G\setminus S_u$, and $Q=q_1 \dd \ldots \dd q_l$ be a path from  $u$ to $H\setminus \{ a,u,b\}$ in $G\setminus S_v$. In particular, $p_1$ is the unique neighbor of $v$ in $P\cup Q\cup (H\setminus \{ a,u,b\})$, and $q_1$ is the unique neighbor of $u$ in $P\cup Q\cup (H\setminus \{ a,u,b\})$, and $p_1\neq q_1$. Since $G$ is $C_4$-free, $p_1q_1$ is not an edge, and $a$ and $b$ do not have a common neighbor in $P\cup Q\cup (H\setminus \{ a,u,b\})$. Note that $P\cup Q\cup (H\setminus \{ a,u,b\})$ is connected. Let $T$ be a path in $P\cup Q\cup (H\setminus \{ a,u,b\})$ from $p_1$ to $q_1$, and let $H'$ be the hole induced by $T\cup \{u, v\}$.

\vspace{-0.3cm}

\begin{equation}
\longbox{{\it Both $a$ and $b$ have a neighbor in $T$. In particular, both $(H',a)$ and $(H',b)$ are proper or twin wheels.}}
\label{eq:a_b_nhbr_in_T}\tag{$5.5.1$}
\end{equation}
{\em Proof of \eqref{eq:a_b_nhbr_in_T}}:
Assume that $a$ has no neighbor in $T$. Let $S$ be a path from $a$ to $T$ with $S \subseteq (H \cup P \cup Q) \setminus \{u,v\}$. Then, $(T \cup S) \setminus \{u,v,a\}$ is a connected graph, containing exactly one neighbor of  each of $u,v,a$, and no vertex of $(T \cup S) \setminus \{u,v,a\}$ is adjacent to two of $u,v,a$, contrary to Lemma \ref{lem:triangle}. This proves \eqref{eq:a_b_nhbr_in_T}.

\vspace{-0.3cm}

\begin{equation}
\mbox{{\it Both $(H',a)$ and  $(H',b)$ are twin  wheels.}}
\label{eq:they_are_twin_w}\tag{$5.5.2$}
\end{equation}
{\em Proof of \eqref{eq:they_are_twin_w}}:
Suppose $(H',a)$ is a proper wheel. Consider a bicoloring $R, B$ of $(H',a)$. We may assume that $u\in R$, $v\in B$ and $B\setminus N(a)\neq \emptyset$. By Theorem \ref{thm:proper_wheels} and Theorem \ref{thm:line_forcers}, for every $x \in B \setminus N(a)$, $a$ breaks $\{u,x\}$. Since $b$ is adjacent to $u$, it follows that all the neighbors of $b$ in $H'\setminus \{v\}$ belong to $R$. In particular, by \eqref{eq:a_b_nhbr_in_T}, $R\setminus N(a)\neq \emptyset$. So, by Theorem \ref{thm:proper_wheels} and Theorem \ref{thm:line_forcers}, for every $y \in R \setminus N(a)$, $a$ breaks $\{v, y\}$. This contradicts the fact that $b$ is adjacent to $v\in B$ and has a neighbor in $R\setminus N(a)$. This proves \eqref{eq:they_are_twin_w}.

\medskip

By \eqref{eq:they_are_twin_w}, and since $a$ and $b$ do not have a common neighbor in $H'\setminus \{ u,v\}$, we may assume that $N(a)\cap H'=\{ p_1,u,v\}$ and $N(b)\cap H'=\{ u,v,q_1\}$. If no vertex of $H \setminus \{ a,u,b\}$ has a neighbor in $T$ (and in particular $T\cap H=\emptyset$), then we can link $b$ onto the triangle $p_1ua$ in $(H\setminus \{u\}) \cup T$. Thus, some vertex of $H\setminus \{ a,u,b\}$ has a neighbor in $T$, and so we may assume that $p_k\in T$, i.e. $P\subseteq T$. It follows that $N(a)\cap P=\{ p_1\}$. Let $a'$ be the neighbor of $a$ in $H \setminus u$. Suppose first that $p_k$ has a neighbor in $H \setminus a'$. Then, there is a path $S$ from $p_1$ to $b$ with $S^* \subseteq (H \cup P) \setminus \{a, a', b, u, v\}$, But $a \dd p_1 \dd S \dd b \dd v \dd a$ is a hole and $a$ is an odd wheel center for this hole, a contradiction. This proves that $a'$ is the unique neighbor of $p_k$ in $H \setminus \{a\}$. If $k>1$, then we can link $a'$ onto the triangle $avp_1$ in the graph $(H \setminus \{u\})\cup P \cup Q$, so $k=1$.
Since $(H',a)$ is a twin wheel, it follows that $a' \not \in H'$, and so $p_1$ has a neighbor in $Q$. Let $i$ be minimum such that $p_1$ is adjacent to $q_i$. If $i<l$, then $H \cup \{p_1, q_1, \dots, q_i\}$ induces a prism with triangles $aa'p_1$ and $ubq_1$, a contradiction.
This proves that $i=l$, and consequently $q_l \in T$ and $Q \subseteq T$. This restores the symmetry between $P$ and $Q$, and therefore $l=1$, and $p_1q_1$ is an edge, a contradiction.
\end{proof}

We now prove that paws are forcers for ($C_4$, prism)-free perfect graphs. (It is easy to verify that paws may not be forcers for graphs that are not $C_4$-free, prism-free, or perfect.)

\begin{theorem}
Let $G$ be a ($C_4$, prism)-free perfect graph. Let $P$ be a paw in $G$ with vertex set $\{a, c, b_1, b_2\}$ and edge set $\{ab_1, ab_2, b_1b_2, ac\}$. Then, either $a$ breaks $\{c, b_1,b_2\}$, or $b_1$ breaks $\{c, b_2\}$, or $b_2$ breaks $\{c, b_1\}$.
\label{thm:paws_are_forcers}
\end{theorem}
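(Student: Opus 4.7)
The plan is to suppose that none of the three conclusions holds and to derive a contradiction. From the failure of each break, I extract a component: a component $D_a$ of $G\setminus N[a]$ in which each of $c, b_1, b_2$ has a neighbor; a component $D_{b_1}$ of $G\setminus N[b_1]$ that contains $c$ (since $c\notin N[b_1]$) and in which $b_2$ has a neighbor; and a symmetric component $D_{b_2}$. In particular, $D_{b_1}$ yields an induced path from $c$ to $b_2$ whose interior lies in $D_{b_1}\subseteq G\setminus N[b_1]$, and symmetrically $D_{b_2}$ yields an induced path from $c$ to $b_1$ whose interior lies in $G\setminus N[b_2]$; these paths will eventually produce the routing contradictions.

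I apply Lemma \ref{lem:three_leaves} to a minimal connected induced subgraph $H\subseteq D_a$ containing a neighbor of each of $c, b_1, b_2$, with $(x_1,x_2,x_3)=(c,b_1,b_2)$. If outcome (iii) holds, the triangle $a_1a_2a_3$ in $H$, together with $\{a,b_1,b_2\}$ and the extended connecting path $a\dd c\dd P_1\dd a_1$, forms a prism; the cross-edge check uses that $a$ is anticomplete to $H$ and that $cb_1,cb_2$ are non-edges, contradicting prism-freeness. If outcome (ii) holds, a common vertex $a^*\in H$ is joined to $c,b_1,b_2$ by internally disjoint paths $P_1,P_2,P_3$; extending $P_1$ by $c\dd a$, I obtain three paths from $a^*$ to $\{a,b_1,b_2\}$ whose only cross-edges are exactly $ab_1,ab_2,b_1b_2$. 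If at least two of these paths have length $\geq 2$, we obtain a link of $a^*$ onto the triangle $\{a,b_1,b_2\}$ in a perfect graph, which is a contradiction. Otherwise $|P_2|=|P_3|=1$, i.e.\ $a^*b_1,a^*b_2$ are edges; if moreover $a^*\sim c$ then $\{a,a^*,c,b_1\}$ induces $C_4$, so $|P_1|\geq 2$. Then $H'=c\dd P_1\dd a^*\dd b_1\dd a\dd c$ is an induced hole of length at least $5$ on which $b_2$ has the three consecutive neighbors $a,b_1,a^*$, producing a twin wheel. Theorem \ref{thm:twin_forcers} then yields that either $N[b_1]\setminus\{b_2\}$ separates $b_2$ from some $w\in\{c\}\cup P_1^*$, or $N[b_2]\setminus\{b_1\}$ separates $b_1$ from some such $w$. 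But every $w\in\{c\}\cup P_1^*$ lies in $D_{b_1}$ (the $c$-to-$w$ subpath of $P_1$ avoids $N[b_1]$ by the cross-edge restrictions in Lemma \ref{lem:three_leaves}), and $D_{b_1}$ contains a neighbor of $b_2$, so $w$ and $b_2$ are connected in $G\setminus(N[b_1]\setminus\{b_2\})$, contradicting the first alternative; the second alternative is symmetric using $D_{b_2}$.

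Outcome (i) splits according to which pair $\{x_i,x_j\}\subseteq\{c,b_1,b_2\}$ is covered by the path or hole; since $cb_1$ and $cb_2$ are non-edges, the hole variant is only available for $\{b_1,b_2\}$. In each sub-case I attach $a$ to the path-shaped $V(H)=V(P)\setminus\{x_i,x_j\}$ (and, when it helps, exploit the edge $b_1b_2$) to obtain an induced hole $H_0$ of length at least $5$, and I examine the wheel structure around $x_k$ (or, when natural, around a paw vertex). Since $G$ is perfect the wheel cannot be odd, so it is universal, twin, line, or a non-odd proper wheel. Applying Theorem \ref{thm:proper_wheels}, Theorem \ref{thm:line_forcers}, or Theorem \ref{thm:twin_forcers} according to the type yields a separation ``some star cutset separates one paw vertex from another,'' and the same routing argument as in outcome (ii) — tracking how the separated vertex sits in $D_a$, $D_{b_1}$, or $D_{b_2}$ and is therefore reconnected to its target through that component — produces the contradiction.

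The main obstacle is outcome (i): each combination of pair $\{x_i,x_j\}$, neighborhood pattern for $x_k$, and position of the neighbors along $V(H)$ produces a different wheel type with its own separation conclusion, and one must carefully identify the correct routing $D$-component and verify that the separated vertex really lies in (or connects into) it. A subtler recurring point is handling the degenerate short-distance configurations — such as $|P_i|=1$ in outcome (ii), or very short paths between paw vertices in outcome (i) — where the wheel theorems do not apply and one must invoke $C_4$-freeness or direct perfection of $G$ instead.
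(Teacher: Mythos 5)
Your treatment of outcomes (ii) and (iii) of Lemma \ref{lem:three_leaves} is essentially sound (the prism in (iii), and in (ii) the linking/$C_4$/twin-wheel trichotomy followed by the routing refutation via $D_{b_1}$ and $D_{b_2}$). The genuine gap is outcome (i), which you explicitly leave as a sketch, and the sketch contains a concrete error traceable to your choice of attachment set. You apply Lemma \ref{lem:three_leaves} to $\{c,b_1,b_2\}$, which is not a triangle, and in the sub-case where the pair $\{x_i,x_j\}$ is $\{b_1,b_2\}$ your plan breaks down in three ways. First, ``attaching $a$'' does not produce a hole: the cycle $b_1\dd h_1\dd\cdots\dd h_t\dd b_2\dd a\dd b_1$ has the chord $b_1b_2$. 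Second, the relevant wheel in this sub-case is centered at $c$, and since $c$ can attach to $H$ by pairwise non-adjacent neighbors, that wheel can be \emph{triangle-free}; your claim that a non-odd wheel must be ``universal, twin, line, or a non-odd proper wheel'' omits this case, and Theorems \ref{thm:proper_wheels}, \ref{thm:line_forcers}, \ref{thm:twin_forcers} say nothing about triangle-free wheels. Third, even when one of those theorems does apply there, its conclusion is that \emph{$c$} breaks certain pairs --- but your reductio hypotheses concern only $a$, $b_1$, $b_2$, so no contradiction is available. (A smaller but real issue of the same flavor occurs in the $\{c,b_1\}$ and $\{c,b_2\}$ sub-cases when the wheel around $b_2$ or $b_1$ is a line wheel: the ``breaks'' clause of Theorem \ref{thm:line_forcers} only yields breaking of a superset of $\{c,b_1\}$, which is weaker than breaking $\{c,b_1\}$; one must instead use the ``has a neighbor in every path'' clause and extract a minimal subpath of the path supplied by $D_{b_2}$.)

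The paper sidesteps all of this by applying the three-leaves lemma to the \emph{triangle} $\{a,b_1,b_2\}$ with the connected set $D\cup\{c\}$, where $D$ is the component witnessing that $a$ fails to break $\{c,b_1,b_2\}$; this is packaged as Lemma \ref{lem:triangle}. Because the attachment set is a triangle, the resulting wheel always contains the short sector coming from an edge of the triangle (so it is never triangle-free), it is centered at one of $a,b_1,b_2$ (about which the hypotheses do say something), and the outcome in which the wheel is centered at $a$ is killed outright because $N(a)\cap H'=\{c,b_1,b_2\}$ forms an odd wheel. To repair your proof you would essentially have to redo outcome (i) through this triangle-based reduction, which is the content of the paper's argument.
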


\begin{proof}
Suppose $a$ does not break $\{c, b_1, b_2\}$. Then, there exists a connected component $D$ of $G \setminus N[a]$ such that $\{c,b_1,b_2\} \subseteq N[D]$. Since $a$ is adjacent to $c, b_1, b_2$, it follows that $c, b_1, b_2 \not \in D$. Then, $D' = D \cup \{c\}$ is a connected graph containing neighbors of each of $a, b_1, b_2$, and $c$ is the unique neighbor of $a$ in $D'$. Let $H$ be a connected induced subgraph of $D'$ such that $H$ contains at least one neighbor of each of $a, b_1, b_2$, and that $V(H)$ is minimal subject to inclusion. Note that $c \in H$ and that no vertex of $H$ is complete to $\{a, b_1, b_2\}$. Then, by Lemma \ref{lem:triangle}, either
\begin{itemize}
\itemsep0em
\item $H \cup \{b_1, b_2\}$ is a hole $H'$ and $(H', a)$ is a proper, or a universal, or a twin wheel; or
\item (possibly switching the roles of $b_1$ and $b_2$), $H \cup \{a, b_1\}$ is a hole $H'$ and $(H', b_2)$ is a proper, or a universal, or a twin wheel.
\end{itemize}
The first outcome does not hold since in that case $N(a) \cap H' = \{c,b_1,b_2\}$ leading to an odd wheel, which contradicts $G$ being perfect. Therefore, the second outcome holds. Now, since $b_2$ is not adjacent to $c$, $(H', b_2)$ is a proper or a twin wheel. If $(H', b_2)$ is a proper wheel that is not a line wheel or if $(H', b_2)$ is a line wheel, then $b_2$ breaks $\{c, b_1\}$ by Theorem \ref{thm:proper_wheels} and Theorem \ref{thm:line_forcers}, respectively. If $(H', b_2)$ is a twin wheel, then by Theorem \ref{thm:twin_forcers}, either $b_2$ breaks $\{c, b_1\}$ or $b_1$ breaks $\{c, b_2\}$.
\end{proof}

A graph $G$ is {\em paw-friendly} if $G$ is perfect and for every induced subgraph $H$ of $G$,
if $P$ is a paw in $H$ with vertex set $\{c, a, b_1, b_2\}$ and edge set
$\{ab_1, ab_2, b_1b_2, ac\}$, then with respect to $H$ either $a$ breaks $\{c, b_1,b_2\}$, or $b_1$ breaks $\{c, b_2\}$, or $b_2$ breaks $\{c, b_1\}$. It follows from this definition that paws are forcers for paw-friendly graphs. The following is immediate from Theorem \ref{thm:paws_are_forcers}.

\begin{theorem}
  \label{thm:C4prismpawfriendly}
  Every ($C_4$, prism)-free perfect graph is paw-friendly.
\end{theorem}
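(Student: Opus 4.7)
The plan is extremely short: Theorem \ref{thm:C4prismpawfriendly} is essentially a corollary of Theorem \ref{thm:paws_are_forcers}, obtained by observing that the class of $(C_4, \text{prism})$-free perfect graphs is hereditary. So the work has already been done; we just need to package it correctly against the definition of paw-friendly.

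First, I would let $G$ be a $(C_4, \text{prism})$-free perfect graph and fix an arbitrary induced subgraph $H$ of $G$. By the definition of perfect graphs, every induced subgraph of a perfect graph is perfect, so $H$ is perfect. Moreover, any induced subgraph of an $\mathcal{F}$-free graph is $\mathcal{F}$-free, so $H$ is also $(C_4, \text{prism})$-free. Thus $H$ itself is a $(C_4, \text{prism})$-free perfect graph.

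Next, let $P$ be any paw in $H$ with vertex set $\{c, a, b_1, b_2\}$ and edge set $\{ab_1, ab_2, b_1 b_2, ac\}$, as in the definition of paw-friendly. Applying Theorem \ref{thm:paws_are_forcers} to $H$ and this paw, we obtain directly that either $a$ breaks $\{c, b_1, b_2\}$, or $b_1$ breaks $\{c, b_2\}$, or $b_2$ breaks $\{c, b_1\}$ (where ``breaks'' is interpreted in $H$, matching the definition of paw-friendly, which also quantifies the breaking over induced subgraphs). Since $H$ was arbitrary, this verifies the paw-friendly condition for $G$, completing the proof.

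There is no real obstacle here: the only mild subtlety is to notice that paw-friendly is defined via a quantification over all induced subgraphs $H$, whereas Theorem \ref{thm:paws_are_forcers} is stated for a fixed $(C_4, \text{prism})$-free perfect graph; the hereditary property of the class bridges this gap cleanly, so the proof is a one-line invocation after that remark.
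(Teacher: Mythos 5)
Your proof is correct and matches the paper's approach exactly: the paper states this result is immediate from Theorem \ref{thm:paws_are_forcers}, and the hereditary observation you spell out is precisely the bridge intended.
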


\section{The star-free bag of paw-friendly graphs}
\label{sec:star_free_bag} 

  In this section, we prove several properties of the star-free bag of paw-friendly graphs. All of the definitions and lemmas in this section share the following assumptions: \\
  
  \noindent {\bf Common assumptions for Section 6:} {\em Let $c \in [\frac{1}{2}, 1)$ and let $d, \delta$ be positive integers with $d \geq \delta + 3$. Let G be a connected paw-friendly graph with maximum degree $\delta$, and $w:V(G) \to [0, 1]$ a
weight function on $V(G)$ with $w(G) = 1$. Assume that $G$ has no $d$-bounded $(w, c)$-balanced separator. Let $X$ be an $\smallO$-star cover
of $G$, $\S$ an $\smallO$-star covering sequence of $G$, and $\beta$ the star-free bag of $G$.} 

\begin{lemma}
\label{lemma:beta_bipartite}
\label{lemma:dim_is_2}
 The following hold:
 \begin{enumerate}[(i)]
\item $\beta = X$ and $\beta$ is bipartite.
\item Let $(X_1, X_2)$ be a bipartition of $\beta$. Then for $i = 1, 2$, $\S_i = \{S_x \mid x \in X_i\}$ is loosely laminar.
\end{enumerate}
\end{lemma}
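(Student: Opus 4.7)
The plan is to first identify $\beta$ with $X$ using Lemma~\ref{lemma:beta_is_X}, whose hypothesis $d \ge \dim(\S)$ holds since Lemma~\ref{lemma:dimS} gives $\dim(\S) \le \delta+1 \le d$. Then I would prove $\beta$ is bipartite (part (i)), from which part (ii) follows by a direct application of Lemma~\ref{lemma:loosely_noncrossing}.

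For (i), since paws are forcers for paw-friendly graphs by the definition of paw-friendly, Lemma~\ref{lemma:bag_no_forcers} implies the induced subgraph $\beta$ is paw-free. Moreover $\beta$ is connected (Lemma~\ref{lemma:star_free_bag}(ii)) and perfect (as an induced subgraph of $G$). A classical theorem of Olariu states that every connected paw-free graph is either triangle-free or complete multipartite. If $\beta$ is triangle-free, then perfectness gives $\chi(\beta) = \omega(\beta) \le 2$, so $\beta$ is bipartite.

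The main obstacle will be ruling out the complete multipartite case. The key observation is that any non-empty connected complete multipartite graph has radius at most $2$: every vertex is adjacent to all vertices outside its part, and reaches the other vertices of its own part in two steps via another part. Hence $V(\beta) \subseteq N_\beta^2[v]$ for some $v$, i.e.\ $V(\beta)$ is $2$-bounded in $\beta$. But $V(\beta)$ is vacuously a $(w_\beta, c)$-balanced separator of $\beta$ (the complement is empty), contradicting Lemma~\ref{lemma:star_free_bag}(iv), which combined with $d - \dim(\S) \ge (\delta+3) - (\delta+1) = 2$ forbids any $2$-bounded $(w_\beta, c)$-balanced separator. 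Thus $\beta$ is bipartite.

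For (ii), let $(X_1, X_2)$ be a bipartition of $\beta = X$. Each $X_i$ is an independent set in $\beta$, and therefore in $G$. Since $X$ is the set of minimal elements of the partial order $\le_A$ from Lemma~\ref{lemma:partial-order}, its elements form an antichain, so any distinct $u, v \in X_i$ are both non-adjacent and incomparable with respect to $\le_A$. Lemma~\ref{lemma:loosely_noncrossing} then yields that $S_u$ and $S_v$ are loosely non-crossing for every such pair, so $\S_i$ is loosely laminar.
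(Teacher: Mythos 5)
Your proof is correct and follows essentially the same route as the paper: Olariu's characterization of connected paw-free graphs, ruling out the complete multipartite case by noting it would yield a $2$-bounded $(w_\beta,c)$-balanced separator contradicting Lemma~\ref{lemma:star_free_bag}(iv), then using perfectness of the triangle-free graph to conclude bipartiteness and Lemma~\ref{lemma:beta_is_X} for $\beta = X$, and finally applying Lemma~\ref{lemma:loosely_noncrossing} to the antichain $X$ for part (ii). You merely spell out a couple of details (the radius-$2$ argument and the non-adjacency/incomparability hypotheses of Lemma~\ref{lemma:loosely_noncrossing}) slightly more explicitly than the paper does.
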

\begin{proof}
By Lemma \ref{lemma:dimS}, $k = \dim(\S) \leq \delta + 1$, and so $d \geq k + 2$.  By Lemma \ref{lemma:star_free_bag}, $\beta$ is connected, and by Lemma \ref{lemma:bag_no_forcers}, $\beta$ is paw-free since paws are forcers for $G$ by definition. Every connected paw-free graph is either triangle-free or complete multipartite \cite{O-paw-free}. Suppose $\beta$ is complete multipartite. Then, there exists $v \in \beta$ such that $\beta \subseteq N^2[v]$, so $\beta$ is a 2-bounded $(w_{\beta}, c)$-balanced separator for $\beta$, where $w_\beta$ is defined in Lemma \ref{lemma:star_free_bag}. However, by Lemma \ref{lemma:star_free_bag}, $\beta$ does not have a $(d-k)$-bounded $(w_\beta, c)$-balanced separator, a contradiction since $d \geq k + 2$. Therefore, $\beta$ is triangle-free. Since $\beta$ is perfect, it follows that $\beta$ is bipartite. By Lemma \ref{lemma:beta_is_X}, $\beta = X$. This proves (i). 

Let $\S_1, \S_2$ be as in (ii). By Lemma \ref{lemma:loosely_noncrossing}, $\S_1$ and $\S_2$ are loosely laminar. This proves (ii). 
\end{proof}

For the remainder of the paper, if $G$ is a connected paw-friendly graph with no $d$-bounded $(w, c)$-balanced separator and $\beta$ is the star-free bag of $G$, we let $(X_1, X_2)$ be a bipartition of $\beta$, and we let $(\S_1, \S_2)$ be the corresponding partition of $\S$ into loosely laminar sequences; i.e. $\S_i = \{S_x : x \in X_i\}$. (Note that this implies that $\dim(\S) \leq 2$.) Let $\gamma$ denote the central bag for $\S_2$, i.e.,
\begin{equation*}
\hspace{5cm}
\gamma = \bigcap_{S \in \S_2} (B(S) \cup C(S)).
\end{equation*}

Since $\beta = \bigcap_{S \in \S_1 \cup \S_2} (B(S) \cup C(S))$, we have $\beta \subseteq \gamma \subseteq G$. Therefore, $\gamma$ is called the {\em intermediate bag} of $G$. 
In the following lemma, we summarize several important properties of $\gamma$ and $\beta$.

\begin{lemma}
\label{lemma:properties}
Let $\gamma$ be the intermediate bag of $G$. Then,
\begin{enumerate}[(i)]
\itemsep -0.2em
    \item $\gamma$ and $\beta$ are connected,
    \item $\gamma$ has no $(d-1)$-bounded $(w_{\S_2}, c)$-balanced separator and $\beta$ has no $(d-2)$-bounded $(w_{\beta}, c)$-balanced separator,
    \item $G \setminus \gamma \subseteq \bigcup_{x_2 \in X_2} A_{x_2}$ and $ \gamma \setminus \beta \subseteq \bigcup_{x_1 \in X_1} A_{x_1}$. 
\end{enumerate}
\end{lemma}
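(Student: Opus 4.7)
The plan is to prove all three parts by successively applying the structural results of Sections~\ref{sec:central_bags} and~\ref{sec:star_separations} to the two loosely laminar pieces $\S_2$ and $\S_1 \mid \gamma$, which is precisely the two-step analogue of the inductive scheme inside the proof of Lemma~\ref{lemma:star_free_bag}.

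First I would handle the ``$\gamma$'' halves of (i) and (ii). By Lemma~\ref{lemma:dim_is_2}(ii) the sequence $\S_2$ is loosely laminar, and by Lemma~\ref{lemma:canonical_unique} every vertex is the anchor of at most one separation in $\S_2$. For every $x_2 \in X_2$ the set $C_{x_2}$ is connected, since $C_{x_2} \subseteq N[x_2]$ contains $x_2$ which is then complete to $C_{x_2} \setminus \{x_2\}$. Therefore Lemma~\ref{lemma:central_bag_1}(ii) applied to $G$ and $\S_2$ gives that $\gamma = \beta_{\S_2}$ is connected, and Lemma~\ref{lemma:central_bag_2} gives that $\gamma$ has no $(d-1)$-bounded $(w_{\S_2}, c)$-balanced separator.

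Next I would handle the ``$\beta$'' halves of (i) and (ii) by re-running the same pair of lemmas with $\gamma$ replacing $G$, and with the restricted sequence $\S_1 \mid \gamma = \{S \cap \gamma : S \in \S_1\}$ replacing $\S_2$. Exactly as in the inductive step of the proof of Lemma~\ref{lemma:star_free_bag}, each element of $\S_1 \mid \gamma$ remains a star separation of $\gamma$ with the same center (using that $X_1 \subseteq \gamma$ by Lemma~\ref{lemma:star_free_bag}(i)), the sequence $\S_1 \mid \gamma$ is loosely laminar in $\gamma$, and $\beta = \bigcap_{S \in \S_1 \mid \gamma}(B(S) \cup C(S))$ is the central bag for $\S_1 \mid \gamma$ inside $\gamma$. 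Viewed as the new ambient graph, $\gamma$ inherits maximum degree at most $\delta$ from $G$, carries the weight function $w_{\S_2}$ of total weight $1$ by Lemma~\ref{lemma:central_bag_1}(iii), and by the previous paragraph has no $(d-1)$-bounded $(w_{\S_2}, c)$-balanced separator. Thus Lemma~\ref{lemma:central_bag_1}(ii) and Lemma~\ref{lemma:central_bag_2} apply with $d$ replaced by $d-1$, yielding that $\beta$ is connected and has no $(d-2)$-bounded $(w_\beta, c)$-balanced separator.

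Finally, (iii) is pure set arithmetic using the identity $V(G) \setminus \beta_\S = \bigcup_{S \in \S}A(S)$ recorded in Section~\ref{sec:central_bags}. Applied to $\S_2$ this reads $G \setminus \gamma = \bigcup_{x_2 \in X_2} A_{x_2}$. For the second inclusion, $\beta = \gamma \cap \bigcap_{S \in \S_1}(B(S) \cup C(S))$, so $\gamma \setminus \beta = \gamma \cap \bigcup_{S \in \S_1} A(S) \subseteq \bigcup_{x_1 \in X_1} A_{x_1}$. The only mild obstacle is verifying cleanly that the recursive application of Lemmas~\ref{lemma:central_bag_1} and~\ref{lemma:central_bag_2} is legitimate with $\gamma$ in the role of the ambient graph and $d-1$ in the role of $d$; once $\S_1 \mid \gamma$ has been identified as a loosely laminar sequence of star separations of $\gamma$, this is a mechanical repetition of the first step.
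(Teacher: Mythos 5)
Your proposal is correct and follows essentially the same route as the paper: the paper obtains the $\gamma$ statements from Lemmas~\ref{lemma:central_bag_1} and~\ref{lemma:central_bag_2} applied to $\S_2$, gets the $\beta$ statements by citing Lemma~\ref{lemma:star_free_bag} directly (whose proof is exactly the two-step unfolding with $\S_1\mid\gamma$ that you carry out by hand), and derives (iii) from the identity $V(G)\setminus\beta_\S=\bigcup_{S\in\S}A(S)$. The checks you flag — connectivity of each $C_{x_2}$, skewness via Lemma~\ref{lemma:skewed}, and the legitimacy of replacing $(G,d)$ by $(\gamma,d-1)$ — are precisely the verifications done inside the proof of Lemma~\ref{lemma:star_free_bag}, so nothing is missing.
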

\begin{proof}
Since $\gamma$ is the central bag for $\S_2$, it follows from Lemma \ref{lemma:central_bag_1} that $\gamma$ is connected. By Lemma~\ref{lemma:star_free_bag}, $\beta$ is connected. Moreover, by Lemma \ref{lemma:central_bag_2}, $\gamma$ has no $(d-1)$-bounded $(w_{\S_2}, c)$-balanced separator, and by Lemma \ref{lemma:star_free_bag}, $\beta$ has no $(d-2)$-bounded $(w_{\beta}, c)$-balanced separator. Finally, since $\gamma$ is the central bag for $\S_2$ and $\beta$ is the central bag in $\gamma$ for $\S_1\mid \gamma = \{S \cap \gamma: S \in \S_1\}$, property (iii) holds. 
\end{proof}

In the next section, $\gamma$ and $\beta$ are integral to proving structural results about $G$. We also define the {\em core bag} of $G$, denoted $\R$, as follows:
\begin{equation*}
\hspace{5cm}
\R = \beta \cup \left(\bigcup_{x_2 \in X_2} C_{x_2}\right). 
\end{equation*}

\begin{figure}[h]
\centering
    \begin{tikzpicture}
        \draw[rounded corners, pattern=crosshatch dots, pattern color = gray!50] (0.75, 2) rectangle (-0.75,-2);

        \draw[rounded corners, pattern=crosshatch dots, pattern color = gray!50] (2, 1.9) rectangle (1.2, 1.1);

        \draw[rounded corners, pattern=crosshatch dots, pattern color = gray!50] (2, 0.9) rectangle (1.2, 0.1);

        \draw[rounded corners, pattern=crosshatch dots, pattern color = gray!50] (2, -0.1) rectangle (1.2, -0.9);

        \draw[rounded corners, pattern=crosshatch dots, pattern color = gray!50] (2, -1.1) rectangle (1.2, -1.9);

        \draw[rounded corners, pattern=crosshatch dots, pattern color = gray!50] (3.25, 1.9) rectangle (2.45, 1.1);

        \draw[rounded corners, pattern=crosshatch dots, pattern color = gray!50] (3.25, 0.9) rectangle (2.45, 0.1);

        \draw[rounded corners, pattern=crosshatch dots, pattern color = gray!50] (3.25, -0.1) rectangle (2.45, -0.9);

        \draw[rounded corners, pattern=crosshatch dots, pattern color = gray!50] (3.25, -1.1) rectangle (2.45, -1.9);

        \draw (0, 2)--(0, -2); 

        \coordinate[label={$X_1$}] (E) at (-.375,-2.6);

        \coordinate[label={$X_2$}] (E) at (.375,-2.6);

        \coordinate[label={$\gamma \setminus \beta$}] (E) at (1.6,-2.6);

        \coordinate[label={$G \setminus \gamma$}] (E) at (2.85,-2.6);

        \node[draw, circle, inner sep=2pt, fill=black] at (.375, 1) (1) {};   

        \node[draw, circle, inner sep=2pt, fill=black] at (1.4, 1.5) (2) {}; 

        \node[draw, circle, inner sep=2pt, fill=black] at (1.4, 0.4) (3) {};   
        \node[draw, circle, inner sep=2pt, fill=black] at (.375, -1) (4) {};   

        \node[draw, circle, inner sep=2pt, fill=black] at (1.4, -1.5) (5) {}; 

        \node[draw, circle, inner sep=2pt, fill=black] at (1.4,- 0.4) (6) {};   

        \draw (1)--(2); 
        \draw (1)--(3);
        \draw (4)--(5); 
        \draw (4)--(6);
    
    \end{tikzpicture}
    \caption{A drawing showing the central, core, and intermediate bags. Recall that $G \setminus \gamma = \bigcup_{x \in X_2} A_{x}$, $\gamma \setminus \beta \subseteq \bigcup_{x \in X_1} A_x$, $\mathcal{R} = \beta \cup \bigcup_{x \in X_2} C_{x}$, and $\beta = X_1 \cup X_2$.}
\end{figure}

The following lemma contains two useful facts about the composition of $\R$.

\begin{lemma}
Let $\gamma$ be the intermediate bag of $G$ and let $\R$ be the core bag of $G$. Then, 
\begin{enumerate}[(i)]
\itemsep -0.2em
    \item  $\R \subseteq \gamma$, and

    \item every connected component of $G \setminus \R$ is a subset of $A_x$ for some $x \in \beta$.
\end{enumerate}
\label{lemma:structure_of_R}
\end{lemma}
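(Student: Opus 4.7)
The plan is to handle (i) by pure bookkeeping plus one application of Lemma \ref{lemma:central_bag_1}(i), and to handle (ii) by splitting on whether the component $D$ meets $G \setminus \gamma$ or is entirely inside $\gamma$, then trapping $D$ inside a single $A_x$ using that the relevant neighborhood lies in $\R$. For (i), the containment $\beta \subseteq \gamma$ is immediate from $\beta = \bigcap_{S \in \S_1 \cup \S_2}(B(S) \cup C(S)) \subseteq \bigcap_{S \in \S_2}(B(S) \cup C(S)) = \gamma$, and since $\S_2$ is a loosely laminar sequence of star separations with central bag $\gamma$, Lemma \ref{lemma:central_bag_1}(i) yields $C_{x_2} \subseteq \gamma$ for every $x_2 \in X_2$; together these give $\R \subseteq \gamma$.

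For (ii), I fix a connected component $D$ of $G \setminus \R$. If $D \not\subseteq \gamma$, I pick $v \in D \setminus \gamma$, which by Lemma \ref{lemma:properties}(iii) lies in $A_{x_2}$ for some $x_2 \in X_2$; since $N_G(A_{x_2}) \subseteq C_{x_2} \subseteq \R$, no vertex of $D$ can escape $A_{x_2}$, forcing $D \subseteq A_{x_2}$ with $x_2 \in X_2 \subseteq \beta$. If instead $D \subseteq \gamma$, then $\beta \subseteq \R$ gives $D \subseteq \gamma \setminus \beta \subseteq \bigcup_{x_1 \in X_1} A_{x_1}$ by Lemma \ref{lemma:properties}(iii), so any $v \in D$ lies in $A_{x_1}$ for some $x_1 \in X_1$.

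The main step in this second sub-case is to show $C_{x_1} \cap \gamma \subseteq \R$. I plan to use that the restriction $\S_1 \mid \gamma = \{S \cap \gamma \mid S \in \S_1\}$, exactly as in the proof of Lemma \ref{lemma:star_free_bag}, is a loosely laminar sequence of star separations of $\gamma$ whose central bag is exactly $\beta$; then Lemma \ref{lemma:central_bag_1}(i) applied inside $\gamma$ yields $C_{x_1} \cap \gamma \subseteq \beta \subseteq \R$. Consequently $N_\gamma(A_{x_1} \cap \gamma) \subseteq \R$, so $D$, being connected in $\gamma \setminus \R$ and meeting $A_{x_1}$, lies entirely in $A_{x_1}$ with $x_1 \in \beta$. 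The only subtlety I anticipate is tracking that the restriction-to-$\gamma$ of $\S_1$ is indeed a sequence of star separations of $\gamma$ with central bag $\beta$; everything else is a direct application of the central-bag machinery from Sections \ref{sec:central_bags} and \ref{sec:star_separations}.
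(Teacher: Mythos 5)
Your proof is correct and follows essentially the same approach as the paper: part (i) via Lemma~\ref{lemma:central_bag_1}(i), and part (ii) by a case split that is equivalent to the paper's (your condition $D \not\subseteq \gamma$ is the same as $D \cap A_{x_2} \neq \emptyset$ for some $x_2 \in X_2$, since $A_{x_2} \cap \gamma = \emptyset$). Your treatment of the second sub-case is actually slightly more explicit than the paper's terse ``so $D \subseteq A_{x_1}$ for some $x_1 \in X_1$'' — you justify this by applying Lemma~\ref{lemma:central_bag_1}(i) to $\S_1 \mid \gamma$ to get $C_{x_1} \cap \gamma \subseteq \beta \subseteq \R$ and then trapping $D$, whereas the paper implicitly relies on the fact that a connected subset of $\bigcup_{x_1}A_{x_1}$ must lie in a single $A_{x_1}$ because $\S_1$ is loosely laminar; both routes are sound and equivalent in substance.
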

\begin{proof}
Recall that $(X_1, X_2)$ is the partition of $\beta$ into independent sets. By definition, $\beta \subseteq \gamma$. Further, by Lemma \ref{lemma:central_bag_1}, $C_{x_2} \subseteq \gamma$ for all $x_2 \in X_2$. Therefore, $\R \subseteq \gamma$. This proves (i).

Now, we prove (ii). Let $D$ be a connected component of $G \setminus \R$. Suppose that for some $x_2 \in X_2$, $D \cap A_{x_2} \neq \emptyset$, and let $D'$ be a connected component of $A_{x_2}$ such that $D' \cap D \neq \emptyset$. If $D \not \subseteq D'$, then there exists $v \in D \setminus D'$ that has a neighbor in $D'$. Then, $v \in C_{x_2}$, but $C_{x_2} \subseteq \R$, a contradiction. So $D \subseteq D'$, and (ii) holds.

Thus, we may assume that $D \cap A_{x_2} = \emptyset$ for all $x_2 \in X_2$. Since by Lemma \ref{lemma:properties} $G \setminus \gamma \subseteq \bigcup_{x_2 \in X_2} A_{x_2}$, it follows that $D \subseteq \gamma \setminus \beta$. Since $\gamma \setminus \beta \subseteq \bigcup_{x \in X_1}A_x$ and since by Lemma \ref{lemma:dim_is_2} $\mathcal{S}_1$ is loosely laminar, it follows that $D \subseteq A_{x_1}$ for some $x_1 \in X_1$.
\end{proof}

Note that (ii) of Lemma \ref{lemma:structure_of_R} shows that every connected component $D$ of $G \setminus \R$ satisfies $|N(D)| \leq d$ and $|D| \leq cn$. Therefore, if $\R$ is the union of iterated even sets, $\R$ is an even set separator of $G$. In the next section, we prove that $\R$ is the union of iterated even sets.    

\section{The core bag $\R$ is a $(\delta^2+2)$-iterated even set}
\label{sec:R_evensetseparator}

In this section, we prove that if $G$ is a paw-friendly graph with bounded degree, then the core bag $\R$ of $G$ is a $(\delta^2 + 2)$-iterated even set in $G$. Let $(X_1, X_2)$ be the bipartition of $\beta$. Recall that $\R = \beta \cup \left(\bigcup_{x_2 \in X_2} C_{x_2}\right)$. By Lemma \ref{lemma:beta_bipartite}, $\beta$ is bipartite. In Section \ref{sec:even_sets_in_beta}, we show that $X_1$ and $X_2$ are even sets in $G$. In Section \ref{sec:even_sets_Rminusbeta}, we show that $\R \setminus \beta = \bigcup_{x_2 \in X_2} C_{x_2}$ is a $\delta^2$-iterated even set in $G \setminus \beta$.

We begin with three important lemmas.

\begin{lemma}
\label{lemma:even_paths}
Let $c \in [\frac{1}{2}, 1)$ and let $d$ be a positive integer. Let $G$ be an odd-hole-free graph with no $d$-bounded $(w, c)$-balanced separator. Let $v \in V(G)$ and let $S_v = (A_v, C_v, B_v)$ be the canonical star separation for $v$. Let $c_1, c_2 \in C_v$ be such that $c_1$ is not adjacent to $c_2$, and let $P$ be a path from $c_1$ to $c_2$ with $P^* \subseteq B_v$ or $P^* \subseteq A_v$. Then, $P$ is even. 
\end{lemma}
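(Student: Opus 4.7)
The plan is to split on the two hypotheses on $P^*$ and bootstrap the easier case into the harder. As a preliminary observation, $C_v \subseteq N[v]$ and $v$ is adjacent to every other vertex of $C_v$; since $c_1$ and $c_2$ are non-adjacent, this forces $c_1, c_2 \in N(v)$ (neither can equal $v$).

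In the case $P^* \subseteq B_v$, the vertices of $B_v$ lie outside $N[v]$, so $v$ has no neighbor in $P^*$. Then $C := c_1 \dd P \dd c_2 \dd v \dd c_1$ is an induced cycle: $P$ itself is induced, $c_1 c_2$ is a non-edge, and $v$ is adjacent on $C$ to precisely $c_1$ and $c_2$. Since $P$ has length at least $2$, $C$ is a hole. Odd-hole-freeness gives that the length of $C$ is even, hence so is the length of $P$.

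The case $P^* \subseteq A_v$ is the main obstacle: here $v$ may have neighbors inside $P^*$, because $A_v$ can contain those vertices of $N(v)$ that have no neighbor in $B_v$, so closing $P$ with $v$ need not produce a hole. The plan is instead to produce an auxiliary induced path $Q$ from $c_1$ to $c_2$ whose interior lies in $B_v$, deduce that $Q$ has even length by the previous case, and then glue $P$ and $Q$ into an induced cycle whose length parity determines that of $P$. Such a $Q$ exists because each of $c_1, c_2$ has a neighbor in $B_v$ by the definition of $C_v$, $B_v$ is connected, and $c_1 c_2$ is a non-edge; hence $G[\{c_1,c_2\} \cup B_v]$ is connected and any shortest $c_1 c_2$-path in this subgraph is induced, has length at least $2$, and has interior in $B_v$.

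Finally, let $C' := c_1 \dd P \dd c_2 \dd Q \dd c_1$. Since $A_v$ is anticomplete to $B_v$ by the definition of a separation, $P^*$ and $Q^*$ are anticomplete; this, together with $P$ and $Q$ being induced and $c_1 c_2$ being a non-edge, shows that $C'$ is an induced cycle. Its length is the sum of the lengths of $P$ and $Q$, which is at least $4$, so $C'$ is a hole. Odd-hole-freeness yields that this sum is even, and combined with the evenness of $Q$ from the previous case, $P$ is even as well.
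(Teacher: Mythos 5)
Your proof is correct and takes essentially the same route as the paper: handle $P^* \subseteq B_v$ by closing the hole through $v$, then handle $P^* \subseteq A_v$ by routing an auxiliary induced path $Q$ through $B_v$ (which exists since $c_1, c_2$ have neighbors in the connected set $B_v$), applying the first case to $Q$, and gluing $P$ and $Q$ into a hole using the anticompleteness of $A_v$ and $B_v$. The only difference is cosmetic: you invoke odd-hole-freeness directly, whereas the paper's text says ``since $G$ is perfect,'' which is a slightly stronger hypothesis than the lemma actually assumes.
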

\begin{proof}
Since $c_1$ is not adjacent to $c_2$, $\{c_1, c_2\} \subseteq C_v \setminus \{v\}$. Suppose $P^* \subseteq B_v$. Since $v$ is anticomplete to $B_v$, it follows that $v \dd c_1 \dd P \dd c_2 \dd v$ is a hole of $G$. Since $G$ is perfect, every hole in $G$ is even. Therefore, $P$ is even. 

Now, suppose $P^* \subseteq A_v$. Let $Q$ be a path from $c_1$ to $c_2$ through $B_v$. The path $Q$ exists since $c_1$ and $c_2$ have neighbors in $B_v$ and $B_v$ is connected. Note that $Q$ is even. Then, since $c_1 \dd P \dd c_2 \dd Q \dd c_1$ is an even hole of $G$, it follows that $P$ is even. 
\end{proof}

Let $v \in V(G)$ and let $P = p_1 \dd \hdots \dd p_k$ be a path in $G \setminus \{v\}$ such that $p_1, p_k \in B_v \cup C_v$. Let $i$ be minimum and $j$ be maximum such that $v$ is adjacent to $p_i$ and $p_j$. Note that since $p_1, p_k \in B_v \cup C_v$, we have $p_i, p_j \in C_v$. The {\em span of $v$ in $P$} is the subpath $p_i \dd P \dd p_j$. If the span of $v$ in $P$ has odd length greater than one, we say that $v$ has {\em wide odd span in $P$}; otherwise, if the span is greater than one, we say that $v$ has {\em even span in $P$}.

\begin{lemma}
\label{lemma:exists_a_paw}
Let $c \in [\frac{1}{2}, 1)$ and let $\delta, d$ be positive integers with $d > 2$. Let $G$ be a connected paw-friendly graph with maximum degree $\delta$ and no $d$-bounded $(w, c)$-balanced separator. Let $\S$ be a loosely laminar sequence of canonical star separations, let $\beta_\S$ be the central bag for $\S$, and let $H$ be an induced subgraph of $G$. Let $v_1, v_2 \in \beta_\S \cap V(H)$ be an even pair in $\beta_\S \cap H$, and let $P = p_1 \dd \hdots \dd p_k$ be an odd path  from $v_1$ to $v_2$ in $H$, with $p_1 = v_1$ and $p_k = v_2$. Suppose $u \in V(G)$ is such that $S_u \in \S$ and $u$ has a wide odd span in $P$. Then, there exist $p_q, p_r, p_s \in P \cap C_u$ with $q < s - 2$ and either $r =  q + 1$ or $r = s-1$, such that $\{u, p_q, p_r, p_s\}$ is a paw of $G$ with edge set $\{up_q, up_r, up_s, p_qp_r\}$ or $\{up_q, up_r, up_s, p_rp_s\}$.
\end{lemma}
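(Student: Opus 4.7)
The plan is to analyze the index set $I_u = \{m \in \{1,\ldots,k\} : up_m \in E(G)\}$ of $u$'s neighbors along $P$. Write $i = \min I_u$ and $j = \max I_u$, so the hypothesis that $u$ has wide odd span in $P$ means $j - i$ is odd and $j - i \geq 3$. The main structural ingredient is the following odd-hole observation, which I will establish first: if $i_\ell < i_{\ell+1}$ are consecutive elements of $I_u$ (in sorted order) with $i_{\ell+1} - i_\ell \geq 3$ odd, then $u \dd p_{i_\ell} \dd p_{i_\ell + 1} \dd \cdots \dd p_{i_{\ell+1}} \dd u$ is an induced cycle in $G$, has odd length $(i_{\ell+1} - i_\ell) + 2 \geq 5$, and is thus an odd hole contradicting perfectness of $G$ (paw-friendly graphs are perfect). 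Inducedness uses that $P$ is induced in $H \subseteq G$ and that $u$ has no neighbors among $p_{i_\ell + 1}, \ldots, p_{i_{\ell+1} - 1}$ by the consecutiveness of $i_\ell, i_{\ell+1}$ in $I_u$.

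Applied with $|I_u| = 2$ (where the unique gap equals $j - i \geq 3$ odd), this observation immediately forces $|I_u| \geq 3$. Writing $I_u = \{i_1 < \cdots < i_t\}$ with $t \geq 3$, each gap $i_{\ell+1} - i_\ell$ is then either $1$ or even. Since $\sum_\ell (i_{\ell+1} - i_\ell) = j - i$ is odd, an odd (hence positive) number of gaps equal $1$, so $I_u$ contains at least one consecutive pair $\{a, a+1\}$. Combining this consecutive pair with the span condition $j - i \geq 3$, I then find a third neighbor $b \in I_u$ for the paw by a short case analysis: if $\max I_u \geq a+3$ I take $(q,r,s) = (a, a+1, b)$ with $b \geq a+3$; if $\min I_u \leq a-2$ I take $(q,r,s) = (b, a, a+1)$ with $b \leq a-2$; and the only remaining possibility is $I_u = \{a-1, a, a+1, a+2\}$, for which $(q,r,s) = (a-1, a, a+2)$ works. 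In each case the fact that $P$ is induced guarantees $p_q p_s$ is not an edge and exactly one of $p_q p_r$, $p_r p_s$ is, so $\{u, p_q, p_r, p_s\}$ forms a paw of the required form with edge set $\{up_q, up_r, up_s, p_q p_r\}$ or $\{up_q, up_r, up_s, p_r p_s\}$.

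The main obstacle I anticipate is the final verification that the selected paw vertices lie in $C_u$ rather than in $A_u \cap N(u)$. A neighbor of $u$ belongs to $C_u$ precisely when it has a neighbor in $B_u$, and in principle a neighbor of $u$ could lie in $A_u \cap N(u)$ (no neighbor in $B_u$) without contradicting anything stated so far. To handle this I plan to exploit that $v_1, v_2 \in \beta_\S \subseteq B_u \cup C_u$ together with the fact that $A_u$ is anticomplete to $B_u$, so any subpath of $P$ joining a vertex of $A_u$ to a vertex of $B_u$ must traverse $C_u$. Using this, I expect that if one of the candidate paw vertices $p_m$ lay in $A_u$ one could either substitute it by a nearby $P$-vertex that does lie in $C_u$ without destroying the paw structure, or else produce an additional induced odd cycle (invoking Lemma~\ref{lemma:even_paths} on a suitable subpath to control parity) and reach a contradiction. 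The crux of the lemma is the odd-hole/perfectness argument in the first paragraph; the $C_u$-membership is essentially bookkeeping, but making it rigorous cleanly is the delicate step I expect to require the most care.
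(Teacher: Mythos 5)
Your gap-parity counting and the three-case selection of the triple are the right combinatorial skeleton, and the counting itself matches the paper's. But there is a genuine gap, and it is exactly the step you defer at the end: you run the parity argument on $I_u=\{m : up_m\in E(G)\}$, whereas the lemma requires the three paw vertices to lie in $C_u$, and a neighbor of $u$ on $P$ need not lie in $C_u$ --- it lies in $C_u$ only if it also has a neighbor in $B_u$, and otherwise it lies in $A_u$. This is not ``essentially bookkeeping''; it is the crux, and it is why the paper runs the entire argument on the set $P\cap C_u$ rather than on $N(u)\cap V(P)$, using Lemma~\ref{lemma:even_paths} rather than the bare odd-hole observation. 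Concretely, if $u$ is adjacent on $P$ exactly to $p_3,p_4,p_8$ and $p_4\in A_u$, your argument terminates with the triple $(p_3,p_4,p_8)$, which is a paw but does not satisfy $p_q,p_r,p_s\in C_u$; nothing in your first two paragraphs excludes this configuration, and your proposed repair of ``substituting a nearby vertex of $C_u$'' has no justification, since you have established no supply of $C_u$-vertices in the required positions.

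The correct repair is the paper's argument: list the vertices of $P\cap C_u$ inside the span (its two ends $p_i,p_j$ lie in $C_u$, as noted where the span is defined, because $v_1,v_2\in\beta_{\S}\subseteq B_u\cup C_u$), and observe that the subpath of $P$ between two consecutive such vertices has interior disjoint from $C_u$, hence --- the interior being connected and $A_u$ anticomplete to $B_u$ --- entirely inside $A_u$ or entirely inside $B_u$; Lemma~\ref{lemma:even_paths} then forces each gap to be $1$ or even. Your odd-hole observation does not yield this version: between two consecutive $C_u$-vertices of $P$ there may sit vertices of $N(u)\cap A_u$, so the cycle closed through $u$ need not be induced, and the $A_u$-interior case genuinely requires the detour through the connected set $B_u$ that Lemma~\ref{lemma:even_paths} provides (available precisely because both endpoints are in $C_u$). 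Once the parity statement is proved for $P\cap C_u$, your counting from the second paragraph applies verbatim to that set, all three selected vertices are automatically in $C_u$, and the proof is complete (in the example above it also shows the offending configuration simply cannot occur).
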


\begin{proof}
Let $p_i \dd P \dd p_j$ be the span of $u$ in $P$. Since $u$ has odd span in $P$, it follows that $j-i$ is odd. By Lemma \ref{lemma:even_paths}, every path between two vertices of $C_{u}$ through $A_{u}$ or through $B_{u}$ is even. Therefore, either there exist $q \in \{i,\ldots,j-3\}$ and $r \in \{q+3, \hdots, j\}$ such that $p_q,p_{q+1},p_r \in  C_{u} \cap H$, and $p_{q+2}, \ldots,
p_{r-1} \not \in C_{u} \cap H$, or there exist $q \in \{i+2, \ldots, j-1\}$ and $r \in \{i, \ldots, q-2\}$ such that $p_q, p_{q+1}, p_r \in C_{u} \cap H$ and $p_{r+1}, \hdots, p_{q-1} \not \in C_u \cap H$. 
\end{proof}

\begin{lemma}
Let $c \in [\frac{1}{2}, 1)$ and let $d$ be a positive integer. Let $G$ be a paw-friendly graph with no $d$-bounded $(w, c)$-balanced separator. Let $x, y \in V(G)$ be such that $x \in B_y \cup C_y$ and $y \in B_x \cup C_x$. Let $P = p_1 \dd \hdots \dd p_k$ be a path from $x$ to $y$ (so $p_1 = x$ and $p_k = y$), and suppose $p_q, p_r, p_s \in P$ and $u \in V(G) \setminus P$ are such that $p_q, p_r, p_s \in C_{u}$, $q < s - 2$, and either $r = q + 1$ or $r = s - 1$. Then, either $p_r \leq_A x$ or $p_r \leq_A y$.
\label{lemma:key_lemma_paw_shield}
\end{lemma}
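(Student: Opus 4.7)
The plan is to recognize that $\{u, p_q, p_r, p_s\}$ induces a paw with apex $u$ and a triangle containing $p_r$, apply paw-friendliness to force $p_r$ to break a two-element set drawn from $P$, and then read off from the components of $G \setminus N[p_r]$ that $x$ or $y$ must lie in $A_{p_r}$. First, I would verify the paw structure: since $p_q, p_r, p_s \in C_u \setminus \{u\}$, all three are neighbors of $u$. In the case $r = q+1$, the vertices $p_q, p_r$ are consecutive on the induced path $P$, hence adjacent, while $s \geq q+3$ combined with induced-ness of $P$ forces $p_s$ to be non-adjacent to both $p_q$ and $p_r$; thus $G[\{u,p_q,p_r,p_s\}]$ is a paw with triangle $\{u,p_q,p_r\}$ and pendant $p_s$ attached to $u$. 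The case $r = s-1$ is symmetric, with triangle $\{u,p_r,p_s\}$ and pendant $p_q$. Paw-friendliness with apex $u$ then yields one of three breaking statements.

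Next, I would eliminate the two non-$p_r$ alternatives. The option ``$u$ breaks $\{p_q,p_r,p_s\}$'' fails because each of $p_q, p_r, p_s \in C_u \setminus \{u\}$ has a neighbor in $B_u$ by definition of $C_u$, and $B_u$ is a single connected component of $G \setminus N[u]$, so $\{p_q,p_r,p_s\} \subseteq N[B_u]$. To eliminate the remaining non-$p_r$ option (assume $r = q+1$; the other case is symmetric), namely ``$p_q$ breaks $\{p_r,p_s\}$'', I would consider the subpath $p_{r+1} \dd \cdots \dd p_s$ of $P$. Its vertices have indices $\geq r+1 > q+1$, so by induced-ness of $P$ none of them is equal or adjacent to $p_q = p_{r-1}$; hence this connected subpath lies in a single component $D$ of $G \setminus N[p_q]$, giving $p_s \in D$ and $p_r \in N[D]$ via the edge $p_r p_{r+1}$, contradicting the breaking property. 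The case $r = s-1$ is handled analogously using the subpath $p_q \dd \cdots \dd p_{r-1}$. Hence $p_r$ breaks $\{p_q, p_s\}$.

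Finally, I would translate this into the conclusion. Consider $G \setminus N[p_r]$; on $V(P)$ it removes exactly $\{p_{r-1}, p_r, p_{r+1}\}$, leaving two tails $P_x := p_1 \dd \cdots \dd p_{r-2}$ and $P_y := p_{r+2} \dd \cdots \dd p_k$ (each possibly empty). Continuing to assume $r = q+1$, in the generic subcase $3 \leq r \leq k-2$ both tails are nonempty; let $D_x, D_y$ be the components of $G \setminus N[p_r]$ containing them. Then $p_q = p_{r-1} \in N[D_x]$ via $p_q p_{r-2}$, and $p_s \in D_y$ (since $s \geq r+2$). Equality $D_x = D_y$ would give $\{p_q, p_s\} \subseteq N[D_x]$, contradicting the breaking property, so $D_x \neq D_y$; by Lemma \ref{lemma:canonical_unique}, at most one of them equals $B_{p_r}$, and the other is contained in $A_{p_r}$, yielding $x \in A_{p_r}$ or $y \in A_{p_r}$. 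The main obstacle is the boundary subcase $r = 2$, where $P_x$ is empty and $x = p_1 = p_q \in N(p_r)$: here the breaking property applied to the component $D_y$ containing $p_s$ and $y$ forces $x \notin N[D_y]$, so if $D_y = B_{p_r}$ then $x$ has no neighbor in $B_{p_r}$ and hence $x \in A_{p_r}$, while if $D_y \neq B_{p_r}$ then $y \in D_y \subseteq A_{p_r}$. The case $r = s-1$ and its boundary $r = k-1$ are handled symmetrically. Once $x \in A_{p_r}$ or $y \in A_{p_r}$ is established, the definition of $\leq_A$ combined with Lemma \ref{lemma:star_shields} and the hypothesis $x \in B_y \cup C_y$, $y \in B_x \cup C_x$ (which rules out $p_r$ being a star twin of both $x$ and $y$ simultaneously) yields $p_r \leq_A x$ or $p_r \leq_A y$.
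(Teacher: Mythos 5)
Your first three steps (establishing the paw, ruling out $u$ as a center because $\{p_q,p_r,p_s\}\subseteq N[B_u]$, and ruling out the non-$p_r$ triangle vertex with the tail-path argument) match the paper's proof exactly. Your fourth step — analyzing the components $D_x, D_y$ of $G\setminus N[p_r]$ directly and invoking uniqueness of $B_{p_r}$ — differs mechanically from the paper, which instead takes WLOG $p_q\in A_{p_r}$ and propagates along $x\dd P\dd p_q$ to get $x\in A_{p_r}$, but both routes correctly arrive at ``$x\in A_{p_r}$ or $y\in A_{p_r}$,'' and your boundary-case analysis is careful and correct.

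The genuine gap is in the last sentence. Knowing (say) $x\in A_{p_r}$ gives $p_r\leq_A x$ \emph{only when} $x$ and $p_r$ are not star twins; if they are star twins, $p_r\leq_A x$ would require $\smallO(p_r)<\smallO(x)$, which is not available, and the definition of $\leq_A$ gives you nothing. Your parenthetical remark — that the hypothesis ``rules out $p_r$ being a star twin of both $x$ and $y$ simultaneously'' — is true but is not the case that needs ruling out: the problematic case is $p_r$ being a star twin of the \emph{one} vertex $x$ (or $y$) that landed in $A_{p_r}$, not of both. What actually closes this case (and what the paper does) is: if $x, p_r$ are star twins then $p_r\in A_x$; propagating along $p_r\dd P\dd y$ (whose vertices from $p_{r+1}$ onward avoid $N[x]$ because $P$ is induced and $r+1\geq 3$) shows $p_{r+1},\dots,p_k\in A_x$, so $y\in A_x$, contradicting the hypothesis $y\in B_x\cup C_x$. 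So the star-twin subcase is in fact impossible, and $p_r\leq_A x$ holds outright. Without this propagation argument your proof does not establish the lemma's conclusion.
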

\begin{proof}
Note that $\{u, p_q, p_r, p_s\}$ is a paw with edge set $\{up_q, up_r, up_s, p_qp_r\}$ or $\{up_q, up_r, up_s, p_rp_s\}$. Since $p_q, p_r, p_s \in C_u$, $u$ does not break $\{p_q, p_r, p_s\}$. Suppose $r = q+1$. Then, $p_r \dd P \dd p_s$ is a path from $p_r$ to $p_s$ such that $p_{r+1} \dd P \dd p_s$ is anticomplete to $p_q$, so $p_q$ does not break $\{p_r, p_s\}$. Therefore, $p_r$ is a center of the paw and $p_r$ breaks $\{p_q, p_s\}$. Now, suppose $r = s-1$. Then, $p_q \dd P \dd p_r$ is a path from $p_q$ to $p_r$ anticomplete to $p_s$, so $p_s$ does not break $\{p_q, p_r\}$. Therefore, $p_r$ is a center of the paw and $p_r$ breaks $\{p_q, p_s\}$. We may assume by symmetry that $p_q \in A_{p_r}$. Then, $x \dd P \dd p_q$ is a path from $x$ to $p_q$ not going through neighbors of $p_r$, so $x \in A_{p_r}$. By Lemma \ref{lemma:star_shields}, either $p_r$ is a shield for $x$ or $x$ and $p_r$ are star twins, so we may assume $x$ and $p_r$ are star twins, for otherwise Lemma \ref{lemma:key_lemma_paw_shield} holds. Consequently, $p_r \in A_x$. Now, $p_r \dd P \dd y$ is a path from $p_r$ to $y$ not going through neighbors of $x$, so $y \in A_x$, a contradiction. 
\end{proof}

The lemmas in the remainder of Section 7 share the same common assumptions as the lemmas in Section 6. \\

\noindent \textbf{Common assumptions for Sections 7.1 and 7.2:} {\em Let $c \in [\frac{1}{2}, 1)$ and let $d, \delta$ be positive integers with $d \geq \delta + 3$. Let $G$ be a connected paw-friendly graph with maximum degree $\delta$, and $w: V(G) \to [0, 1]$ a weight function on $V(G)$ with $w(G) = 1$. Assume that $G$ has no $d$-bounded $(w, c)$-balanced separator. Let $X$ be an $\smallO{}$-star cover of $G$, $\S$ an $\smallO$-star covering sequence of $G$, and $\beta$ the star-free bag of $G$.}
\subsection{Even sets in $\beta$}
\label{sec:even_sets_in_beta}

Let $(X_1, X_2)$ be the bipartition of $\beta$. In this section, we prove that $X_1$ and $X_2$ are even sets in $G$. First, we show that $X_1$ and $X_2$ are even sets in $\gamma$. 

\begin{lemma}
Let $(X_1, X_2)$ be the bipartition of $\beta$. Then, $X_1$ and $X_2$ are even sets in $\gamma$.
\label{lemma:T_even_beta}
\end{lemma}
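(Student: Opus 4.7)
My plan is to prove that $X_2$ is an even set in $\gamma$ by contradiction; the argument for $X_1$ is analogous, working symmetrically with respect to $\S_1$ (using the central bag for $\S_1$ in place of $\gamma$, with attention to the fact that $\gamma$ need not be contained in it). Suppose $v_1, v_2 \in X_2$ are not an even pair in $\gamma$, and let $P = p_1 \dd p_2 \dd \ldots \dd p_k$ be a \emph{shortest} odd induced path in $\gamma$ with $p_1 = v_1$ and $p_k = v_2$, so $k - 1 \geq 3$. Since $S_{v_1}, S_{v_2} \in \S_2$, Lemma~\ref{lemma:central_bag_1}(i) gives $\gamma \subseteq B_{v_i} \cup C_{v_i}$ for $i \in \{1,2\}$. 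Using that $P$ is induced and $p_1 = v_1$, any later vertex $p_\ell \in C_{v_1} \setminus \{v_1\}$ with $\ell \geq 3$ would be adjacent to $v_1 = p_1$, violating induced-ness; hence $p_1, p_2 \in C_{v_1}$ and $p_3, \ldots, p_k \in B_{v_1}$, and symmetrically for $v_2$. In particular $v_1 \in B_{v_2}$ and $v_2 \in B_{v_1}$, since $v_1, v_2$ are non-adjacent (they lie in the same part of the bipartition of $\beta$).

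Since $\beta$ is bipartite with bipartition $(X_1, X_2)$ by Lemma~\ref{lemma:beta_bipartite}(i), every path in $\beta$ between two $X_2$-vertices is even, so $P \not\subseteq \beta$. Hence some $p_m \in V(P)$ lies in $\gamma \setminus \beta$, and by Lemma~\ref{lemma:properties}(iii), $p_m \in A_x$ for some $x \in X_1$. I aim to choose such $x$ with $x \notin V(P)$ (exploiting the shortest-path minimality of $P$ to rule out pathological coincidences), and set $u := x$. Since $v_1, v_2 \in X_2$ are incomparable with $u \in X_1$ under $\leq_A$ (as $X$ is an antichain of minimal elements), they lie in $B_u \cup C_u$. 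A careful parity analysis of $P$ relative to $S_u = (A_u, C_u, B_u)$ --- using that between any two consecutive $C_u$-vertices of $P$ the interior lies entirely in $A_u$ or entirely in $B_u$, so that the corresponding subpath has even length by Lemma~\ref{lemma:even_paths} --- combined with the odd total length of $P$ and the existence of the $A_u$-excursion around $p_m$, forces $u$ to have wide odd span on $P$.

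With such $u$ in hand, I apply Lemma~\ref{lemma:exists_a_paw} to the trivially loosely laminar singleton $\S' = \{S_u\}$ with $H = \gamma$, obtaining $p_q, p_r, p_s \in V(P) \cap C_u$ such that $\{u, p_q, p_r, p_s\}$ is a paw with $q < s - 2$ and $r = q + 1$ or $r = s - 1$. Because the hypotheses $v_1 \in B_{v_2} \cup C_{v_2}$ and $v_2 \in B_{v_1} \cup C_{v_1}$ were verified in the first paragraph, Lemma~\ref{lemma:key_lemma_paw_shield} applies and yields $p_r \leq_A v_1$ or $p_r \leq_A v_2$. Since $1 < r < k$, $p_r$ is interior to $P$ and hence $p_r \neq v_1, v_2$; on the other hand $v_1, v_2 \in X$ are minimal under $\leq_A$, so a strict relation $p_r <_A v_i$ is impossible (inspecting the cases of the definition of $\leq_A$: $p_r$ being a star twin of $v_i$ with smaller $\smallO$-value would give $v_i$ a predecessor, while $v_i \in A_{p_r}$ with $p_r, v_i$ not star twins would give $p_r <_A v_i$ directly). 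This contradiction completes the proof. The main obstacle I anticipate lies in the second paragraph: rigorously ensuring that $u \in X_1 \setminus V(P)$ can be chosen and executing the parity count when $P$ interacts with multiple $A_u$-stretches, or when every $X_1$-witness of the $A$-excursion happens to land on $P$ itself; handling these cases likely requires exploiting the minimality of $P$ more delicately or invoking the bipartite structure of $\beta$ in a more refined way.
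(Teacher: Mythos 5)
The decisive step of your argument --- that some $u$ must have \emph{wide odd span} in $P$ --- is exactly the step you leave unproved, and the ``careful parity analysis'' you sketch does not deliver it. Lemma~\ref{lemma:even_paths} only controls subpaths whose \emph{both} endpoints lie in $C_u$ and are non-adjacent; it says nothing about the prefix of $P$ from $v_1$ to the first neighbor of $u$ (or the corresponding suffix), which can start at a vertex of $B_u$ and end at a vertex of $C_u$ and may be odd. So an odd path $P$ can have even span with respect to every candidate $u$, with the odd parity hiding in the portions of $P$ outside the span. The paper resolves this not by a parity count but by a different extremal choice: $P$ is chosen to minimize $|V(P)\cap(\gamma\setminus\beta)|$, and if the breaker $x_1\in X_1$ has even span $p_i\dd P\dd p_j$, one reroutes through $x_1$ itself to get $P'=p_1\dd P\dd p_i\dd x_1\dd p_j\dd P\dd p_k$, which is still odd and has strictly fewer vertices in $\gamma\setminus\beta$ (since $x_1\in\beta$ while the deleted interior meets $A_{x_1}\subseteq\gamma\setminus\beta$). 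Your choice of a \emph{shortest} odd induced path cannot support this rerouting: when the even span has length exactly two, $P'$ has the same length as $P$, so no contradiction with shortness arises. This is a genuine gap, not a technicality.

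Two further remarks. First, your endgame (Lemma~\ref{lemma:exists_a_paw}, then Lemma~\ref{lemma:key_lemma_paw_shield}, then $\leq_A$-minimality of $v_1,v_2\in X$) is valid but is the mechanism the paper reserves for Lemma~\ref{lemma:T1T2_even}; for the present lemma the paper closes more cheaply, observing that the paw yields an edge $p_qp_{q+1}$ with both ends in $C_{x_1}\cap\gamma\subseteq\beta$, so that $\{x_1,p_q,p_{q+1}\}$ is a triangle in the bipartite graph $\beta$. Second, your plan to handle $X_1$ ``symmetrically with respect to $\S_1$, using the central bag for $\S_1$ in place of $\gamma$'' is misdirected: the lemma concerns evenness in $\gamma$ for both parts, and in both cases the excursion of $P$ out of $\beta$ lands in $\bigcup_{x_1\in X_1}A_{x_1}$ by Lemma~\ref{lemma:properties}(iii), so the breaker $u$ is taken from $X_1$ in both cases and the two arguments are literally the same, not mirror images.
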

\begin{proof}
First, let $t_1, t_1' \in X_1$, and suppose that $P = p_1 \dd \hdots \dd p_k$ is an odd path from $t_1$ to $t_1'$ in $\gamma$ with $p_1 = t_1$ and $p_k = t_1'$. We assume $P$ is chosen with $|V(P) \cap (\gamma \setminus \beta)|$ minimum. Note that $\beta = \beta_{\S_1} \cap \gamma$, where $\beta_{\S_1}$ is the central bag for $\S_1$.

We claim that $C_{x_1} \cap \gamma \subseteq \beta$ for every $x_1 \in X_1$. By (iii) of Lemma \ref{lemma:properties}, we have $\gamma \setminus \beta \subseteq \bigcup_{S \in \S_1}A(S)$. Since $\S_1$ is loosely laminar, it follows that $C_{x_1} \cap A(S) = \emptyset$ for all $x_1 \in X_1$ and $S \in \S_1$. Therefore, $C_{x_1} \cap (\gamma \setminus \beta) = \emptyset$, and $C_{x_1} \cap \gamma \subseteq \beta$ for all $x_1 \in X_1$. This proves the claim.

Next, we show that the span of $x_1$ in $P$ is wide odd for some $x_1 \in X_1$. Since $\beta$ is bipartite, $P \cap (\gamma \setminus \beta) \neq \emptyset$, and therefore $P \cap A_{x_1} \neq \emptyset$ for some $x_1 \in X_1$. Note that it follows that the span of $x_1$ in $P$ is of length greater than one. Since $t_1, t_1' \in \beta$, it follows that $t_1, t_1' \in C_{x_1} \cup B_{x_1}$. Suppose $x_1$ has an even span $p_i \dd P \dd p_j$ in $P$. Then, $P' = p_1 \dd P \dd p_i \dd x_1 \dd p_j \dd P \dd p_k$ is an odd path from $t_1$ to $t_1'$, and $|V(P') \cap (\gamma \setminus \beta)| < |V(P) \cap (\gamma \setminus \beta)|$, a contradiction. It follows that the span of $x_1$ in $P$ is wide odd for some $x_1 \in X_1$.

By Lemma \ref{lemma:exists_a_paw}, $P$ contains an edge $p_qp_{q+1}$ such that $p_q, p_{q+1} \in C_{x_1} \cap \gamma$. But since $C_{x_1} \cap \gamma \subseteq \beta$ and $\beta$ is bipartite, it follows that $(C_{x_1} \setminus \{x_1\}) \cap \gamma$ is independent, a contradiction. Therefore, $X_1$ is an even set in $\gamma$, and by an analogous argument, $X_2$ is an even set in $\gamma$. 
\end{proof}

Now, we prove that $X_1$ and $X_2$ are even sets in $G$. 

\begin{lemma}\label{lemma:T1T2_even}
Let $(X_1, X_2)$ be the bipartition of $\beta$. Then, $X_1$ and $X_2$ are even sets in $G$. 
\end{lemma}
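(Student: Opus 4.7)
The plan is to lift the argument of Lemma~\ref{lemma:T_even_beta} from the central bag $\gamma$ to the whole of $G$, exploiting the fact that $V(G)\setminus \gamma \subseteq \bigcup_{x_2 \in X_2} A_{x_2}$ by Lemma~\ref{lemma:properties}(iii). I will prove that $X_1$ is an even set in $G$; the statement for $X_2$ follows by the symmetric argument in which $\S_1$ and $\S_2$ swap roles and the central bag of $\S_1$ replaces $\gamma$ (using the symmetric version of Lemma~\ref{lemma:T_even_beta}).

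Suppose, for contradiction, that some $t_1, t_1' \in X_1$ are joined by an odd induced path in $G$, and among all such paths pick $P = p_1 \dd \cdots \dd p_k$ minimizing $|V(P) \cap (G \setminus \gamma)|$. If $V(P) \subseteq \gamma$, then $P$ contradicts Lemma~\ref{lemma:T_even_beta}. Otherwise, choose $x_2 \in X_2$ with $V(P) \cap A_{x_2} \neq \emptyset$. The key preliminary claim is that $x_2 \notin V(P)$: if $x_2 = p_n$, then $1 < n < k$ (since $X_1 \cap X_2 = \emptyset$), the induced-ness of $P$ forces the only $V(P)$-neighbors of $x_2$ to be $p_{n-1}$ and $p_{n+1}$, and a short case analysis using $p_1, p_k \in \gamma$ and the anticompleteness of $A_{x_2}$ and $B_{x_2}$ propagates each half of $P$ into $B_{x_2}$, so $V(P) \cap A_{x_2} = \emptyset$, a contradiction.

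Knowing $x_2 \notin V(P)$, let $p_i \dd \cdots \dd p_j$ be the span of $x_2$ in $P$; the same anticompleteness reasoning gives $p_i, p_j \in C_{x_2} \setminus \{x_2\}$ and shows the span has length at least two. If the span is even, the shortcut $P' = p_1 \dd \cdots \dd p_i \dd x_2 \dd p_j \dd \cdots \dd p_k$ is an odd induced path from $t_1$ to $t_1'$ (no chord of the form $p_a p_b$ with $a \leq i$ and $b \geq j$ can exist since $P$ is induced and $j \geq i+2$, and $x_2$'s only neighbors in $V(P')$ are $p_i$ and $p_j$ by definition of span) whose count $|V(P') \cap (G \setminus \gamma)|$ is strictly smaller than that of $P$ (some interior span vertex in $A_{x_2}$ is deleted while $x_2 \in \gamma$ is added), contradicting the choice of $P$. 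If the span is wide odd, I apply Lemma~\ref{lemma:exists_a_paw} with $\S = \S_2$, $\beta_\S = \gamma$, $H = G$, $u = x_2$, using the even pair $(t_1, t_1')$ in $\gamma$ supplied by Lemma~\ref{lemma:T_even_beta}, to obtain a paw $\{x_2, p_q, p_r, p_s\}$ in $G$ with $r \in \{q+1, s-1\}$ and $1 \leq q < s - 2 \leq k - 2$, so $p_r \in V(P) \setminus \{t_1, t_1'\}$. Then Lemma~\ref{lemma:key_lemma_paw_shield}, applied with $x = t_1$, $y = t_1'$ (whose hypothesis holds because the distinct minimal vertices $t_1, t_1' \in X$ are incomparable in $\leq_A$), gives $p_r \leq_A t_1$ or $p_r \leq_A t_1'$ with $p_r$ equal to neither, contradicting the minimality of $t_1, t_1'$ in $\leq_A$ (a strict predecessor in $\leq_A$ can only arise from a star twin of smaller $\smallO$-index or from $t_i \in A_{p_r}$, both of which are ruled out by $t_1, t_1' \in X$).

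The main obstacle is the preliminary claim $x_2 \notin V(P)$, which is precisely what lets the shortcut and paw arguments port from the $\gamma$-setting to the $G$-setting; a careful but routine propagation along $P$, using anticompleteness of $A_{x_2}$ and $B_{x_2}$ together with $p_1, p_k \in \gamma$, makes it go through.
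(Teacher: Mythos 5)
Your proof is correct and follows essentially the same route as the paper's: apply Lemma~\ref{lemma:T_even_beta} to get an even pair in $\gamma$, pick an odd path $P$ in $G$ minimizing $|V(P)\cap(G\setminus\gamma)|$, use Lemma~\ref{lemma:properties}(iii) to find $x_2\in X_2$ with $V(P)\cap A_{x_2}\neq\emptyset$, rule out even span by a shortcut through $x_2$, rule out wide odd span via Lemma~\ref{lemma:exists_a_paw} and Lemma~\ref{lemma:key_lemma_paw_shield}, and conclude by the minimality of vertices of $X=\beta$ under $\leq_A$. The one place you add something is the preliminary claim $x_2\notin V(P)$: the paper invokes the span of $x_2$ in $P$ (which is only defined when $P\subseteq G\setminus\{x_2\}$) without comment, and your argument — that $x_2=p_n$ together with $p_1,p_k\in B_{x_2}\cup C_{x_2}$ and the anticompleteness of $A_{x_2}$ to $B_{x_2}$ forces $V(P)\cap A_{x_2}=\emptyset$ — is a correct and welcome explicit justification of that implicit step. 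Everything else (the parity count for $P'$, the verification that $p_r\notin\{t_1,t_1'\}$, the incomparability of $t_1,t_1'$ giving the hypothesis of Lemma~\ref{lemma:key_lemma_paw_shield}) checks out.
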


\begin{proof}
Let $t_1, t_1' \in X_1$, and let $P = p_1 \dd \hdots \dd p_k$ be an odd path in $G$ with $p_1 = t_1$ and $p_k = t_1'$. Assume $P$ is chosen with $|V(P) \cap (V(G) \setminus \gamma)|$ minimum.

We claim that the span of $x_2$ is wide odd for some $x_2 \in X_2$. By Lemma \ref{lemma:T_even_beta}, $t_1$ and $t_1'$ are an even pair in $\gamma$, so $P \cap (V(G) \setminus \gamma) \neq \emptyset$. Since $V(G) \setminus \gamma \subseteq \bigcup_{x_2 \in X_2} A_{x_2}$, it follows that $P \cap A_{x_2} \neq \emptyset$ for some $x_2 \in X_2$. Note that it follows that the span of $x_2$ in $P$ is of length greater than one. Since $t_1, t_1' \in \beta$ (as $\beta = X_1 \cup X_2$), it follows that $t_1, t_1' \in B_{x_2} \cup C_{x_2}$. Suppose $x_2$ has even span $p_i \dd P \dd p_j$ in $P$. Then, $P' = p_1 \dd P \dd p_i \dd x_2 \dd p_j \dd P \dd p_k$ is an odd path from $t_1$ to $t_1'$ and $|V(P') \cap (V(G) \setminus \gamma)| < |V(P) \cap (V(G) \setminus \gamma)|$, a contradiction. This proves the claim.

By Lemma \ref{lemma:exists_a_paw}, there exists a paw $\{u, p_q, p_r, p_s\}$, such that $u \in X_2$, $p_q, p_r, p_s \in P$, $q < s-2$, either $r = q+1$ or $r = s-1$. Since $t_1, t_1' \in \beta$, it follows that $t_1 \in B_{t_1'} \cup C_{t_1'}$ and $t_1' \in B_{t_1} \cup C_{t_1}$. Therefore, by Lemma \ref{lemma:key_lemma_paw_shield}, either $p_r \leq_A t_1$ or $p_r \leq_A t_1'$. But by Lemma \ref{lemma:beta_is_X}, $\beta$ is exactly the set of minimal vertices under the $\leq_A$ order, a contradiction. 
\end{proof}

\subsection{Even sets in $\R \setminus \beta$} 
\label{sec:even_sets_Rminusbeta}

In this section, we construct a $\delta^2$-iterated even set $(L_1, \hdots, L_{\delta^2})$ of $G \setminus \beta$ such that $L_1 \cup \hdots \cup L_{\delta^2} = \R \setminus \beta$. Similarly to the approach in Section \ref{sec:even_sets_in_beta}, we first show that $(L_1, \hdots, L_{\delta^2})$ is a $\delta^2$-iterated even set in $\gamma \setminus \beta$. We start with a lemma. 

\begin{lemma}
Let $(X_1, X_2)$ be the bipartition of $\beta$ and let $\R$ be the core bag of $G$. Let $x_1 \in X_1$ and let $D_{x_1} = A_{x_1} \cap \R$. Then, $|D_{x_1}| \leq \delta^2$. 
\end{lemma}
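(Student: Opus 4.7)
The plan is to peel apart the definition of $D_{x_1}$ and bound the two ``directions'' of the intersection separately, using only the fact that $G$ has maximum degree $\delta$ and that canonical star separations are centered at single vertices.

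First I would observe that since $x_1 \in X_1 \subseteq \beta$ and $\beta \subseteq B(S) \cup C(S)$ for every $S \in \mathcal{S}$ (in particular for $S_{x_1}$), we have $\beta \cap A_{x_1} = \emptyset$. Combined with the definition $\mathcal{R} = \beta \cup \bigcup_{x_2 \in X_2} C_{x_2}$, this lets me rewrite
\[
D_{x_1} \;=\; A_{x_1} \cap \mathcal{R} \;=\; \bigcup_{x_2 \in X_2}\bigl(A_{x_1} \cap C_{x_2}\bigr).
\]
So it suffices to bound, on the one hand, the number of $x_2 \in X_2$ for which $A_{x_1} \cap C_{x_2}$ is nonempty, and on the other hand, the size of each nonempty intersection.

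For the first bound, I would fix $x_2 \in X_2$ and a vertex $v \in A_{x_1} \cap C_{x_2}$. Since $C_{x_2} \subseteq N[x_2]$, either $v = x_2$ or $v \in N(x_2)$. The case $v = x_2$ is ruled out because $x_2 \in \beta \subseteq B_{x_1} \cup C_{x_1}$, so $x_2 \notin A_{x_1}$. Hence $x_2$ has a neighbor (namely $v$) in $A_{x_1}$, i.e.\ $x_2 \in N(A_{x_1})$; but $A_{x_1}$ is anticomplete to $B_{x_1}$, so $N(A_{x_1}) \subseteq C_{x_1} \subseteq N[x_1]$, and since $x_2 \ne x_1$ we conclude $x_2 \in N(x_1)$. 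Because $G$ has maximum degree $\delta$, at most $\delta$ choices of $x_2 \in X_2$ contribute a nonempty intersection.

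For the second bound, the same argument shows that whenever $A_{x_1} \cap C_{x_2}$ is nonempty, every vertex in it lies in $N(x_2)$, so $|A_{x_1} \cap C_{x_2}| \le |N(x_2)| \le \delta$. Multiplying the two bounds yields $|D_{x_1}| \le \delta \cdot \delta = \delta^2$, as required. The statement is really a book-keeping fact extracted from Section~4, and I expect no genuine obstacle: the only subtlety is remembering to rule out $v = x_2$ using $\beta \cap A_{x_1} = \emptyset$, which is what prevents one from naively ``double counting'' the center as an extra vertex.
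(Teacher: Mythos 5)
Your proof is correct and follows essentially the same route as the paper's: show that every $v \in D_{x_1}$ lies in $C_{x_2}$ for some $x_2 \in X_2$ with $x_2 \in N(x_1)$ and $v \in N(x_2)$, then use the degree bound. The only cosmetic difference is that you make the $v \ne x_2$ step fully explicit and organize the count as ``at most $\delta$ choices of $x_2$, each contributing at most $\delta$ vertices,'' whereas the paper compresses this into ``$D_{x_1} \subseteq N^2[x_1]$'' (implicitly noting $x_1 \notin A_{x_1}$); both give $|D_{x_1}| \le \delta^2$.
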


\begin{proof}
Let $v \in D_{x_1}$.  Since $v \in \R \setminus \beta$, it follows that $v \in C_{x_2}$ for some $x_2 \in X_2$. Since $x_2$ has a neighbor in $A_{x_1}$ and $x_2 \in C_{x_1} \cup B_{x_1}$, we have $x_2 \in C_{x_1}$, and so $x_1$ is adjacent to $x_2$. Therefore, $D_{x_1} \subseteq N^2[x_1]$. Since $G$ has maximum degree $\delta$, it follows that $|D_{x_1}| \leq \delta^2$.
\end{proof}

In Lemma \ref{lemma:T1T2_even}, the key contradiction comes from the fact that when $P$ is an odd path from $x$ to $y$, there exists a vertex $p_i \in P$ such that $p_i \in A_x$. To reach that contradiction in this case, we make use of the relation $\leq_A$.  Recall the relation $\leq_A$ defined on $V(G)$,
\begin{equation*}
\hspace{2.5cm}
x \leq_A y \ \ \ \text{ if} \ \ \  
\begin{cases} x = y, \text{ or} \\ 
\text{$x$ and $y$ are star twins and $\smallO(x) < \smallO(y)$, or}\\ 
\text{$x$ and $y$ are not star twins and } y \in A_x,\\
\end{cases}
\end{equation*}
where $\smallO:V(G) \to \{1, \hdots, |V(G)|\}$ is a fixed ordering of $V(G)$. The relation $\leq_A$ is integral to constructing even sets in $\R \setminus \beta$. Let $X_1 = \{x_1, \hdots, x_m\}$, and let $D_i = A_{x_i} \cap \R$ for $1 \leq i \leq m$. By Lemma \ref{lemma:partial-order}, $\leq_A$ is a partial order. Let $\ell: V(G) \to \{1, \hdots, |V(G)|\}$ be an ordering of $V(G)$ that corresponds to a linear extension of $\leq_A$ (which exists due to the order-extension principle \cite{order-extension}). So $\ell$ is a total order and for every $u, v \in V(G)$ with $u \neq v$, if $u \leq_A v$ then $\ell(u) < \ell(v)$. For each $D_i$, let $\ell_{x_i}: V(D_i) \to \{1, \hdots, |D_i|\}$ be an ordering of $V(D_i)$ such that for all $u, v \in D_i$, $\ell_{x_i}(u) < \ell_{x_i}(v)$ if and only if $\ell(u) < \ell(v)$.  
Now, we define $\delta^2$ sets $L_1, \hdots, L_{\delta^2}$, as follows: 
$$ L_i' = \bigcup_{x_j \in X_1} \ell_{x_j}^{-1}(i),$$ and 
$$L_i = L_i' \setminus (L_{i+1} \cup \hdots \cup L_{\delta^2}).$$ 
It follows from the definition of the sets $L_1, \hdots, L_{\delta^2}$ that for every $v \in \R \setminus \beta$, $v \in L_i$ if and only if $i = \max_{x_i \in X_1: v \in A_{x_i}}(\ell_{x_i}(v))$. Note also that $L_1, \hdots, L_{\delta^2}$ are disjoint and $\R \setminus \beta = L_1 \cup \hdots \cup L_{\delta^2}$. We call $(L_1, \hdots, L_{\delta^2})$ the {\em even set representation of $\R \setminus \beta$}. The {\em prefix of $L_i$}, denoted $\Pre(L_i)$, is $\Pre(L_i) = L_1 \cup \hdots \cup L_{i-1}$ for $i > 1$, and $\Pre(L_1) = \emptyset$. By the construction of $L_1, \hdots, L_{\delta^2}$, we have that for all $v \in V(G) \setminus \Pre(L_i)$ and all $x \in L_i$, $v \in A_{x}$ if and only if $v$ and $x$ are star twins. This  reproduces the conditions for the contradiction from Lemma \ref{lemma:T1T2_even}, and will play a key role in getting a contradiction in this case. 

In the remainder of this section, we prove that $(L_1, \hdots, L_{\delta^2})$ is a $\delta^2$-iterated even set of $G$. We need the following lemmas.

\begin{lemma}
Let $(L_1, \hdots, L_{\delta^2})$ be the even set representation of $\R \setminus \beta$. Let $u, v \in \R \setminus \beta$ be distinct vertices such that $u \leq_A v$, and let $i$ be such that $u \in L_i$. Then, $v \not \in L_i \cup \Pre(L_i)$. 
\label{lemma:xy_not_Li}
\end{lemma}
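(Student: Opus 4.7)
The plan is to exploit the construction of $(L_1,\ldots,L_{\delta^2})$: since $v\in L_i\cup\Pre(L_i)$ iff $\max\{\ell_{x_j}(v):x_j\in X_1,\ v\in A_{x_j}\cap\R\}\le i$, I only need to produce a single $x\in X_1$ with $v\in A_x\cap\R$ and $\ell_x(v)>i$. The natural candidate is the witness for $u\in L_i$: fix $x_k\in X_1$ with $u\in A_{x_k}\cap\R$ and $\ell_{x_k}(u)=i$ (which exists by the definition of $L_i$).

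The first step is to show $x_k\leq_A u$. Since $u\in A_{x_k}$ and $u\neq x_k$, Lemma~\ref{lemma:star_shields} gives either $u$ and $x_k$ are star twins, or $S_{x_k}$ is a shield for $S_u$. If they are not star twins, then $x_k\leq_A u$ holds by the definition of $\leq_A$. If they are star twins, I use that $x_k\in X$, i.e., $x_k$ is $\leq_A$-minimal: in particular, $u\not\leq_A x_k$, which together with the star-twin case of the definition forces $\smallO(x_k)<\smallO(u)$, so $x_k\leq_A u$.

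Combining $x_k\leq_A u\leq_A v$ with transitivity (Lemma~\ref{lemma:partial-order}) yields $x_k\leq_A v$. Because $x_k\in\beta$ and $v\in\R\setminus\beta$ we have $x_k\neq v$, so whether $x_k,v$ are star twins or not, the definition of $\leq_A$ (together with the elementary fact that distinct star twins lie in each other's $A$-sets) implies $v\in A_{x_k}$. Since $v\in\R$, this places $v$ in $A_{x_k}\cap\R=D_{x_k}$, so $\ell_{x_k}(v)$ is defined.

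Finally, since $\ell$ is a linear extension of $\leq_A$ and $u\leq_A v$ with $u\neq v$, we have $\ell(u)<\ell(v)$, so by compatibility of $\ell_{x_k}$ with $\ell$, $\ell_{x_k}(v)>\ell_{x_k}(u)=i$. Hence the maximum that determines $v$'s index exceeds $i$, so $v\in L_j$ for some $j>i$ and therefore $v\notin L_i\cup\Pre(L_i)$. The only subtle point is the star-twin alternative in establishing $x_k\leq_A u$; this is the main obstacle, and it is dissolved by the fact that $X$ was chosen as the set of $\leq_A$-minimal vertices.
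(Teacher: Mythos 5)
Your proof is correct and follows essentially the same strategy as the paper's: both reduce the claim to exhibiting, for the $x_k \in X_1$ that attains $\ell_{x_k}(u) = i$, that $v \in A_{x_k} \cap \R$ and $\ell_{x_k}(v) > i$. The only cosmetic difference is the route to $v \in A_{x_k}$: you derive $x_k \leq_A u$ from minimality of $X$ and then invoke transitivity of $\leq_A$, whereas the paper combines $v \in A_u$ (from $u \leq_A v$) with the inclusion $A_u \setminus \{x_k\} \subseteq A_{x_k}$ obtained directly from Lemma~\ref{lemma:star_shields}.
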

\begin{proof}
 Since $u \leq_A v$ and $u \neq v$, it follows that $v \in A_u$. Suppose $x_i \in X_1$ such that $u \in A_{x_i}$. By Lemma \ref{lemma:star_shields}, $A_u \setminus \{x_i\} \subseteq A_{x_i}$, so $v \in A_{x_i}$. Then, $\ell_{x_i}(v) > \ell_{x_i}(u)$ for all $x_i \in X_1$ such that $u \in A_{x_i}$, so $\max_{x_i \in X_1: v \in A_{x_i}}(\ell_{x_i}(v)) > \max_{x_i \in X_1: u \in A_{x_i}}(\ell_{x_i}(u))$.  It follows that $v \not \in L_i \cup \Pre(L_i)$.
\end{proof}

\begin{lemma}
Let $(L_1, \hdots, L_{\delta^2})$ be the even set representation of $\R \setminus \beta$ and let $D$ be a connected component of $\gamma \setminus \beta$. Then, for all $1 \leq i \leq \delta^2$, $|L_i \cap D| \leq 1$. 
\label{lemma:Li_cap_D}
\end{lemma}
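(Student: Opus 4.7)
The plan is to argue by contradiction. I would suppose there exist distinct $u, v \in L_i \cap D$, and using the fixed linear extension $\ell$ of $\leq_A$, assume without loss of generality that $\ell(u) < \ell(v)$. Because $u \in L_i$, the characterization of $L_i$ yields some $x_j \in X_1$ with $u \in A_{x_j}$ and $\ell_{x_j}(u) = i$. The heart of the proof will be to show that $D$ is entirely contained in $A_{x_j}$. Once this is established, $v \in A_{x_j}$, and since $v \in L_i \subseteq \R \setminus \beta$ we in fact have $v \in D_{x_j} = A_{x_j} \cap \R$. Because $\ell_{x_j}$ is constructed to be induced by $\ell$, this gives $\ell_{x_j}(v) > \ell_{x_j}(u) = i$, which forces $\max\{\ell_{x_k}(v) : v \in A_{x_k}\} > i$, contradicting $v \in L_i$.

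The containment $D \subseteq A_{x_j}$ is the main step. Suppose for contradiction that $D \setminus A_{x_j} \neq \emptyset$. Since $D$ is connected and $u \in D \cap A_{x_j}$, there is an edge $ww'$ of $G[D]$ with $w \in A_{x_j}$ and $w' \in D \setminus A_{x_j}$. As $x_j \in X_1 \subseteq \beta$ while $w' \in D \subseteq \gamma \setminus \beta$, we have $w' \neq x_j$, so $w' \in B_{x_j} \cup C_{x_j}$. Next I would observe that $A_{x_j}$ is anticomplete to $B_{x_j}$: any vertex of $A_{x_j} \cap N(x_j)$ has no neighbor in $B_{x_j}$ by the definition of $C_{x_j}$, while any vertex of $A_{x_j} \setminus N[x_j]$ lies in a component of $G \setminus N[x_j]$ distinct from $B_{x_j}$. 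Hence $w' \in C_{x_j}$. On the other hand, by Lemma \ref{lemma:properties}(iii), $\gamma \setminus \beta \subseteq \bigcup_{x \in X_1} A_x$, so $w' \in A_{x_k}$ for some $x_k \in X_1$. If $x_k = x_j$ then $w' \in A_{x_j}$, contradicting $w' \in D \setminus A_{x_j}$; and if $x_k \neq x_j$, then by the loose laminarity of $\S_1$ guaranteed by Lemma \ref{lemma:beta_bipartite}(ii) we have $A_{x_k} \cap C_{x_j} = \emptyset$, contradicting $w' \in A_{x_k} \cap C_{x_j}$.

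The main obstacle I expect to navigate is picking the right framing. The most tempting line of attack is to try to show that any two distinct vertices of $D$ are comparable under $\leq_A$ and then invoke Lemma \ref{lemma:xy_not_Li}, but comparability between arbitrary pairs of vertices in $D$ is more than one can reasonably extract directly. The right move is instead to single out a specific $x_j$ witnessing $u \in L_i$ and show that $D$ cannot escape $A_{x_j}$. The technical content of that step — that no vertex of $\gamma \setminus \beta$ lies in $C_{x_j}$ — is exactly where the loose laminarity of $\S_1$ combines with the decomposition $\gamma \setminus \beta \subseteq \bigcup_{x \in X_1} A_x$; once this is noticed, the remainder of the argument is a routine comparison of $\ell_{x_j}$-values.
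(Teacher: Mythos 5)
Your proof is correct and follows essentially the same approach as the paper: both arguments reduce to showing that a component $D$ of $\gamma\setminus\beta$ lies entirely inside $A_{x_j}$ whenever it meets $A_{x_j}$, and then derive a contradiction by comparing $\ell_{x_j}$-values of the two vertices. The only difference is presentational: the paper cites Lemma~\ref{lemma:structure_of_R} for the containment step, whereas you derive it explicitly from $C_{x_j}\cap(\gamma\setminus\beta)=\emptyset$ via loose laminarity (the same argument used in Lemma~\ref{lemma:T_even_beta}), which is if anything a more careful justification.
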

\begin{proof}
Suppose $x, y \in L_i \cap D$. Since $D$ is a connected component of $\gamma \setminus \beta$, it follows by Lemma \ref{lemma:structure_of_R} that for all $x_1 \in X_1$, $x \in A_{x_1}$ if and only if $y \in A_{x_1}$. Assume by symmetry that $\ell(x) < \ell(y)$. Then, for all $x_j \in X_1$ such that $x, y \in A_{x_j}$, we have $\ell_{x_j}(x) < \ell_{x_j}(y)$. Therefore, $\max_{x_j \in X_1: x \in A_{x_j}}(\ell_{x_j}(x)) < \max_{x_j \in X_1: y \in A_{x_j}}(\ell_{x_j}(y))$. But $\max_{x_j \in X_1: x \in A_{x_j}}(\ell_{x_j}(x)) = \max_{x_j \in X_1: y \in A_{x_j}}(\ell_{x_j}(y)) = i$, a contradiction. 
\end{proof}

\begin{lemma}
Suppose $x \in X$ and $u \in V(G)$ such that $x \in A_u$. Then, $u$ and $x$ are star twins. 
\label{lemma:L1}
\end{lemma}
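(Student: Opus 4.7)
The plan is a short argument by contradiction using only the definition of the partial order $\leq_A$ and the minimality property defining the star covering $X$.

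First I would recall two basic facts. Since $x \in A_u$ and the canonical star separation $S_u = (A_u, C_u, B_u)$ satisfies $u \in C_u$, the sets $A_u$ and $C_u$ are disjoint, so $x \neq u$. Second, $X$ was defined in Section~\ref{sec:star_separations} as the set of vertices that are minimal with respect to the partial order $\leq_A$ (whose partial-order status is established in Lemma~\ref{lemma:partial-order}), so $x$ has no strict predecessor under $\leq_A$.

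Now suppose for contradiction that $u$ and $x$ are not star twins. By the third clause of the definition of $\leq_A$, the conditions ``$u$ and $x$ are not star twins'' and ``$x \in A_u$'' together give $u \leq_A x$. Combined with $u \neq x$ (shown above), this yields $u <_A x$, a strict relation, contradicting the minimality of $x \in X$. Therefore $u$ and $x$ must be star twins, which is the desired conclusion.

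The argument is essentially an unpacking of definitions, so I do not expect any obstacle — the only thing to be careful about is the case $u = x$, which is excluded directly from $x \in A_u$ together with the fact that the centre of a canonical star separation lies in $C_u$, not in $A_u$.
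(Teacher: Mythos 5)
Your proposal is correct and takes essentially the same approach as the paper: both arguments observe that if $x$ and $u$ were not star twins, then $x \in A_u$ would give $u \leq_A x$ with $u \neq x$, contradicting the minimality of $x$ in $X$ under $\leq_A$. The paper's one-line proof simply compresses this reasoning (and additionally notes $\smallO(x) < \smallO(u)$, which follows but is not part of the stated conclusion).
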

\begin{proof}
Since $x \in X$, it follows that $x$ is minimal with respect to $\leq_A$. Therefore, $x$ and $u$ are star twins and $\smallO(x) < \smallO(u)$. 
\end{proof}

\begin{lemma}
\label{lemma:L2}
Let $(X_1, X_2)$ be the bipartition of $\beta$. Suppose $x, y \in X_2$ and $u \in V(G)$ such that $u \in C_x$. Then, $y \not \in A_u$. 
\end{lemma}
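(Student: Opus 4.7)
\medskip

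My plan is a proof by contradiction: assume $y \in A_u$ and derive a contradiction, splitting on whether $u = x$ or not.

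First I would dispose of the easy case $u = x$. Then $y \in A_x$. Since $x \in X_2 \subseteq X$, we have $S_x \in \S$, and by definition $\beta = \bigcap_{S \in \S}(B(S) \cup C(S))$, so $\beta \cap A_x = \emptyset$. But $y \in X_2 \subseteq \beta$, contradicting $y \in A_x$.

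Next I would handle the main case $u \neq x$, so that $u \in C_x \setminus \{x\}$, meaning $u$ is adjacent to $x$ and has a neighbor in $B_x$. Because $y \in X = \beta$ is minimal with respect to $\leq_A$ and $y \in A_u$ with $y \neq u$, Lemma~\ref{lemma:star_shields} forces $y$ and $u$ to be star twins; otherwise $S_u$ would be a (strict) shield for $S_y$, giving $u \leq_A y$ and hence (by minimality of $y$) $u = y$, contradicting $y \in A_u$. From the star twin relation I get $B_y = B_u$ and $C_y \setminus \{y\} = C_u \setminus \{u\}$, and symmetrically $u \in A_y$.

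With these facts in hand, the contradiction falls out. First, $x \neq y$: if $x = y$, then $u \in C_x = C_y$, but also $u \in A_y$, and $C_y \cap A_y = \emptyset$. Second, since $x, y \in X_2$ and $X_2$ is independent in the bipartite graph $\beta$ (Lemma~\ref{lemma:beta_bipartite}), $x$ and $y$ are non-adjacent in $G$. Now $x \in \beta \subseteq B_y \cup C_y$ because $S_y \in \S$. The subcase $x \in B_y$ is impossible: $B_y = B_u$ and $B_u \cap N[u] = \emptyset$ by definition of $B_u$, contradicting $u \sim x$. So $x \in C_y$, and since $x \neq y$, actually $x \in C_y \setminus \{y\} \subseteq N(y)$, so $x \sim y$, contradicting the bipartite structure.

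The main conceptual hurdle is recognising that the star-twin identification forces $x$ into $C_y$ via the identification $C_u \setminus \{u\} = C_y \setminus \{y\}$ and then exploiting the bipartition of $\beta$ to land in a direct contradiction; the rest is bookkeeping with the definitions of the canonical star separation.
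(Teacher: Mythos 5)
Your proof is correct, and after the shared star-twin step it takes a genuinely different route from the paper's. Both proofs establish first that $y$ and $u$ must be star twins: the paper invokes Lemma~\ref{lemma:L1} directly, while you re-derive it from Lemma~\ref{lemma:star_shields} together with the minimality of $y$ under $\leq_A$ (your mention of the shield is a small detour --- once you know $y\in A_u$ and they are not star twins, the definition of $\leq_A$ immediately gives $u\leq_A y$, no shield argument needed). After that, the paper goes through $u\in A_y$ and $u\in C_x$ and quotes Lemma~\ref{lemma:loosely_noncrossing} (applied to the non-adjacent, $\leq_A$-incomparable pair $x,y\in X_2$) to land in the empty intersection $A_y\cap C_x$. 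You instead unpack the star-twin identities $B_y=B_u$ and $C_y\setminus\{y\}=C_u\setminus\{u\}$ to locate $x$: since $x\in\beta\subseteq B_y\cup C_y$ and $x\sim u$ rules out $x\in B_y=B_u$, you get $x\in C_y\setminus\{y\}$ and hence the edge $xy$, contradicting the independence of $X_2$. So the paper leans on the loosely non-crossing machinery, while you lean on the explicit star-twin structure plus bipartiteness of $\beta$; your version is a bit longer but more self-contained, and it has the small bonus of handling the degenerate case $u=x$ cleanly and explicitly, which the paper leaves implicit.
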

\begin{proof}
Suppose $y \in A_u$. By Lemma \ref{lemma:L1}, it follows that $y$ and $u$ are star twins, so $u \in A_y$. But now $u \in A_y \cap C_x$, contradicting Lemma \ref{lemma:loosely_noncrossing}. 
\end{proof}

Now, we prove the main result of this section: $(L_1, \hdots, L_{\delta^2})$ is a $\delta^2$-iterated even set in $G \setminus \beta$. 

\begin{lemma}
Let $(L_1, \hdots, L_{\delta^2})$ be the even set representation of $\R \setminus \beta$. Let $1 \leq i \leq \delta^2$, let $x, y \in L_i$, and let $P$ be a path from $x$ to $y$ with $P^* \subseteq G \setminus (\beta \cup \Pre(L_i))$. Then, $P$ is even. 
\label{lemma:major_even_paths}
\end{lemma}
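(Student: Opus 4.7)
The plan is to derive a contradiction from the assumption that $P=p_1\dd\hdots\dd p_k$ is an odd counterexample, chosen to minimize $|P|$. Applying Lemma~\ref{lemma:xy_not_Li} in both directions to the distinct vertices $x,y\in L_i$ shows that $x$ and $y$ are $\leq_A$-incomparable and not star twins, so $y\in B_x\cup C_x$ and $x\in B_y\cup C_y$, fulfilling the hypothesis of Lemma~\ref{lemma:key_lemma_paw_shield}. If $P\subseteq\gamma$, then $P^{\ast}\cap\beta=\emptyset$ combined with $x,y\in L_i\subseteq\R\setminus\beta\subseteq\gamma\setminus\beta$ gives $P\subseteq\gamma\setminus\beta$, placing $x$ and $y$ in a common connected component of $\gamma\setminus\beta$ and contradicting Lemma~\ref{lemma:Li_cap_D}. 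So we may assume $P\not\subseteq\gamma$, in which case $|P|\geq 3$ and some $x_2\in X_2$ satisfies $P\cap A_{x_2}\neq\emptyset$.

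The heart of the proof is to produce a vertex $u\in X_2$ with a wide odd span in $P$. Once found, Lemma~\ref{lemma:exists_a_paw}, applied with $\S=\S_2$, $\beta_\S=\gamma$, and $H=G\setminus\beta$ (so that the even-pair hypothesis on $x,y$ in $\gamma\setminus\beta$ is vacuous by Lemma~\ref{lemma:Li_cap_D}), yields a paw $\{u,p_q,p_r,p_s\}$ with $p_q,p_r,p_s\in C_u\cap P$; Lemma~\ref{lemma:key_lemma_paw_shield} then forces $p_r\leq_A x$ or $p_r\leq_A y$. Because $u\in X_2$, we have $p_r\in C_u\subseteq\R$, and $p_r\in P^\ast\subseteq G\setminus\beta$ gives $p_r\in\R\setminus\beta$, so $p_r\in L_j$ for some $j\geq i$ (since $p_r\notin\Pre(L_i)$); applying Lemma~\ref{lemma:xy_not_Li} to $p_r\leq_A x$ then forces $j<i$, a contradiction. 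Such a $u$ is immediate when some $x_2\in X_2$ satisfies $\{x,y\}\subseteq C_{x_2}$: the span of $x_2$ in $P$ is then the entire path $P$, of odd length $|P|\geq 3$, and hence wide odd.

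The hard sub-case is when no $u\in X_2$ has both $x$ and $y$ in $C_u$. Pick $x_2(x),x_2(y)\in X_2$ with $x\in C_{x_2(x)}$ and $y\in C_{x_2(y)}$; these are distinct and, by independence of $X_2$ in $G$, non-adjacent, and moreover $x_2(x)\not\sim y$ and $x_2(y)\not\sim x$ (else the sub-case hypothesis fails), while Lemma~\ref{lemma:L2} refines this to $x_2(y)\in B_x$ and $x_2(x)\in B_y$. The walk $\widetilde P=x_2(x)\dd x\dd P\dd y\dd x_2(y)$ thus has odd length $|P|+2$; if $\widetilde P$ is chordless, it is an induced odd path between two $X_2$ vertices, contradicting Lemma~\ref{lemma:T1T2_even}. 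Otherwise its chords are confined to edges from $x_2(x)$ or $x_2(y)$ to interior vertices of $P$, and by the absence of wide odd spans the spans of $x_2(x),x_2(y)$ in $P$ are even or of length at most one; using the maximal $j^{\ast}$ with $x_2(x)\sim p_{j^{\ast}}$ and the minimal $j^{\ast\ast}$ with $x_2(y)\sim p_{j^{\ast\ast}}$, one shortcuts $\widetilde P$ into an induced path between $x_2(x)$ and $x_2(y)$ whose length can be shown to be odd by a parity case analysis leveraging the minimality of $|P|$ (via Lemma~\ref{lemma:xy_not_Li} applied to the $L$-levels of the shortcut endpoints). This again contradicts Lemma~\ref{lemma:T1T2_even}. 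The main obstacle is precisely this last parity bookkeeping in the hard sub-case, which requires a delicate case analysis simultaneously ensuring the shortcut path is induced and odd.
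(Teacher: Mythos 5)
Your overall skeleton is right and several pieces match the paper: you correctly reduce to $P^*\cap L_i=\emptyset$ via minimality, you correctly use Lemma~\ref{lemma:xy_not_Li} and Lemma~\ref{lemma:Li_cap_D} to show $x,y$ lie in distinct components of $\gamma\setminus\beta$ (hence are an even pair there) and to derive $y\in B_x\cup C_x$, $x\in B_y\cup C_y$, and your endgame after producing a vertex $u\in X_2$ with wide odd span in $P$ (Lemma~\ref{lemma:exists_a_paw} $\Rightarrow$ Lemma~\ref{lemma:key_lemma_paw_shield} $\Rightarrow$ contradiction with Lemma~\ref{lemma:xy_not_Li} via the level $j>i$ of $p_r$) is exactly what the paper does after its claim (7.11.1). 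Your ``easy sub-case'' is also sound: if some $x_2\in X_2$ has both $x,y\in C_{x_2}$, the span of $x_2$ in $P$ is all of $P$, which is odd and of length at least two, hence wide odd.

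However, the ``hard sub-case'' has a genuine gap, and you flag it yourself. There are two concrete problems. First, the claim $x_2(x)\not\sim y$ does not follow from the sub-case hypothesis alone: $y\sim x_2(x)$ only puts $y$ in $A_{x_2(x)}\cup C_{x_2(x)}$; the sub-case only rules out $y\in C_{x_2(x)}$, not $y\in A_{x_2(x)}$, so more work is needed even to make $\widetilde P$ start out chordless. Second, and more fundamentally, the final step (shortcutting $\widetilde P$ at the outermost chords and arguing by parity that the resulting induced path between $x_2(x)$ and $x_2(y)$ is still odd) is exactly where the difficulty lies, and you do not resolve it. The minimality of $|P|$ does not obviously transfer parity information across the shortcut, because the shortcut endpoints are in $X_2$, not in $L_i$, so Lemma~\ref{lemma:xy_not_Li} does not directly control their levels.

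The paper's proof of claim (7.11.1) takes a different route through the hard case. Instead of trying to join $x_2(x)$ and $x_2(y)$ by a single odd induced path, it fixes one vertex $u_x\in X_2$ with $x\in C_{u_x}$, looks at the neighbor $z$ of $u_x$ on $x\dd P\dd p_i$ closest to $p_i$, and uses Lemma~\ref{lemma:T1T2_even} (applied to $u_x,x_2\in X_2$ and the path $u_x\dd z\dd P\dd p_i\dd x_2$) only to control the parity of $z\dd P\dd p_i$; this pins down that $x\dd P\dd z$ is odd, and after ruling out a wide odd span for $u_x$, forces $z\sim x$. At that point the paper abandons path-extension arguments altogether and instead constructs an explicit paw $\{x,u_x,z,t\}$ (with $t=x_2$ or $t=p_3$), then applies the paw-friendly property of $G$ together with the relations recorded in items (i)--(v) there and the structure of the level sets $L_i$ to reach a contradiction. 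This paw-based mechanism is the missing ingredient: it replaces the delicate ``shortcut-and-keep-parity'' bookkeeping you were attempting with a single localized application of Theorem~\ref{thm:paws_are_forcers}, and is what actually closes the hard case.
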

\begin{proof}
We may assume that $P^* \cap L_i = \emptyset$. 
Fix $1 \leq i \leq \delta^2$. Let $P = p_1 \dd \hdots \dd p_k$, with $p_1 = x$ and $p_k = y$, and suppose $P$ is odd. By Lemma \ref{lemma:Li_cap_D}, $|L_i \cap D'| \leq 1$ for every connected component $D'$ of $\gamma \setminus \beta$, so  $x$ and $y$ are in different connected components of $\gamma \setminus \beta$. It follows that $x$ and $y$ are an even pair in $\gamma \setminus \beta$, and $P \cap (G \setminus \gamma) \neq \emptyset$.

\vspace{-0.3cm}

\begin{equation}
\longbox{{\it There exist $u \in X_2$ and $p_q, p_r, p_s \in P \cap C_u$ with $q < s - 2$ and either $r = q+1$ or $r = s-1$, such that $\{u, p_q, p_r, p_s\}$ is a paw of $G$ with edge set $\{up_q, up_r, up_s, p_qp_r\}$ or $\{up_q, up_r, up_s, p_rp_s\}$.}}
\label{eq:there_is_edge}\tag{$7.11.1$}
\end{equation}
{\em Proof of \eqref{eq:there_is_edge}}:
Since $P \cap (G \setminus \gamma) \neq \emptyset$ and $G \setminus \gamma = \bigcup_{x_2 \in X_2} A_{x_2}$, we have $P \cap A_{x_2} \neq \emptyset$ for some $x_2 \in X_2$ with at least two non-adjacent neighbors in $P$ (in particular, the span of $x_2$ in $P$ is of length greater than one). By Lemma \ref{lemma:exists_a_paw}, if there exists $x_2 \in X_2$ such that the span of $x_2$ in $P$ is wide odd, then the claim holds. Therefore, we may assume that for every such $x_2$, the span of $x_2$ is even. Let $x_2 \in X_2$ be such that $P \cap A_{x_2} \neq \emptyset$, and let $p_i$ and $p_j$ be the neighbors of $x_2$ in $P$ closest to $x$ and $y$, respectively. As the span of $x_2$ is even, it follows that $p_i \dd P \dd p_j$ is an even subpath of $P$. 

 Since $p_i \dd P \dd p_j$ is an even subpath of $P$, and $P$ is odd, it follows that either $x \dd P \dd p_i$ is odd, or $y \dd P \dd p_j$ is odd. Up to symmetry, assume $x \dd P \dd p_i$ is odd, so in particular, $p_i \neq x$. Let $u_x \in X_2$ be such that $x \in C_{u_x}$ (which exists by definition of $\R$ and since $x \in \R \setminus \beta$). If $x$ is the unique neighbor of $u_x$ in $x \dd P \dd p_i$, then $u_x \dd x \dd P \dd p_i \dd x_2$ is an odd path, contradictiong Lemma \ref{lemma:T1T2_even}, so $u_x$ has a neighbor in $\{p_2, \hdots, p_i\}$. Let $z$ be the neighbor of $u_x$ closest to $p_i$. Then, $u_x \dd z \dd P \dd p_i \dd x_2$ is an even path from $u_x$ to $x_2$. It follows that $x \dd P \dd z$ is odd. If $u_x$ has wide odd span in $x \dd P \dd p_i \dd x_2$, then, by Lemma \ref{lemma:exists_a_paw}, there exists $p_q, p_{q+1}, p_r \in x \dd P \dd z$ with $r < q-1$ or $r > q+2$, such that $p_q, p_{q+1}, p_r \in C_{u_x}$. Therefore, we may assume $z$ is adjacent to $x$.

We note the following relations: 
\begin{enumerate}[(i)]
\itemsep -0.2em
    \item $x \in C_{u_x}$. (By choice of $u_x$).
    \item $x_2 \in B_{u_x}$, $u_x \in B_{x_2}$. (Since $x_2, u_x \in X_2$).
    \item $z, x \in C_{u_x}$. (Since $x \dd P \dd p_i \dd x_2$ is a path with $x, x_2 \in B_{u_x} \cup C_{u_x}$, and $x \dd z$ is the span of $u_x$ in $x \dd P \dd p_i \dd x_2$, it follows that $x, z \in C_{u_x}$).
    \item $x_2 \not \in A_{z}$, $x_2 \not \in A_x$. (By Lemma \ref{lemma:L2}, since $x, z \in C_{u_x}$, it follows that $x_2 \not \in A_x$ and $x_2 \not \in A_z$).
    \item $u_x \not \in A_x$, $u_x \not \in A_z$. (Suppose $u_x \in A_x$. By Lemma \ref{lemma:L1}, it follows that $u_x$ and $x$ are star twins. But $x \in C_{u_x}$, a contradiction. So $u_x \not \in A_x$, and similarly, $u_x \not \in A_z$.). 
\end{enumerate}

Now, $xu_xzt$ is a paw of $G$, where $t = x_2$ if $z = p_i$, and $t = p_3$ otherwise. Suppose $t \in A_x$. By (iv), $x_2 \not \in A_x$, so it follows that $t = p_3$ and $p_3 \in A_x$. But then $p_3 \dd P \dd p_i \dd x_2$ is a path from $p_3$ to $x_2$ anticomplete to $x$, so $x_2 \in A_x$, a contradiction. Therefore, $t \not \in A_x$. Similarly, $t \not \in A_z$ and $t \not \in A_{u_x}$. 

By Theorem \ref{thm:paws_are_forcers}, either $x$ breaks $\{u_x, t\}$, or $u_x$ breaks $\{x, t\}$, or $z$ breaks $\{x, u_x, t\}$.  Suppose $x$ breaks $\{u_x, t\}$, so $A_x \cap \{u_x, t\} \neq \emptyset$. But $t \not \in A_x$, and by (v), $u_x \not \in A_x$, a contradiction. Now, suppose $u_x$ breaks $\{x, t\}$, so $A_{u_x} \cap \{x, t\} \neq \emptyset$. But $t \not \in A_{u_x}$, and by (i), $x \not \in A_{u_x}$, a contradiction. So $z$ breaks $\{x, u_x, t\}$. Suppose $z$ breaks $\{u_x, t\}$, so $A_z \cap \{u_x, t\} \neq \emptyset$. But $t \not \in A_z$, and by (v), $u_x \not \in A_z$, a contradiction. Therefore, $z$ breaks $\{x, t\}$. Since $t \not \in A_z$, it follows that $x \in A_z$. 

Since $z \in C_{u_x} \cap (G \setminus (\beta \cup \Pre(L_i)))$, it follows that $z \in \R \setminus \beta$. Since $x$ and $z$ are adjacent, they are in the same connected component of $\gamma \setminus \beta$. Since $x \in L_i$ and $z \not \in \Pre(L_i)$, it follows that $\ell(z) > \ell(x)$. If $x$ and $z$ are not star twins, then since $x \in A_z$, it follows that $z \leq_A x$, and hence $\ell(z) < \ell(x)$, a contradiction. So $x$ and $z$ are star twins. Now, $t$ is adjacent to $z$ and $t \not \in A_z$, so $t \in C_z$. Since $z$ and $x$ are star twins, it follows that $t \in C_x$. But $t$ and $x$ are not adjacent, a contradiction. 
This proves~\eqref{eq:there_is_edge}.\\

Suppose $x \in A_y$. Then, either $x \leq_A y$ or $y \leq_A x$, so by Lemma \ref{lemma:xy_not_Li}, either $x \not \in L_i$ or $y \not \in L_i$, a contradiction. Therefore, $x \in B_y \cup C_y$, and by symmetry, $y \in B_x \cup C_x$. Let $u$ and $p_2$ be as in \eqref{eq:there_is_edge}. Since $p_r \in C_{u} \setminus \beta$ and $u \in X_2$, it follows that $p_r \in \R \setminus \beta$. Let $j$ be such that $p_r \in L_j$. Since $P^* \subseteq G \setminus (\beta \cup \Pre(L_i))$ and $P^* \cap L_i = \emptyset$, it follows that $j > i$. Let $x_1 \in X_1$ be such that $p_r \in A_{x_1}$. By Lemma \ref{lemma:key_lemma_paw_shield}, either $p_r \leq_A x$ or $p_r \leq_A y$. We may assume by symmetry that $p_r \leq_A x$. But by Lemma \ref{lemma:xy_not_Li}, it follows that $x \not \in \Pre(L_j)$, a contradiction.  
\end{proof}

\section{Even set separators and $(k, c, d, m)$-tame graphs}
\label{sec:lastsection}

In this section, we prove Theorem \ref{thm:pawfriendlyevenseparator}. First, we use the results of Section \ref{sec:even_set_separators} to prove that paw-friendly graphs with no bounded balanced separator have even set separators. 

\begin{lemma}
  Let $c \in [\frac{1}{2}, 1)$ and let $d, \delta$ be positive integers with $d \geq \delta + 3$. Let $G$ be a connected paw-friendly graph on $n$ vertices with maximum degree $\delta$, let $w: V(G) \to [0, 1]$ be a weight function on $V(G)$ with $w(G) = 1$, and suppose $G$ has no $d$-bounded $(w, c)$-balanced separator. Then, $G$ has a $(w, \delta^2+2, c, \delta + 1)$-even set separator $L $ and one can find an even set representation of $L$ in time $\mathcal{O}(n^3)$. In particular, $G$ is
    $(\delta^2+2,c,\delta+1,3)$-tame.
\label{lemma:first_final}
\end{lemma}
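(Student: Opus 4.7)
The plan is to assemble the even set separator from structures already developed. I would take
$L := (X_1, X_2, L_1, \hdots, L_{\delta^2})$, where $(X_1, X_2)$ is the bipartition of the star-free bag $\beta$ given by Lemma \ref{lemma:beta_bipartite} and $(L_1, \hdots, L_{\delta^2})$ is the even set representation of $\R \setminus \beta$ constructed in Section \ref{sec:even_sets_Rminusbeta}. The union of the members of $L$ is exactly $\R$, so the components of $G \setminus L$ coincide with the components of $G \setminus \R$.

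To check that $L$ is a $(\delta^2+2)$-iterated even set, I would invoke the work of Section \ref{sec:R_evensetseparator} directly. Lemma \ref{lemma:T1T2_even} gives that $X_1$ is an even set in $G$, and also that $X_2$ is even in $G$; since deleting vertices can only remove induced paths and therefore cannot turn an even pair into an odd one, $X_2$ remains an even set in $G \setminus X_1$. For each $1 \leq i \leq \delta^2$, any induced path in $G \setminus (X_1 \cup X_2 \cup L_1 \cup \hdots \cup L_{i-1})$ between two vertices of $L_i$ has its interior in $G \setminus (\beta \cup \Pre(L_i))$, so Lemma \ref{lemma:major_even_paths} shows that $L_i$ is an even set in the required subgraph.

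The component bounds reduce to showing that each connected component $D$ of $G \setminus \R$ satisfies $|N(D)| \leq \delta + 1$ and $w(D) \leq c$. By Lemma \ref{lemma:structure_of_R}(ii), $D \subseteq A_x$ for some $x \in \beta$. When $x = x_2 \in X_2$, the loose laminarity of $\S_2$ together with the fact that $\beta \subseteq B_{x_2} \cup C_{x_2}$ forces $A_{x_2} \cap \R = \emptyset$, so $D$ is actually a connected component of $A_{x_2}$, and hence $N(D) \subseteq C_{x_2} \subseteq N[x_2]$. When $x = x_1 \in X_1$, the proof of Lemma \ref{lemma:structure_of_R}(ii) places $D \subseteq \gamma \setminus \beta$, and since $\R \subseteq \gamma$ by Lemma \ref{lemma:structure_of_R}(i), we obtain $N_G(D) \subseteq \gamma$; within $\gamma$, the set $D$ is a connected component of $A(S_{x_1}) \cap \gamma$, whose neighborhood in $\gamma$ lies in $C(S_{x_1} \cap \gamma) \subseteq C_{x_1}$, so $N_G(D) \subseteq C_{x_1} \subseteq N[x_1]$. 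Either way, $|N(D)| \leq \delta + 1$, and Lemma \ref{lemma:skewed} gives $w(D) \leq w(A_x) < 1-c \leq c$.

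For the time bound, I would compute the canonical star separation $S_v$ for each $v \in V(G)$ by a BFS in $G \setminus N[v]$, at $\mathcal{O}(n^2)$ per vertex and $\mathcal{O}(n^3)$ overall; this is the dominant cost. The remaining steps --- reading off the relation $\leq_A$ and the $\smallO$-star cover $X$, bipartitioning $\beta$, assembling $\R$, and constructing the even set representation of $\R \setminus \beta$ from the orderings $\ell_{x_i}$ --- all fit within $\mathcal{O}(n^2)$ time. The ``in particular'' clause then records that this construction certifies $(\delta^2+2, c, \delta+1, 3)$-tameness for the given weight function. The main obstacle will be the neighborhood analysis in the case $x \in X_1$, which must pass through the intermediate bag $\gamma$ rather than reasoning in $G$ directly, because $A_{x_1}$ itself need not be disjoint from $\R$.
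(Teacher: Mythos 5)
Your proposal takes essentially the same route as the paper: you define $L = (X_1, X_2, L_1, \dots, L_{\delta^2})$, invoke Lemma~\ref{lemma:T1T2_even} and Lemma~\ref{lemma:major_even_paths} for the iterated-even-set property, invoke Lemma~\ref{lemma:structure_of_R} and Lemma~\ref{lemma:skewed} for the component bounds, and compute the star separations, $\leq_A$, $X$, $\beta$, $\R$, and the even set representation within $\mathcal{O}(n^3)$, exactly as the paper does. You supply a bit more detail in two spots the paper leaves implicit (why $X_2$ stays even after deleting $X_1$ — correct, since passing to an induced subgraph cannot create new induced paths — and the two-case breakdown of the $|N(D)| \le \delta+1$ bound). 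In the $x_1 \in X_1$ case you assert that $D$ is a connected component of $A_{x_1} \cap \gamma$, which is tantamount to claiming that no vertex of $A_{x_1} \cap C_{x_2}$ ($x_2 \in X_2$) is adjacent to $D$; you flag this as the delicate part but do not actually verify it. The paper's own proof is equally terse here — it deduces $N(D) \subseteq N[v]$ directly from Lemma~\ref{lemma:structure_of_R}, whose statement only gives $D \subseteq A_v$ — so you are not introducing a new gap, but be aware that the last link of your neighborhood argument is an assertion rather than a proof.
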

\begin{proof}
For every vertex $v \in V(G)$, one can compute the canonical star separation $S_v = (A_v, C_v, B_v)$ in linear time, so the canonical star separations for every vertex $v \in V(G)$ can be computed in time $\mathcal{O}(n^2 + nm)$. The partial order $\leq_A$ can be computed on $V(G)$ in time $\mathcal{O}(n^3)$. Let $X\subseteq V(G)$ be the star covering of $G$, i.e., the set of minimal vertices of $G$ with respect to $\leq_A$. Let $\S$ be the $\smallO$-star covering sequence of $G$ and let $\beta$ be the star-free bag of $G$. By Lemma \ref{lemma:dim_is_2}, $X$ is bipartite. The bipartition $(X_1, X_2)$ of $X$ can be obtained in linear time. Recall that $\beta = X$, and let $\R = \beta \cup (\bigcup_{x_2 \in X_2} C_{x_2})$. Observe that given $\S$, $\R$ can be computed in linear time.

A linear extension $\ell$ of $\leq_A$ can be computed in linear time by running a depth-first search algorithm on a representation of the partial order as a directed acyclic graph. The functions $\ell_{x_i}$ (as defined in Section \ref{sec:even_sets_Rminusbeta}) for all $x_i \in X_1$ can be obtained from the linear extension in linear time. Then, the sets $L_1, \hdots, L_{\delta^2}$ can be computed in linear time, and $\R \setminus \beta = L_1 \cup \hdots \cup L_{\delta^2}$. Let  $L = (X_1, X_2, L_1, \hdots, L_{\delta^2})$. By Lemma \ref{lemma:structure_of_R}, it follows that for every connected component $D$ of $G \setminus L$, $D \subseteq A_v$ for some $v \in \beta$ and $N(D) \subseteq N[v]$, and hence $|N(D)| \leq \delta + 1$. Furthermore, by Lemma \ref{lemma:skewed}, $w(A_v) \leq 1-c$ and so, since $c \geq \frac{1}{2}$, $w(D) \leq c$. By Lemma \ref{lemma:T1T2_even}, it follows that $X_1$ and $X_2$ are even sets in $G$, and by Lemma \ref{lemma:major_even_paths}, $(L_1, \hdots, L_{\delta^2})$ is a $\delta^2$-iterated even set in $G \setminus \beta$. Therefore, $(X_1, X_2, L_1, \hdots, L_{\delta^2})$ is a $(\delta^2 + 2)$-iterated even set in $G$, and hence $L$ is a $(w, \delta^2 + 2, c, \delta + 1)$-even set separator of $G$ which can be computed in time $\mathcal{O}(n^3)$. 
\end{proof}

Now we prove Theorem~\ref{thm:pawfriendlyevenseparator} that we restate.
\pawfriendlyevenseparator*
    \label{thm:bigthm_pawfriendly}
\begin{proof}
Let $w$ be a uniform weight function on $V(G)$, so $w(G) = 1$. For every $v \in V(G)$, we check whether every component $D$ of $G \setminus N^{\delta + 3}[v]$ has $w(D) \leq c$, which can be done in $\mathcal{O}(n^{2} + nm)$ time. Suppose there exists $v \in V(G)$ such that $w(D) \leq c$ for every component $D$ of $G \setminus N^{\delta + 3}[v]$ (so $G$ has a $(\delta + 3)$-bounded $(w, c)$-balanced separator). Let $X = N^{\delta + 3}[v]$, so $|X| \leq 1 + \delta + \hdots + \delta^{\delta + 3}$. Let $X = \{x_1, \hdots, x_{\ell}\}$. Then, $\{\{x_1\}, \hdots, \{x_{\ell}\}\}$ is a $(w, \ell, c, \ell)$-even set separator of $G$, found in time $\mathcal{O}(n^{3})$. 

Now, suppose $G$ does not have a $(\delta + 3)$-bounded $(w, c)$-balanced separator. By Lemma \ref{lemma:first_final}, $G$ has a $(w, \delta^2 + 2, c, \delta + 1)$-even set separator found in time $\mathcal{O}(n^3)$. Therefore, $G$ is $(1 + \delta + \hdots + \delta^{\delta + 3}, c, 1 + \delta + \hdots + \delta^{\delta + 3}, 3)$-tame. 
\end{proof}


\section{Acknowledgments}
The authors would like to thank Alexander Schrijver, Bruce Shepherd, and Richard Santiago Torres for helpful discussions about submodular functions.


\end{document}